\newtheoremstyle{myremark} 
    {\topsep}                    
    {\topsep}                    
    {\rm}                        
    {}                           
    {\bf}                        
    {.}                          
    {.5em}                       
    {}  
\theoremstyle{myremark}
\newtheorem{remark}{Remark}
\newtheorem{theorem}{Theorem}
\newtheorem{construction}{Construction}
\newtheorem{lemma}{Lemma} 
\newtheorem{corollary}{Corollary}
\newtheorem{condition}{Condition}
\newtheorem{definition}{Definition}
\newcommand{\poly}{{\mathsf{poly}}}
\newcommand{\norm}[1]{\left\lVert#1\right\rVert}
\def \bE {\mathbb{E}}
\def \bR {\mathbb{R}}
\def \EE {\mathbb{E}}
\begin{document}

\title{Bias Correction with Jackknife, Bootstrap, and Taylor Series}

\author{Jiantao~Jiao,~\IEEEmembership{Member,~IEEE},~Yanjun~Han,~\IEEEmembership{Student Member,~IEEE}
\thanks{Manuscript received Month 00, 0000; revised Month 00, 0000; accepted Month 00, 0000. Date of current version Month 00, 0000.
Copyright (c) 2019 IEEE. Personal use of this material is permitted. However, permission to use this material for any other purposes must be obtained from the IEEE by sending a request to pubs-permissions@ieee.org. The research of Jiantao Jiao was partially supported by NSF Grants IIS-1901252, and CCF-1909499.
	}
\thanks{Jiantao Jiao is with the Department of Electrical Engineering and Computer Sciences and the Department of Statistics, University of California, Berkeley, CA, USA. Email: jiantao@eecs.berkeley.edu.} 
\thanks{Yanjun Han is with the Department of Electrical Engineering, Stanford University, CA, USA. Email: \{yjhan\}@stanford.edu.} 
}

\date{\today}

\maketitle

\begin{abstract}
We analyze bias correction methods using jackknife, bootstrap, and Taylor series. We focus on the binomial model, and consider the problem of bias correction for estimating $f(p)$, where $f \in C[0,1]$ is arbitrary. We characterize the supremum norm of the bias of general jackknife and bootstrap estimators for any continuous functions, and demonstrate the in delete-$d$ jackknife, different values of $d$ may lead to drastically different behaviors in jackknife. We show that in the binomial model, iterating the bootstrap bias correction infinitely many times may lead to divergence of bias and variance, and demonstrate that the bias properties of the bootstrap bias corrected estimator after $r-1$ rounds are of the same order as that of the $r$-jackknife estimator if a bounded coefficients condition is satisfied. 
\end{abstract}

\begin{IEEEkeywords}
Bootstrap, Jackknife, Bias correction, Functional estimation, Approximation theory
\end{IEEEkeywords}

\section{Introduction}

One of the classic problems in statistics is to design procedures to reduce the bias of estimators. General bias correction methods such as the bootstrap, the jackknife, and the Taylor series have been widely employed and well studied in the literature. See~\cite{quenouille1956notes,adams1971asymptotic,  Miller1974jackknife, Efron1979bootstrap, efron1982jackknife, , Withers1987, hall1988bootstrap,hall1992bootstrap, Efron--Tibshirani1994introduction, politis1999springer, zoubir2004bootstrap,shao2012jackknife,  cordeiro2014introduction} and the references therein. The jackknife idea is also closely related to the ensemble method in estimation~\cite{kumar2013ensemble,moon2016improving,delattre2017kozachenko}.  

A close inspection of the literature on those general bias correction methods show that they usually rely on certain expansion (differentiability) properties of the expectation of the estimator one would like to correct the bias for, and the analysis is pointwise asymptotics~\cite{Withers1987,hall1992bootstrap, shao2012jackknife}. 

One motivation for this work is that the methods based on series expansions and differentiability assumptions may \emph{not} suffice in the analysis of bootstrap and jackknife even in the simplest statistical models, and the practical implementations of bootstrap and jackknife do not require those differentiability conditions. The Taylor series itself, by definition, is a series expansion method which we include here for comparison with bootstrap and jackknife. 

To illustrate our point, consider one of the simplest statistical models, the binomial model, where $n \cdot \hat{p}_n\sim \mathsf{B}(n, p)$. For any function $f :[0,1] \mapsto \mathbb{R}$, we would like to correct the bias of $f(\hat{p}_n)$ as an estimator of $f(p)$. Let $e_{1, n}(p) = f(p) - \mathbb{E}_p f(\hat{p}_n)$ be the bias term. The expectation of the jackknife bias corrected estimator $\hat{f}_2$ satisfies
\begin{align}\label{eqn.jackknifestandardbias}
\mathbb{E}[\hat{f}_2] & = \mathbb{E}\left[ nf(\hat{p}_n) - (n-1)f(\hat{p}_{n-1}) \right ], 
\end{align}
where $(n-1) \cdot \hat{p}_{n-1} \sim \mathsf{B}(n-1,p)$. The textbook argument of the bias reduction property of the jackknife is the following~\cite{shao2012jackknife}. Suppose that
\begin{align}\label{eqn.jackknifeassumption}
e_{1,n}(p) = \frac{a(p)}{n} + \frac{b(p)}{n^2} + O_p\left( \frac{1}{n^3} \right),
\end{align}
where $a(p),b(p)$ are unknown functions of $p$ which do no depend on $n$, and $O_p(a_n)$ is a sequence that is elementwise upper bounded by $a_n$ up to a multiplicative constant for fixed $p$. We also have
\begin{align}
e_{1,n-1}(p) = \frac{a(p)}{n-1} + \frac{b(p)}{(n-1)^2} + O_p\left( \frac{1}{(n-1)^3} \right).
\end{align}

Hence, the overall bias of $\hat{f}_2$ is:
\begin{align}
f(p) - \mathbb{E}_p \hat{f}_2 & = ne_{1,n}(p) - (n-1)e_{1,n-1}(p) \\
& = \frac{b(p)}{n} - \frac{b(p)}{n-1} + O_p\left( \frac{1}{n^2} \right) \\
& = -\frac{b(p)}{n(n-1)} + O_p\left( \frac{1}{n^2} \right),\label{eqn.introductionf2reducedbias}
\end{align}
which seems to suggest that the bias has been reduced to order $\frac{1}{n^2}$ instead of order $\frac{1}{n}$. However, if we particularize~(\ref{eqn.jackknifeassumption}) to $f(p) = p\ln (1/p)$, which relates to the Shannon entropy~\cite{Shannon1948}, we have~\cite{Harris1975}
\begin{align}\label{eqn.entropyexpansion}
e_{1,n}(p) & = \frac{1-p}{2n} + \frac{1}{12 n^2}\left( \frac{1}{p} - p \right) + O_p \left( \frac{1}{n^3} \right). 
\end{align}
One immediately sees that it may not be reasonable to claim that the jackknife has reduced the bias upon looking at~(\ref{eqn.entropyexpansion}) and~(\ref{eqn.introductionf2reducedbias}). Indeed, the bias of the jackknife estimator is uniformly upper bounded by $O(n)$, but the right hand side of~(\ref{eqn.entropyexpansion}) and~(\ref{eqn.introductionf2reducedbias}) explodes to infinity as $p\to 0$. It shows that one cannot ignore the dependence on $p$ in the $O_p(\cdot)$ notation, but even doing higher order of Taylor expansion does not help. In fact, it was shown first in~\cite{Paninski2003} that for $f(p) = p\ln(1/p)$, there exist universal constants $C_1>0,C_2>0$ such that for any $n\geq 1$,
\begin{align}
\sup_{p\in [0,1]} | e_{1,n}(p) | & \leq \frac{C_1}{n} \\
\sup_{p\in [0,1]} | f(p) - \mathbb{E}_p[\hat{f}_2]| & \geq \frac{C_2}{n}. 
\end{align}
In other words, the jackknife does not change the bias order at all. There exist other estimators that achieve a smaller order of bias. Indeed, the estimators~\cite{Valiant--Valiant2011power, Valiant--Valiant2013estimating,Wu--Yang2014minimax, Jiao--Venkat--Han--Weissman2015minimax} that achieve the optimal minimax sample complexity for Shannon entropy estimation used best approximation polynomials to reduce the bias of of each symbol from $\frac{1}{n}$ to $\frac{1}{n\ln n}$. 

In this paper, we connect the jackknife and bootstrap to the theory of approximation, and provide a systematic treatment of the problem of correcting the bias for $f(\hat{p}_n)$ as an estimator of $f(p)$ for $f\in C[0,1]$ and $n\cdot \hat{p}_n \sim \mathsf{B}(n,p)$. Compared with existing literature, we choose to simplify the statistical model to the extreme, but consider arbitrary functions $f$. We believe it is an angle worth investigating due to the following reasons. First of all, it directly leads to analysis of the bias correction properties of jackknife and bootstrap for important statistical questions such as the Shannon entropy estimation, which existing theory proves insufficient of handling. Second, even in this simplest statistical model the analysis of jackknife and bootstrap is far non-trivial, and there still exist abundant open problems as we discuss in this paper. One message we would like to convey in this work is that the analysis of jackknife and bootstrap in simple statistical models but general functions could lead to interesting and deep mathematical phenomena that remain fertile ground for research. We mention that most of the results in this paper could be generalized to the case of natural exponential family of quadratic variance functions~\cite{morris1982natural, morris1983natural}, which comprises of six families: Gaussian, Poisson, binomial, negative binomial, gamma, and generalized hyperbolic secant. Coincidentally, these distribution families were also identified as special in approximation theory literature, where they were named operators of the exponential-type~\cite{may1976saturation,ismail1978family}. 

We introduce some notations below. The $r$-th symmetric difference of a function $f: [0,1] \mapsto \mathbb{R}$ is given by 
\begin{align} \label{eqn.rsymmetricdifferencedef}
\Delta^r_h f(x) = \sum_{k = 0}^r (-1)^k {r \choose k} f(x + r(h/2) - kh),
\end{align}
where $\Delta^r_h f(x) = 0$ if $x + rh/2$ or $x-rh/2$ is not in $[0,1]$. We introduce the $r$-th Ditzian--Totik modulus of smoothness of a function $f :[0,1] \mapsto \mathbb{R}$ as
\begin{align}\label{eqn.dtmodulusdefinition}
\omega_{\varphi}^r(f,t) = \sup_{0<h\leq t} \norm{ \Delta^r_{h\varphi(x)} f },
\end{align}
where $\varphi(x) = \sqrt{x(1-x)}$, and the norm is the supremum norm. 

The $\omega_\varphi^r(f,t)$ modulus satisfies the following properties. 
\begin{lemma}~\cite[Chap. 4]{Ditzian--Totik1987} \label{lemma.dtmodulusproperties}
The Ditzian--Totik modulus of smoothness $\omega_\varphi^r(f,t)$ in~(\ref{eqn.dtmodulusdefinition}) satisfies the following:
\begin{enumerate}
\item $\omega_\varphi^r(f,t)$ is a nondecreasing function of $t$.
\item There exist universal constants $K>0,t_0>0$ such that $\omega_\varphi^r(f, \lambda t) \leq K \lambda^r \omega_\varphi^r(f,t)$ for $\lambda\geq 1$, and $\lambda t \leq t_0$. 
\item There exist universal constants $K>0,t_0>0$ such that $\omega_\varphi^{r+1}(f,t) \leq K \omega_\varphi^r(f,t)$ for $0<t\leq t_0$. 
\item There exists a universal constant $K>0$ such that $\omega_\varphi^{r}(f,t) \leq K \sup_{x\in [0,1]} |f^{(r)}(x)| t^r$. 
\item $\lim_{t\to 0^+} \frac{\omega_\varphi^r(f,t)}{t^r} = 0 \Rightarrow f$ is a polynomial with degree $r-1$. ($f$ is a polynomial of degree $r-1\Rightarrow \omega_\varphi^r(f,t) = 0$). 
\item $\omega_\varphi^r(f,t) = O(t^r)$ for fixed $f,r$ if and only if $f^{(r-1)} \in \text{A.C.}_{\text{loc}}$ and $\| \varphi^r f^{(r)}\| <\infty$,
\end{enumerate}
where $f^{(r-1)} \in \text{A.C.}_{\text{loc}}$ means that $f$ is $r-1$ times differentiable and $f^{(r-1)}$ is absolutely continuous in every closed finite interval $[c,d] \subset (0,1)$.  
\end{lemma}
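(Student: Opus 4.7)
The plan is to invoke the classical Ditzian--Totik theory from Chapter 4 of their monograph, and to route every item through one central object: the K-functional
\begin{equation*}
K_{r,\varphi}(f, t^r) := \inf_g \left\{ \norm{f - g} + t^r \norm{\varphi^r g^{(r)}} \right\},
\end{equation*}
where $g$ ranges over functions with $g^{(r-1)} \in \text{A.C.}_{\text{loc}}$. The main technical result to establish is the equivalence $c\, K_{r,\varphi}(f, t^r) \leq \omega_\varphi^r(f, t) \leq C\, K_{r,\varphi}(f, t^r)$ for small $t$, with universal constants $c,C > 0$. Once in hand, the six listed properties fall out as corollaries.

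Properties 1, 3, and 4 are the short ones. Property 1 follows immediately since $\omega_\varphi^r(f,t)$ is a supremum over $0 < h \leq t$. For property 4, I would use the $r$-fold integral representation
\begin{equation*}
\Delta_h^r f(x) = \int_{-h/2}^{h/2} \cdots \int_{-h/2}^{h/2} f^{(r)}\!\left( x + \sum_{i=1}^r u_i \right) du_1 \cdots du_r,
\end{equation*}
which gives $|\Delta_{h\varphi(x)}^r f(x)| \leq (h\varphi(x))^r \norm{f^{(r)}} \leq h^r \norm{f^{(r)}}$ since $\varphi \leq 1$. Property 3 comes from the trivial comparison $K_{r+1,\varphi}(f, t^{r+1}) \leq K_{r,\varphi}(f, t^r)$ (take the near-minimizer $g$ for the $r$-functional and smooth it against a mollifier adapted to $\varphi$, which is bounded near a polynomial of fixed degree), then apply the equivalence.

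Properties 2, 5, and 6 are the rigidity and scaling statements. For property 2, I would prove a Marchaud-type inequality for $K_{r,\varphi}(\cdot,t^r)$: scaling the inner parameter $t$ to $\lambda t$ blows up the $t^r$ factor by $\lambda^r$, while the approximation term is controlled by a telescoping sum argument, giving $K_{r,\varphi}(f,(\lambda t)^r) \leq K' \lambda^r K_{r,\varphi}(f, t^r)$. Properties 5 and 6 are the harder ones: I would combine the weighted Jackson inequality $E_n(f) \leq C \omega_\varphi^r(f, 1/n)$ with the weighted Bernstein converse $\omega_\varphi^r(f, 1/n) \leq C n^{-r} \sum_{k=1}^n k^{r-1} E_k(f)$, where $E_n(f)$ denotes the best polynomial approximation error of degree $n$. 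Property 5 then reduces to showing that $E_n(f) = o(n^{-r})$ forces $f$ to be a polynomial of degree $r-1$, which follows from a telescoping argument on de~la~Vall\'ee Poussin partial sums. Property 6 is a Hardy--Littlewood-type characterization: the backward direction is property 4, while the forward direction extracts the weighted derivative $\varphi^r f^{(r)} \in L^\infty$ from a bounded sequence of smooth near-best approximants via a weak-$*$ compactness argument.

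The main obstacle is the K-functional equivalence itself, and in particular constructing an appropriate mollifier adapted to the degenerate weight $\varphi(x) = \sqrt{x(1-x)}$ near the endpoints $\{0,1\}$. This requires weighted Hardy inequalities and a partition-of-unity argument that treats the interior and the two boundary layers $[0, 1/n^2]$ and $[1-1/n^2, 1]$ separately. Once that equivalence is proved, the rest of the lemma is essentially bookkeeping inside the Jackson--Bernstein machinery for weighted polynomial approximation.
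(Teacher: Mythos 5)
First, note that the paper does not prove this lemma at all: it is quoted verbatim from Chapter~4 of the Ditzian--Totik monograph, so there is no in-paper argument to compare against. Your plan --- route everything through the equivalence $\omega_\varphi^r(f,t)\asymp K_{r,\varphi}(f,t^r)$ and the weighted Jackson/Bernstein inequalities --- is exactly the machinery the cited source uses, and properties 1--4 and 6 are handled correctly in outline (property 2 is in fact immediate from the homogeneity of the $t^r$ term in the $K$-functional, so no Marchaud-type telescoping is needed there; the weak-$*$ extraction for property 6 is the standard argument). The proposal is of course a roadmap rather than a proof, since the central equivalence is acknowledged but not established, which is acceptable for a result of this kind.

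There is, however, one genuinely broken step: the reduction of property 5 to the claim that $E_n(f)=o(n^{-r})$ forces $f\in\poly_{r-1}$. That claim is false --- any polynomial of degree $\geq r$ satisfies $E_n(f)=0$ for all large $n$, hence $E_n(f)=o(n^{-r})$, yet is not of degree $r-1$; and indeed such an $f$ has $\omega_\varphi^r(f,t)\asymp t^r$, not $o(t^r)$. The decay of the best-approximation errors simply cannot detect the difference between $O(t^r)$ and $o(t^r)$ for the modulus, so no telescoping of de~la~Vall\'ee~Poussin sums can rescue this route. The correct rigidity argument works with the modulus or $K$-functional directly: from $\omega_\varphi^r(f,t)=o(t^r)$ one first gets (via property 6) that $\varphi^r f^{(r)}\in L^\infty$, and then the $o(t^r)$ hypothesis forces $\Delta^r_{h\varphi(x)}f(x)/(h\varphi(x))^r\to 0$, whence $f^{(r)}=0$ a.e.\ on $(0,1)$ and $f\in\poly_{r-1}$. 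You should replace the $E_n$-based reduction with an argument of this type.
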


We emphasize that Ditzian--Totik modulus of smoothness is easy to compute for various functions. For example, for $f(x) = x^\delta | \ln x/2|^\gamma, x\in (0,1)$. Then for $r\geq 2\delta$, we have~\cite[Section 3.4]{Ditzian--Totik1987}:
\begin{align}\label{eqn.dtmoduluscomputationexample}
\omega_\varphi^r(f,t) \asymp_{r,\delta,\gamma} \begin{cases} t^{2\delta} |\ln t|^\gamma & \delta \notin \mathbb{Z},\delta\geq 0, \gamma\geq 0 \\ t^{2\delta} |\ln t|^{\gamma-1} & \delta \in \mathbb{Z},  \delta\geq 0,\gamma \geq 1\end{cases}. 
\end{align}

An intuitive understanding of $\omega_\varphi^r(f,t)$ is the following. If the function $f$ is ``smoother'', the modulus is smaller. However, a non-zero $\omega_\varphi^r(f,t)$ cannot vanish faster than the order $t^r$ for any fixed $f$. 

\emph{Notation:} All the norms in this paper refer to the supremum norm. Concretely $\| f \| = \sup_x |f(x)|$. For non-negative sequences $a_\gamma,b_\gamma$, we use the notation $a_\gamma \lesssim_{\alpha}  b_\gamma$ to denote that there exists a universal constant $C$ that only depends on $\alpha$ such that $\sup_{\gamma } \frac{a_\gamma}{b_\gamma} \leq C$, and $a_\gamma \gtrsim_\alpha b_\gamma$ is equivalent to $b_\gamma \lesssim_\alpha a_\gamma$. Notation $a_\gamma \asymp_\alpha b_\gamma$ is equivalent to $a_\gamma \lesssim_\alpha  b_\gamma$ and $b_\gamma \lesssim_\alpha a_\gamma$. We write $a_\gamma \lesssim b_\gamma$ if the constant is universal and does not depend on any parameters. Notation $a_\gamma \gg b_\gamma$ means that $\liminf_\gamma \frac{a_\gamma}{b_\gamma} = \infty$, and $a_\gamma \ll b_\gamma$ is equivalent to $b_\gamma \gg a_\gamma$. We write $a\wedge b=\min\{a,b\}$ and $a\vee b=\max\{a,b\}$. Moreover, $\poly_n^d$ denotes the set of all $d$-variate polynomials of degree of each variable no more than $n$, and $E_n[f;I]$ denotes the distance of the function $f$ to the space $\poly_n^d$ in the uniform norm $\|\cdot\|_{\infty,I}$ on $I\subset \bR^d$. The space $\poly_n^1$ is also abbreviated as $\poly_n$. All logarithms are in the natural base. The notation $\mathbb{E}_\theta[X]$ denotes the mathematical expectation of the random variable $X$ whose distribution is indexed by the parameter $\theta$. The $s$-backward difference of a function defined over integers $G_n$ is 
\begin{align} \label{eqn.backwardifferencedefine}
\Delta^s G_n \triangleq \sum_{k = 0}^s (-1)^k \binom{s}{k} G_{n-k}. 
\end{align}

\begin{remark}[Operator view of bias reduction]
It was elaborated in~\cite{Jiao--Venkat--Han--Weissman2014maximum} that for any statistical model, the quantity $\mathbb{E}_\theta F(\hat{\theta})$ could be viewed as an operator that maps the function $F(\cdot)$ to another function of $\theta$. The operator is obviously linear in $F$, and is also positive in the sense that if $F\geq 0$ everywhere it is also everywhere non-negative. If we view $\mathbb{E}_\theta F(\hat{\theta})$ as an approximation of $F(\theta)$, then analyzing the bias of the estimator $F(\hat{\theta})$ is equivalent to analyzing the approximation error of $\mathbb{E}_\theta F(\hat{\theta})$. 

Casting the bias analysis problem as an approximation problem, a key observation of this work is that the bootstrap bias correction could be viewed as the \emph{iterated Boolean sum} approximation, and the jackknife bias correction could be viewed as a \emph{linear combination} approximation, and the Taylor series bias correction corresponds to the Taylor series approximation. The main tool we use to handle these approximation theoretic questions is the $K$-functional, which we introduce in Appendix~\ref{sec.kfunctional}.  
\end{remark}

We now summarize our main results for jackknife, bootstrap, and Taylor series bias correction. 

\subsection{Jackknife bias correction}

The jackknife is a subsampling technique~\cite{shao2012jackknife} that aims at making the biases of estimators with different sample sizes cancel each other. Before we introduce the jackknife estimator, we recall the definition of $U$-statistic. 

\begin{definition}[$U$-statistic]
Let $g: \mathbb{R}^r \mapsto \mathbb{R}$ be a real-valued function of $r$ variables. For each $n\geq r$ the associated $U$-statistic $\mathbb{U}_n[g]: \mathbb{R}^n \mapsto \mathbb{R}$ is defined as
\begin{align}
\mathbb{U}_n[g](X_1,X_2,\ldots,X_n) = \frac{\sum_\beta g(X_{\beta_1}, X_{\beta_2},\ldots, X_{\beta_r}) }{{n \choose r}},
\end{align}
which is the average over ordered samples $(\beta_1,\beta_2,\ldots,\beta_r)$ of size $r$ of the sample values $g(X_{\beta_1}, X_{\beta_2},\ldots, X_{\beta_r})$. 
\end{definition}

\begin{definition}[$r$-jackknife estimator]\label{def.generalrjackknifeestimatordefinition}
Fix $r\geq 1, r\in \mathbb{Z}$. Fix $K>0$ such that $K$ does not scale with $n$. For a given function $f\in C[0,1]$ and any positive integer $m$, define function $g_m: \mathbb{R}^m \mapsto \mathbb{R}$ as
\begin{align}
g_m(x_1,x_2,\ldots,x_m) \triangleq f\left( \frac{\sum_{i = 1}^m x_i}{m} \right).
\end{align}

For a collection of sample sizes $n_1<n_2<n_3<\ldots < n_r \leq K n_1 = n$, under the binomial model the general $r$-jackknife estimator is defined as
\begin{align}\label{eqn.generalrjackknifedef}
\hat{f}_r = \sum_{i = 1}^r C_i \mathbb{U}_n[g_m](X_1,X_2,\ldots,X_n),
\end{align}
where $X_1,X_2,\ldots,X_n \stackrel{\text{i.i.d.}}{\sim} \mathsf{Bern}(p)$. The coefficients $\{C_i\}$ are given by
\begin{align}\label{eqn.ciexpressions}
C_i = \prod_{j\neq i} \frac{n_i}{n_i - n_j}, 1\leq i \leq r.
\end{align}

If $n_r = n, n_i - n_{i-1} = d$, then it is called the delete-$d$ $r$-jackknife estimator. 
\end{definition}

Note that the standard jackknife in~(\ref{eqn.jackknifestandardbias}) corresponds to $n_1 = n-1, n_2 = n$, whose corresponding coefficients are $C_1 = -(n-1), C_2 = n$. As shown in Lemma~\ref{lemma.higherpower} in Appendix~\ref{sec.auxlemmas}, the coefficients $\{C_i\}_{1\leq i\leq r}$ in~(\ref{eqn.ciexpressions}) satisfy the following: 
\begin{align}
\sum_{i = 1}^r C_i  &= 1 \nonumber \\
\sum_{i = 1}^r \frac{C_i}{n_i^{\rho}} &= 0,\quad 1\leq \rho \leq r-1, \rho \in \mathbb{Z}.  \label{eqn.vonderm}
\end{align}

The intuition behind this condition is clear: only through these equations can one completely cancel any bias terms of order $1/n^\rho, \rho \leq r-1$. It is also clear that (\ref{eqn.vonderm}) corresponds to solving a linear system with the Vandermonde matrix, for which the solution given in~(\ref{eqn.ciexpressions}) is the unique solution due to the fact that all the $n_i$'s are distinct. The rationale above also appeared in~\cite{schucany1971bias}, and the corresponding coefficients $C_i$ were given in the form of determinants. Equation~(\ref{eqn.ciexpressions}) shows that in this special case the coefficients admit a simple expression. 

\subsubsection{Jackknife with the bounded coefficients condition}

We introduce the following condition on $\{C_i\}_{1\leq i\leq r}$ which turns out to be crucial for the bias and variance properties of the general $r$-jackknife. 

\begin{condition}[Bounded coefficients condition]\label{condi.boundedcoeffcondition}
We say that the jackknife coefficients $C_i$ in~(\ref{eqn.ciexpressions}) satisfy the \emph{bounded coefficients condition} with parameter $C$ if there exists a constant $C$ that only depends on $r$ such that
\begin{align}
\sum_{i = 1}^r |C_i| \leq C. 
\end{align}
\end{condition}

One motivation for Condition~\ref{condi.boundedcoeffcondition} is the following. Observe that
\begin{align} \label{eqn.expectationjackknifegeneral}
\mathbb{E}_p \hat{f}_r = \sum_{i = 1}^r C_i \mathbb{E}_p [f(\hat{p}_{n_i})]. 
\end{align}

Viewing $\mathbb{E}_p[f(\hat{p}_{n_i})]$ as an operator that maps $f$ to a polynomial, it is an approximation to $f(p)$, which is the \emph{Bernstein} polynomial~\cite[Chapter 10]{devore1993constructive}. It follows from the Bernstein theorem~\cite[Chap. 7]{alon2004probabilistic} that $\lim_{n\to \infty}\| \mathbb{E}_p f(\hat{p}_{n}) - f(p) \|_\infty = 0$ for any continuous function $f$ on $[0,1]$. Then, one can view the $r$-jackknife as a linear combination of operators. In this sense, Condition~\ref{condi.boundedcoeffcondition} assures that the linear combination as a new operator has bounded norm that is independent of $n$. 

The following theorem quantifies the performance of the general $r$-jackknife under the bounded coefficients condition in Condition~\ref{condi.boundedcoeffcondition}. 

\begin{theorem}\label{thm.jackknife}
Suppose $\{C_i\}_{1\leq i\leq r}$ satisfies Condition~\ref{condi.boundedcoeffcondition} with parameter $C$. Suppose $r\geq 1$ is a fixed integer. Let $\hat{f}_r$ denote the general $r$-jackknife in~(\ref{eqn.generalrjackknifedef}). Then, for any $f\in C[0,1]$, the following is true. 
\begin{enumerate}
\item 
\begin{align}
\| f(p) - \mathbb{E}_p \hat{f}_r \| \lesssim_{r,C}  \omega_\varphi^{2r}(f, 1/\sqrt{n}) + n^{-r} \| f \| 
\end{align}
\item Fixing $0<\alpha <2r$, 
\begin{align}
& \| f(p) - \mathbb{E}_p \hat{f}_r \|  \lesssim_{\alpha,r, C} n^{-\alpha/2} \nonumber \\ 
&\quad \Leftrightarrow  \omega_\varphi^{2r}(f, t) \lesssim_{\alpha,r,C} t^\alpha.
\end{align}
\item Suppose there is a constant $D<2^{2r}$ for which
\begin{align}
\omega_\varphi^{2r}(f, 2t) \leq D \omega_\varphi^{2r}(f,t)\quad \text{for }t\leq t_0.
\end{align}
Then,
\begin{align}
\| f(p) - \mathbb{E}_p \hat{f}_r \|  \asymp_{r, C, D} \omega_\varphi^{2r}(f, 1/\sqrt{n}). 
\end{align}
\item For $r = 1$, 
\begin{align}
\| f(p) - \mathbb{E}_p \hat{f}_r \|  \asymp \omega_\varphi^{2r}(f, 1/\sqrt{n}). 
\end{align}
\end{enumerate}
\end{theorem}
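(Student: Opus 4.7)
The plan is to exploit the identity $\mathbb{E}_p[\mathbb{U}_n[f(\hat{p}_{n_i})]] = \mathbb{E}_p f(\hat{p}_{n_i}) = (B_{n_i}f)(p)$, where $B_{n_i}$ denotes the classical Bernstein polynomial operator, so that the bias to be controlled becomes the deterministic approximation error
\[
f(p) - \mathbb{E}_p \hat{f}_r \;=\; f(p) - \sum_{i=1}^r C_i (B_{n_i} f)(p),
\]
and the theorem reduces to a statement about the combined operator $L_r := \sum_{i=1}^r C_i B_{n_i}$. The main tool will be the Ditzian--Totik K-functional
\[
K_{2r,\varphi}(f, t^{2r}) \;=\; \inf_g \bigl\{\|f-g\| + t^{2r}\|\varphi^{2r} g^{(2r)}\|\bigr\},
\]
which is equivalent to $\omega_\varphi^{2r}(f,t)$.

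For part~1, the crucial technical step is a higher-order Voronovskaya-type estimate for the combined operator: for any sufficiently smooth $g$,
\[
\|L_r g - g\| \;\lesssim_{r,C}\; n^{-r}\bigl(\|\varphi^{2r} g^{(2r)}\| + \|g\|\bigr).
\]
The idea is that each $B_{n_i}g - g$ admits an asymptotic expansion in powers of $1/n_i$ whose coefficients are (weighted) derivatives of $g$, and the Vandermonde identities $\sum_i C_i = 1$ together with $\sum_i C_i/n_i^\rho = 0$ for $1 \le \rho \le r-1$ in~(\ref{eqn.vonderm}) annihilate the first $r-1$ terms of that expansion; Condition~\ref{condi.boundedcoeffcondition} is then used to bound the sum of the leftover remainders uniformly in $p \in [0,1]$. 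Choosing a near-optimizer $g$ of the K-functional at scale $t=1/\sqrt{n}$, splitting $f = g + (f-g)$, and using $\|L_r h\| \le (C+1)\|h\|$ on $f-g$ yields
\[
\|f - L_r f\| \;\le\; (C+1)\|f-g\| + \|L_r g - g\| \;\lesssim_{r,C}\; \omega_\varphi^{2r}(f, 1/\sqrt{n}) + n^{-r}\|f\|.
\]

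For parts~2 and~3, the forward implications are immediate corollaries of part~1 combined with the monotonicity and growth properties of $\omega_\varphi^{2r}$ in Lemma~\ref{lemma.dtmodulusproperties}; in particular, $\alpha < 2r$ forces $n^{-r}\|f\|$ to be absorbed into $n^{-\alpha/2}$. The converse implications are inverse-type results requiring one to pass from smallness of $\|f - L_r f\|$ back to smallness of $\omega_\varphi^{2r}(f,t)$. I will obtain these by proving a Bernstein-type derivative inequality $\|\varphi^{2r}(L_r f)^{(2r)}\| \lesssim_{r,C} n^r \|f\|$ for the combined operator and then invoking the Ditzian--Totik realization of $K_{2r,\varphi}$, which converts direct plus Bernstein estimates into a two-sided equivalence; the sub-doubling hypothesis of part~3 converts the characterization into a genuine asymptotic via a Berens--Lorentz-style iteration, and part~4 specializes to the classical Totik equivalence $\|B_nf - f\| \asymp \omega_\varphi^2(f, 1/\sqrt{n})$, since for $r=1$ one has $\hat{f}_1 = f(\hat{p}_n)$ and $\mathbb{E}_p \hat{f}_1 = B_n f$.

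The main obstacle is the higher-order Voronovskaya expansion together with the companion Bernstein inequality for $L_r$. For a single Bernstein operator these estimates are classical, but here one needs the explicit $n^{-r}$ rate, which requires carefully tracking the cancellations in a $2r$-th order Taylor expansion of $f(k/n_i) - f(p)$ inside the Bernstein representation, using the Vandermonde identities to kill all moments of order $<2r$, and bounding the surviving remainder by $\|\varphi^{2r} g^{(2r)}\|$ uniformly in $p$. The bounded coefficients condition enters at the final step of summing these remainders across $i=1,\dots,r$, and without it the norm of $L_r$ itself could grow with $n$, destroying both the direct and inverse estimates.
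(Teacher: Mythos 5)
Your proposal is correct and follows essentially the same route as the paper: the paper's proof consists of recognizing $\mathbb{E}_p\hat f_r=\sum_i C_i B_{n_i}f$ as a linear combination of Bernstein operators and then citing the Ditzian--Totik theory of such combinations (their Theorem 9.3.2 for parts 1--2, Corollary 9.3.8 for part 3, and Totik's strong converse for part 4), and your sketch reconstructs precisely the ingredients of those cited results --- the $K_{2r,\varphi}$ splitting, the Voronovskaya-type estimate $\|L_rg-g\|\lesssim n^{-r}(\|\varphi^{2r}g^{(2r)}\|+\|g\|)$ driven by the Vandermonde cancellations, the companion Bernstein inequality, and the Berens--Lorentz iteration under the sub-doubling hypothesis. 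The only difference is that you propose to prove these combination estimates from scratch rather than quote them, and you correctly flag the weighted Voronovskaya bound as the genuinely delicate step.
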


The following corollary of Theorem~\ref{thm.jackknife} is immediate given~(\ref{eqn.dtmoduluscomputationexample}). 
\begin{corollary}\label{cor.goodjackknifefailentropy}
Under the conditions in Theorem~\ref{thm.jackknife}, if $f = -p\ln p$, then
\begin{align}
\| f(p) - \mathbb{E}_p \hat{f}_r \| \asymp_{r,C} \frac{1}{n}. 
\end{align}
If $f(p) = p^\alpha, 0<\alpha<1$, then,
\begin{align}
\| f(p) - \mathbb{E}_p \hat{f}_r \| \asymp_{r,C} \frac{1}{n^\alpha}. 
\end{align}
\end{corollary}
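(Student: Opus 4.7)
The plan is to invoke Theorem~\ref{thm.jackknife} after computing the Ditzian--Totik modulus $\omega_\varphi^{2r}(f,t)$ for each function via equation~(\ref{eqn.dtmoduluscomputationexample}).

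First, I compute the moduli. For $f(p)=p^\alpha$ with $0<\alpha<1$, equation~(\ref{eqn.dtmoduluscomputationexample}) applies with $\delta=\alpha\notin\mathbb{Z}$ and $\gamma=0$; the side condition $2r\geq 2\delta$ holds since $r\geq 1>\alpha$, so $\omega_\varphi^{2r}(p^\alpha,t)\asymp_{r,\alpha} t^{2\alpha}$. For $f(p)=-p\ln p$, I write $-p\ln p=p|\ln(p/2)|-p\ln 2$ (using $\ln(p/2)<0$ on $(0,1)$) and observe that $p\ln 2$ is a polynomial of degree one, hence annihilated by $\Delta_h^{2r}$ for every $r\geq 1$. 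Thus $\omega_\varphi^{2r}(-p\ln p,t)=\omega_\varphi^{2r}(p|\ln(p/2)|,t)$, and~(\ref{eqn.dtmoduluscomputationexample}) with $\delta=1\in\mathbb{Z}$, $\gamma=1$ yields $\omega_\varphi^{2r}(-p\ln p,t)\asymp_r t^{2}|\ln t|^{\gamma-1}=t^2$.

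For the upper bound, part~1 of Theorem~\ref{thm.jackknife} gives
\begin{align*}
\|f(p)-\mathbb{E}_p\hat{f}_r\|\lesssim_{r,C}\omega_\varphi^{2r}(f,1/\sqrt{n})+n^{-r}\|f\|,
\end{align*}
which simplifies to $\lesssim_{r,C}1/n$ in the entropy case and $\lesssim_{r,C,\alpha}n^{-\alpha}$ in the power case; the $n^{-r}$ remainder is dominated since $r\geq 1\geq\alpha$.

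For the matching lower bound I split on $r$. When $r=1$, part~4 supplies the unconditional equivalence $\|f-\mathbb{E}_p\hat{f}_1\|\asymp \omega_\varphi^{2}(f,1/\sqrt{n})$, which is exactly the claimed order for both functions. When $r\geq 2$, I apply part~3: the computed moduli satisfy $\omega_\varphi^{2r}(f,t)\asymp_r t^\beta$ with $\beta\in\{2,2\alpha\}$ strictly less than $2r$, so the doubling ratio $\omega_\varphi^{2r}(f,2t)/\omega_\varphi^{2r}(f,t)$ is bounded by a constant $D<2^{2r}$, and part~3 delivers $\|f-\mathbb{E}_p\hat{f}_r\|\asymp \omega_\varphi^{2r}(f,1/\sqrt{n})$. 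The only genuinely technical point is verifying $D<2^{2r}$ in this doubling condition: since~(\ref{eqn.dtmoduluscomputationexample}) is a two-sided estimate whose ratio $C_{r,\alpha}/c_{r,\alpha}$ of upper to lower constants is a fixed finite quantity, and since the strict gap $\beta<2r$ leaves the slack $2^{2r-\beta}>1$ to absorb that ratio, the inequality $D<2^{2r}$ can indeed be satisfied. I expect this to be routine, and it is the step I would be most careful about when writing up the full argument.
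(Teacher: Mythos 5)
Your route is the same as the paper's: the paper treats this corollary as immediate from Theorem~\ref{thm.jackknife} together with the modulus computation (\ref{eqn.dtmoduluscomputationexample}), and your modulus calculations (including the reduction of $-p\ln p$ to $p|\ln(p/2)|$ modulo an affine term annihilated by $\Delta^{2r}_h$), the upper bound via part 1, and the use of part 4 for $r=1$ are all exactly right. The one step you flagged is indeed the only delicate one, and your justification of it does not work as written: from $c\,t^\beta\le \omega_\varphi^{2r}(f,t)\le C\,t^\beta$ one only gets $\omega_\varphi^{2r}(f,2t)\le (C/c)\,2^\beta\,\omega_\varphi^{2r}(f,t)$, and the ratio $C/c$ of the implied constants in (\ref{eqn.dtmoduluscomputationexample}) is a fixed finite number that need not be smaller than $2^{2r-\beta}$ --- a slack factor strictly greater than $1$ does not absorb an arbitrary fixed constant. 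The standard repair is to dilate by $2^q$ instead of $2$: one has $\omega_\varphi^{2r}(f,2^qt)\le (C/c)\,2^{q\beta}\,\omega_\varphi^{2r}(f,t)<2^{2rq}\,\omega_\varphi^{2r}(f,t)$ once $q$ is large enough that $C/c<2^{q(2r-\beta)}$, and this $q$-fold doubling is all the inverse-theorem argument behind part 3 actually uses (compare the proof of part 4 of Theorem~\ref{thm.bootstrapafew}, where $q$ is chosen so that $D^{-q}>2M_1 2^{-2qm}$). With that substitution your lower bound for $r\ge 2$ is complete; the paper itself does not spell this point out either.
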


Corollary~\ref{cor.goodjackknifefailentropy} implies that the $r$-jackknife estimator for fixed $r$ does not improve the bias of $f(\hat{p}_n)$ for $f(p) = p\ln (1/p)$, which makes it incapable of achieving the minimax rates of Shannon entropy estimation~\cite{Wu--Yang2014minimax, Jiao--Venkat--Han--Weissman2015minimax}. 

\subsubsection{Jackknife without the bounded coefficients condition}

Theorem~\ref{thm.jackknife} has excluded the case that $n_1 = n-1, n_2 = n$. Clearly, to satisfy the assumptions of Theorem~\ref{thm.jackknife}, we need to require $|n_i - n_{i-1}| \gtrsim n_1$, which puts a minimum gap between the different sample sizes we can use. This begs the question: is this condition necessary to Theorem~\ref{thm.jackknife} to hold? If not, what bad consequences it will lead to? 

From a computational perspective, taking $d$ small in the delete-$d$ jackknife may reduce the computational burden. However, as we now show, the usual delete-$1$ jackknife does not satisfy Theorem~\ref{thm.jackknife} in general and exhibits drastically different bias and variance properties. \footnote{It has been observed in the literature~\cite{shao1989general} that in jackknife variance estimation, which is a different area of application of the jackknife methodology, sometimes it is also necessary to take $d$ large to guarantee consistency. }

We now show that the delete-$1$ jackknife may have bias and variance both diverging to infinity in the worst case. 

\begin{theorem}\label{thm.delete1jackknifeworst}
Let $\hat{f}_r$ denote the delete-$1$ $r$-jackknife estimator. There exists a fixed function $f\in C(0,1]$ that satisfies $\| f \| \leq 1$ such that
\begin{align}\label{eqn.s_zero_bad_bias}
\| \mathbb{E}_p \hat{f}_r - f(p) \| \gtrsim n^{r-1}. 
\end{align} 
If we allow the function $f$ to depend on $n$, then one can have $f\in C[0,1]$. \footnote{We emphasize that if we restrict $f\in C[0,1], \| f\| \leq 1$, and do not allow $f$ to depend on $n$, then one cannot achieve the bound (\ref{eqn.s_zero_bad_bias}). Indeed, noting that $\sum_{i  =1}^r C_i = 1$, the error term can be written as 
\begin{align}
\left | \sum_{i = 1}^r C_i \mathbb{E}_p f(\hat{p}_{n+i-r}) - f(p) \right | & = \left | \sum_{i = 1}^r C_i \left( \mathbb{E}_p f(\hat{p}_{n+i-r}) - f(p) \right) \right | \\
& \leq \sum_{i = 1}^r |C_i| \| \mathbb{E}_p f(\hat{p}_{n+i-r}) - f(p) \|,
\end{align}
It follows from the Bernstein theorem~\cite[Chap. 7]{alon2004probabilistic} that $ \lim_{n\to \infty}\| \mathbb{E}_p f(\hat{p}_{n}) - f(p) \| = 0$ for any continuous function $f$ on $[0,1]$. Hence, for any $f\in C[0,1]$ one has
\begin{align}
\| \mathbb{E}_p \hat{f}_r - f(p) \| = o(n^{r-1}),
\end{align} 
since $\max_{1\leq i\leq r}|C_i| \lesssim n^{r-1}$ for the delete-1 $r$-jackknife. }

Meanwhile, for any $n\geq 4$, there exists a function $f\in C[0,1]$ depending on $n$ such that 
\begin{align}
\| \mathsf{Var}_p(\hat{f}_2) \| \geq \frac{n^2}{e}. 
\end{align}
\end{theorem}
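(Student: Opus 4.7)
The starting point is the explicit formula for the delete-$1$ $r$-jackknife coefficients. With $n_i=n-r+i$, a direct computation of $\prod_{j\ne i}(i-j)=(-1)^{r-i}(i-1)!(r-i)!$ gives
\[
C_i=\frac{(-1)^{r-i}(n-r+i)^{r-1}}{(i-1)!\,(r-i)!},
\qquad \sum_{i=1}^r |C_i|\asymp_r n^{r-1},
\]
with all $|C_i|\asymp_r n^{r-1}$ and $\mathrm{sign}(C_i)=(-1)^{r-i}$. My overall strategy is to evaluate at $p=1/n$, exploit the Poisson-like behavior $n_i p(1-p)^{n_i-1}=e^{-1}+O(1/n)$ and $\mathbb{P}(\mathrm{Bin}(n_i,p)\ge 2)=1-2e^{-1}+O(1/n)$, and design $f$ so that the products $C_i\,\mathbb{E}_p f(\hat p_{n_i})$ have a common sign. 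The decisive numerical input is $e^{-1}>1-2e^{-1}$, equivalently $e<3$.

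For Part 1 (fixed $f\in C(0,1]$), define $f$ by $f(1/m)=(-1)^m$ on the grid $\{1/m\}_{m\ge 1}$, extend linearly on $[1/(m{+}1),1/m]$, and set $f(0)=0$. Then $\|f\|_\infty=1$ and $f$ is continuous on $(0,1]$. At $p=1/n$,
\[
\mathbb{E}_p f(\hat p_{n_i})=f(0)(1-p)^{n_i}+f(1/n_i)\,n_ip(1-p)^{n_i-1}+R_i=(-1)^{n_i}\,n_ip(1-p)^{n_i-1}+R_i,
\]
where $|R_i|\le \mathbb{P}(\mathrm{Bin}(n_i,p)\ge 2)=1-2e^{-1}+O(1/n)$. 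Because $n_i=n-r+i$, one has $C_i\cdot(-1)^{n_i}=(-1)^n|C_i|$ independent of $i$, so the leading terms synchronize and
\[
\Big|\sum_i C_i\,\mathbb{E}_p f(\hat p_{n_i})\Big|\ge \big(e^{-1}-(1-2e^{-1})+o(1)\big)\sum_i|C_i|=\big(3e^{-1}-1+o(1)\big)\sum_i|C_i|\gtrsim_r n^{r-1}.
\]
Combining with $|f(p)|\le 1$ yields $\|\mathbb{E}_p\hat f_r-f(p)\|_\infty\gtrsim n^{r-1}$.

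For Part 2, where $f$ may depend on $n$ and must lie in $C[0,1]$, I localize the previous construction: let $f_n=\sum_{i=1}^r(-1)^{r-i}\Phi_i$, where $\Phi_i$ is a continuous tent of height $1$ supported in a neighborhood of $1/n_i$ of half-width $\delta=\tfrac{1}{3n^2}$. Since consecutive $1/n_i$ are separated by at least $1/[n(n+1)]>2\delta$, and every $k/n_j$ with $k\ge 2$ lies at distance $\ge 1/n\gg\delta$ from each $1/n_i$, the supports are disjoint and avoid every $k/n_j$ with $k\ne 1$, as well as $0$. Hence $f_n\in C[0,1]$, $\|f_n\|_\infty=1$, the residuals $R_i$ vanish identically, and at $p=1/n$ the bias equals $f_n(1/n)-\sum_i|C_i|\,n_ip(1-p)^{n_i-1}$, whose magnitude is $\gtrsim_r n^{r-1}$.

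For Part 3, I take $f\in C[0,1]$ to be the piecewise-linear interpolant of $f(0)=f(1)=0$, $f(k/n)=(-1)^k$ for $1\le k\le n-1$, and $f(k/(n-1))=0$ for $0\le k\le n-1$. The grids $\{k/n\}$ and $\{k/(n-1)\}$ meet only at $0$ and $1$ (where the assigned values agree), so $f$ is well defined with $\|f\|_\infty=1$. Since $f$ vanishes at every value $\hat p_{n-1}$ can take, $\mathbb U_n[f(\hat p_{n-1})]\equiv 0$, so $\hat f_2=nf(\hat p_n)$ and $\mathsf{Var}_p(\hat f_2)=n^2\mathsf{Var}_p(f(\hat p_n))$. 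At $p=\tfrac12$ the identities $\sum_{k=0}^n(-1)^k\binom{n}{k}=0$ and $\sum_{k=0}^n\binom{n}{k}=2^n$ give
\[
\mathbb{E}_{1/2} f(\hat p_n)=-2^{-n}\bigl(1+(-1)^n\bigr),\qquad \mathbb{E}_{1/2}\!\left[f(\hat p_n)^2\right]=1-2^{1-n},
\]
whence $\mathsf{Var}_{1/2}(f(\hat p_n))\ge 1-2^{1-n}-2^{2-2n}\ge 55/64$ for $n\ge 4$, so $\|\mathsf{Var}_p(\hat f_2)\|_\infty\ge n^2\cdot 55/64>n^2/e$.

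The main obstacle is Part 1: producing a \emph{single} fixed continuous function whose sign pattern at the scattered points $\{1/n_i\}_{i=1}^r$ is synchronized with $\mathrm{sign}(C_i)=(-1)^{r-i}$ simultaneously for every $n$. The oscillatory choice $f(1/m)=(-1)^m$ achieves this because the parity pairing collapses to $(-1)^n$, but the price is that the non-leading Poisson mass (from $\{k/n_i:k\ge 2\}$) cannot be engineered away, and one must rely on the quantitative inequality $e<3$ for the leading term to beat the residual. Parts 2 and 3 circumvent this by tailoring $f$ to $n$, with Part 3 exploiting the near-disjointness of the two grids to annihilate the $U$-statistic term outright.
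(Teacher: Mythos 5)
Your proof is correct. For the bias lower bound with a fixed $f\in C(0,1]$ you take essentially the paper's route: evaluate at $p=1/n$, align an oscillation of $f$ on the grid $\{1/m\}$ with the alternating signs of the delete-$1$ coefficients so that the Poisson-mass-one terms add coherently with total weight $e^{-1}\sum_i|C_i|$, and beat the residual mass $\mathbb{P}(\mathsf{B}(n_i,1/n)\ge 2)\to 1-2e^{-1}$ using $e<3$; the only cosmetic difference is that you use values $\pm 1$ where the paper uses $\{0,1\}$ (and hence can drop absolute values via nonnegativity). Your Parts 2 and 3 genuinely diverge from the paper. For the $C[0,1]$ version the paper simply redefines $f$ linearly on $[0,1/n]$, which changes no expectation since no lattice point $k/n_i$ with $k\ge 1$ lies in $(0,1/n)$; your localized-tent construction is more elaborate but buys an exact cancellation of the residual terms. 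For the variance, the paper uses the explicit representation of the delete-$1$ $2$-jackknife (Lemma~\ref{lemma.2jackkniferepres}) with a piecewise-linear $f$ evaluated at $p=1/n$, whereas you annihilate the $U$-statistic term outright by making $f$ vanish on the $(n-1)$-grid (using that the grids $\{k/n\}$ and $\{k/(n-1)\}$ meet only at $0$ and $1$) and then compute $\mathsf{Var}_{1/2}(f(\hat p_n))$ for an alternating $f$ on the $n$-grid; this is a clean alternative that avoids the representation lemma entirely, and your numerical bound $55/64>1/e$ for $n\ge 4$ checks out.
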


Theorem~\ref{thm.delete1jackknifeworst} shows that in the worst case, the delete-$1$ $r$-jackknife may have bad performances compared to that satisfying Condition~\ref{condi.boundedcoeffcondition}. Before we delve into the refined analysis of delete-$1$ $r$-jackknife, we illustrate the connection between various types of $r$-jackknife estimators. It turns out that the jackknife is intimately related to the divided differences of functions. 
\begin{definition}[Divided difference]\label{def.divideddifference}
The divided difference $f[x_1,x_2,\ldots,x_n]$ of a function $f$ over $n$ distinct points $\{x_1,x_2,\ldots,x_n\}$ is defined as
\begin{align}
f[x_1,x_2,\ldots,x_n] = \sum_{i =1}^n \frac{f(x_i)}{\prod_{j\neq i} (x_i - x_j)}. 
\end{align}
\end{definition}

It follows from~(\ref{eqn.expectationjackknifegeneral}), ~(\ref{eqn.ciexpressions}) and Lemma~\ref{lemma.higherpower} in Appendix~\ref{sec.auxlemmas} that the bias of a general $r$-jackknife estimator $\hat{f}_r$ can be written as
\begin{align}
\mathbb{E}_p [\hat{f}_r] - f(p) & = \sum_{i = 1}^r \prod_{j\neq i} \frac{n_i}{n_i - n_j} \left( \mathbb{E}_p [f(\hat{p}_{n_i})] - f(p) \right) \\
& = \sum_{i  =1}^r \frac{n_i^{r-1}\left( \mathbb{E}_p [f(\hat{p}_{n_i})] - f(p) \right) }{\prod_{j\neq i} (n_i - n_j)}. 
\end{align}

Define $G_{n,f,p} = n^{r-1} \left( \mathbb{E}_p [f(\hat{p}_n)] - f(p) \right)$. Then, the bias of $\hat{f}_r$ can be written as the divided difference of function $G_{\cdot,f,p}$:
\begin{align}\label{eqn.dvdjackknifebias}
\mathbb{E}_p [\hat{f}_r] - f(p) & = G_{\cdot,f,p} [n_1,n_2,\ldots,n_r]. 
\end{align}

It follows from the mean value theorem of divided differences defined over integers in Lemma~\ref{lemma.meanvaluetheorem} of Appendix~\ref{sec.auxlemmas} that for every $p,f$, 
\begin{align}
 &| \mathbb{E}_p [\hat{f}_r] - f(p)| \nonumber \\
& \quad \leq \max_{n_1\leq n\leq n_r} |  G_{\cdot,f,p} [n-r+1,n-r+2,\ldots,n-1,n] |,\label{eqn.delete1worst}
\end{align}
and the right hand side of~(\ref{eqn.delete1worst}) is nothing but the maximum of the bias of the delete-$1$ $r$-jackknife with varying sample sizes. 

Equation~(\ref{eqn.delete1worst}) shows that in terms of the bias, the delete-$1$ jackknife might be the ``worst'' among all $r$-jackknife estimators. However, what is the precise performance of the delete-$1$ $r$-jackknife when the function $f$ is ``smooth''? Does the performance improve compared to Theorem~\ref{thm.delete1jackknifeworst}? We answer this question below. 
\begin{condition}[Condition \emph{$D_s$}]\label{condi.conditionds}
A function $f: [0,1]\mapsto \mathbb{R}$ is said to satisfy the condition $D_s, s\geq 0,s\in \mathbb{Z}$ with parameter $L>0$ if the following is true:
\begin{enumerate}
\item $s = 0$: $f$ is Lebesgue integrable on $[0,1]$ and $\sup_{x\in [0,1]}|f(x)|\leq L$. 
\item $s\geq 1$:
\begin{enumerate}
\item $f^{(s-1)}$ is absolutely continuous on $[0,1]$;
\item $\sup_{x\in [0,1]} |f^{(i)}(x)|\leq L, 0\leq i\leq s$. 
\end{enumerate}  
\end{enumerate}
\end{condition}

\begin{remark}
We mention that if a function $f$ satisfies condition $D_s$, it does not necessarily belong to the space $C^s[0,1]$, where $C^s[0,1]$ denotes the space of functions $f$ on $[0,1]$ such that $f^{(s)}$ is continuous. Indeed, the function $f(x) = x^2 \sin(1/x) \mathbbm{1}(x\in (0,1])$ satisfies condition $D_1$ as a function mapping from $[0,1]$ to $\mathbb{R}$, but it does not belong to $C^1[0,1]$. 
\end{remark}

The performance of the delete-$1$ $r$-jackknife in estimating $f(p)$ satisfying condition $D_s$ is summarized in the following theorem. 
\begin{theorem}\label{thm.jackknife_general_rs}
For any $r\ge 1, s\ge 0$ and $f$ satisfying condition $D_s$ with parameter $L$, let $\hat{f}_r$ be the delete-$1$ $r$-jackknife. Then, 
\begin{align}
\|\EE_p \hat{f}_r - f(p)\| \lesssim_{r,s,L}  \begin{cases}
n^{r-s-1} & \text{if }0\le s\le 2r-2;\\
n^{-(r-\frac{1}{2})} & \text{if }s=2r-1;\\
n^{-r} & \text{if }s\ge 2r.
\end{cases}
\end{align}
\end{theorem}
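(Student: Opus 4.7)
\emph{Reduction to a backward difference.} Because the delete-$1$ $r$-jackknife uses equally spaced integer sample sizes $n-r+1,\ldots,n$ with unit gap, the divided difference in~(\ref{eqn.dvdjackknifebias}) collapses into a backward finite difference:
\begin{align*}
\EE_p\hat{f}_r - f(p) = \frac{1}{(r-1)!}\,\Delta^{r-1}\bigl[n^{r-1}\bigl(B_n[f](p)-f(p)\bigr)\bigr],
\end{align*}
where $B_n[f](p):=\EE_p[f(\hat{p}_n)]$ is the Bernstein polynomial at $p$ and $\Delta$ is the backward operator of~(\ref{eqn.backwardifferencedefine}). The proof reduces to estimating this backward difference uniformly in $p\in[0,1]$ by the claimed rates.

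\emph{Taylor decomposition and polynomial-in-$1/n$ part.} Under Condition~\ref{condi.conditionds} I Taylor-expand $f$ around $p$ to degree $s-1$ with integral remainder: $f(x)=T_{s-1}(x)+R_s(x)$ with $|R_s(x)|\le L|x-p|^s/s!$. Substituting into $B_n[f](p)-f(p)$ produces a polynomial-in-$1/n$ part $(A):=\sum_{k=2}^{s-1}(f^{(k)}(p)/k!)\,\mu_k(n,p)$ and a remainder $(B):=\EE_p[R_s(\hat{p}_n)]$, where the binomial central moments are explicitly $\mu_k(n,p):=\EE_p[(\hat{p}_n-p)^k]=\sum_{j=\lceil k/2\rceil}^{k-1}P_{k,j}(p)/n^j$ with $\|P_{k,j}\|_\infty\lesssim_k 1$. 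Multiplying $(A)$ by $n^{r-1}$ gives a combination of monomials $n^{r-1-j}$: terms with $j\le r-1$ are polynomials in $n$ of degree at most $r-2<r-1$, hence annihilated by $\Delta^{r-1}$; terms with $j\ge r$ reduce to $1/n^{j-r+1}$ with $|\Delta^{r-1}(1/n^{j-r+1})|\lesssim_{r,j}1/n^j\le 1/n^r$. So $(A)$ contributes at most $O_{r,s,L}(n^{-r})$ to the bias, below every rate in the theorem.

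\emph{Remainder, case dichotomy, and main obstacle.} The core task is $\Delta^{r-1}[n^{r-1}(B)]$. A naive triangle inequality with $\EE_p|\hat{p}_n-p|^s\lesssim_s n^{-s/2}$ yields only $\lesssim_{r,s,L}n^{r-1-s/2}$, which matches the target rate $n^{r-s-1}$ only at $s\in\{0,1\}$. For $s\ge 2$ one must extract additional $1/n$ factors from $\Delta^{r-1}$ by exploiting the smoothness of $n\mapsto\EE_p[R_s(\hat{p}_n)]$. My plan is to rewrite $(B)=\frac{1}{(s-1)!}\int_0^1 f^{(s)}(t)\,K_s(t;n,p)\,dt$ via the integral form of the Taylor remainder and Fubini, where the kernel
\begin{align*}
K_s(t;n,p) = \begin{cases} \EE_p[(\hat{p}_n-t)_+^{s-1}] & \text{if }t>p, \\ (-1)^s\,\EE_p[(t-\hat{p}_n)_+^{s-1}] & \text{if }t<p, \end{cases}
\end{align*}
is a truncated binomial moment, and then perform a Voronovskaya-type expansion of $n^{r-1}K_s(t;n,p)$ in $1/n$ up to order $\min(r,\lceil s/2\rceil)$ uniformly in $t$. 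Applying the polynomial-killing argument of the previous paragraph to the leading orders and integrating the residual against $|f^{(s)}(t)|\le L$ produces the three rates: $n^{r-s-1}$ when $s\le 2r-2$ (only $\lfloor s/2\rfloor$ expansion orders available), $n^{-(r-1/2)}$ when $s=2r-1$ (the boundary parity case), and $n^{-r}$ when $s\ge 2r$ (full saturation matching the Bernstein-operator saturation). The hard part is proving this uniform expansion of $K_s$ under the minimal $D_s$ smoothness (where $f^{(s)}$ is bounded but possibly discontinuous) with the correct parity-sensitive residual; a natural route is the probabilistic coupling $n\hat{p}_n=(n-1)\hat{p}_{n-1}+X_n$, which makes $n\mapsto K_s(t;n,p)$ visibly smooth under backward differencing in $n$ and enables telescoping across the $r$ nodes.
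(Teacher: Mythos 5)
Your architecture coincides with the paper's: the same Taylor expansion with integral remainder (Lemma~\ref{lemma.Taylorjackknife}), the same disposal of the polynomial-in-$1/n$ part via the Vandermonde identities of Lemma~\ref{lemma.higherpower}, the same reduction of the bias to backward differences in $n$ of the truncated moments $A_{n,s-1}(t)=\EE_p(\hat p_n-t)_+^{s-1}$, and the same coupling of $\hat p_{n-k}$, $k=0,\dots,r-1$, on a common Bernoulli sequence. Rewriting the unit-spacing divided difference as an exact backward difference is also fine; the paper uses the one-sided inequality of Lemma~\ref{lemma.meanvaluetheorem}, which amounts to the same thing here.

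The gap is that the entire content of the theorem lives in the estimate you defer, and the mechanism you propose for it would not produce the stated rates. You ask for a ``Voronovskaya-type expansion of $n^{r-1}K_s(t;n,p)$ in $1/n$ up to order $\min(r,\lceil s/2\rceil)$ uniformly in $t$,'' followed by polynomial-killing and integration. But $x\mapsto(x-t)_+^{s-1}$ is not $s$ times differentiable at $x=t$, so $n\mapsto A_{n,s-1}(t)$ admits no uniform asymptotic expansion in integer powers of $1/n$ when $|t-p|\lesssim n^{-1/2}$; the backward differences saturate. What the paper actually proves (Lemma~\ref{lemma.A_nr}, Corollary~\ref{cor.A_nr_onlyn}) is the pointwise-in-$t$ bound
\begin{align}
|\Delta^{j} A_{n,s-1}(t)| \lesssim_{r,s} \left(\frac{1}{\sqrt{t+n^{-1}}}\,e^{-cn(t-p)^2/t}+e^{-cn(t-p)^2}\right) n^{-\min\{\frac{s-1}{2}+j,\; s-\frac12\}},
\end{align}
in which the first exponent gains a full $n^{-1}$ per difference while the cap $n^{-(s-1/2)}$ comes from coupling paths that cross the kink at $t$ (an event of probability $\asymp 1/\sqrt{nt}$, localized to $|t-p|\lesssim n^{-1/2}$) and gains nothing beyond $j=\lceil s/2\rceil$. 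Integrating the Gaussian envelopes over $t$ supplies the final $n^{-1/2}$ and yields $n^{-\min\{s/2,\,s-r+1\}}$, whence all three regimes, including the half-integer exponent at $s=2r-1$. An integer-power expansion cannot see the $n^{-(s-1/2)}$ saturation, and without this lemma your plan only certifies the $n^{r-1-s/2}$ bound you already concede is insufficient. Establishing Lemma~\ref{lemma.A_nr} (Chernoff localization, Stirling estimates for the point masses, and the good-path accounting of the remainder term) is the bulk of the proof, so what you have is a correct roadmap with its central pillar missing.
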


\begin{theorem}\label{thm.jackknife_rs_converse}
For $1\le s\le 2r-3$, there exists some universal constant $c>0$ such that for any $n\in\mathbb{N}$, there exists some function $f\in C^{s}[0,1]$ such that $\|f\|\le 1, \|f'\|\le 1,\cdots,\|f^{(s)}\|\le 1$, and for delete-1 $r$-jackknife $\hat{f}_r$:
\begin{align}
\|\EE_p\hat{f}_r(\hat{p}_n)-f(p)\| \ge cn^{r-1-s}.
\end{align}
\end{theorem}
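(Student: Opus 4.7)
The plan is to construct an explicit worst-case function $f=f_n\in C^s[0,1]$ of the form
\[
f_n(x)\;=\;c\,N^{-s}\sin(Nx), \qquad N=2\pi n + \pi/2,
\]
with $c$ a small absolute constant; the derivative constraint $\|f_n^{(i)}\|_\infty=cN^{i-s}\leq c\leq 1$ for $0\leq i\leq s$ is automatic since $N\geq 1$ and $i-s\leq 0$. Using (\ref{eqn.dvdjackknifebias}) with consecutive-integer nodes $n_i=n-r+i$, the bias of the delete-$1$ $r$-jackknife reads $\EE_p\hat f_r-f(p)=\sum_{i=1}^r C_i(B_{n_i}f(p)-f(p))$, where $B_m$ is the Bernstein operator and $C_i=(-1)^{r-i}\binom{r-1}{i-1}n_i^{r-1}/(r-1)!$ has $|C_i|\asymp n^{r-1}$ with alternating signs and $\sum_i|C_i|\asymp n^{r-1}$. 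The goal is to show that at the evaluation point $p_n=1/2$ the Bernstein biases $(B_{n_i}f_n-f_n)(1/2)$ have magnitude $\asymp n^{-s}$ with a sign pattern that reinforces the $C_i$'s, making the weighted sum attain magnitude $\asymp n^{r-1}\cdot n^{-s}=n^{r-1-s}$.

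For the analysis, use the explicit formula $B_m[\sin(Nx)](p)=\operatorname{Im}\bigl[(1+p(e^{iN/m}-1))^m\bigr]$. Write $1+p(e^{iN/m}-1)=\rho_m(p)\exp(i\phi_m(p))$ and expand to second order around $N/m=2\pi$. At $p=1/2$ one verifies (i) $\rho_m(1/2)^m=1-O(1/n)$ uniformly for $m\in\{n-r+1,\ldots,n\}$, and (ii) the total phase $m\phi_m(1/2)+N/2$ increments by approximately $\pi$ as $m$ decreases by $1$ through this window. Consequently $B_m f_n(1/2)$ takes alternating values $\pm c/(N^s\sqrt{2})+o(N^{-s})$ across the window, and subtracting the constant $f_n(1/2)=c(-1)^n/(N^s\sqrt{2})$ leaves $(B_{n_i}f_n-f_n)(1/2)$ of magnitude $\asymp cN^{-s}\asymp cn^{-s}$ for each $i$, with signs whose products with the corresponding $(-1)^{r-i}$ all agree. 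A direct summation then gives $|\sum_i C_i(B_{n_i}f_n-f_n)(1/2)|\asymp n^{r-1-s}$, as required.

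The main technical obstacle is the precise second-order expansion: to prevent the $O(1/n)$ corrections from destroying the leading sign-alternation pattern, one must track the complex phase $m\phi_m(1/2)$ modulo $2\pi$ to accuracy $o(1)$ uniformly over $m$ in the window. This reduces to a perturbative analysis of $m\log(1+(e^{iN/m}-1)/2)$ around $N/m=2\pi$, made tractable by the fact that $N-2\pi m\asymp 1$ for $m\asymp n$; a careful bookkeeping of the next-order terms in $1/n$ is needed to certify the alignment of signs across all $r$ values of $i$. The restriction $1\leq s\leq 2r-3$ in the theorem is not essential to the construction itself, which would yield the same lower bound up through $s=2r-2$ with more careful bookkeeping; rather, for the endpoint regimes $s=2r-2$ and $s\geq 2r-1$ the matching upper bounds in Theorem~\ref{thm.jackknife_general_rs} become $n^{-(r-1)}$ or $n^{-(r-1/2)}$, respectively, and a different or more delicate test function would be required to match those sharper rates.
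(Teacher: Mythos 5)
Your construction is correct in substance but proceeds by a genuinely different route from the paper. The paper's proof goes through the integral (Peano-kernel) representation of Lemma~\ref{lemma.Taylorjackknife}, takes $f^{(s)}$ to be a continuous approximation of $\mathrm{sign}\bigl(\sum_i C_i\EE_p(\hat{p}_{n_i}-t)_+^{s-1}\bigr)$, and then needs the dedicated converse estimate of Lemma~\ref{lem.A_nr_converse} to show that this kernel has magnitude $\gtrsim n^{r-s-1/2}$ on a set of $t$'s of measure $\asymp n^{-1/2}$; the lower bound is thus an exact extremization of the upper-bound analysis. You instead exploit aliasing of a single high-frequency mode: with $N=2\pi n+\pi/2$ the Bernstein transform at $p=1/2$ is \emph{exactly} $B_m[\sin(Nx)](1/2)=\sin(N/2)\cos\bigl(N/(2m)\bigr)^m=(-1)^m\sin(N/2)\bigl(1-O(1/n)\bigr)$ for $m$ in the window $\{n-r+1,\dots,n\}$, so the phase bookkeeping you flag as the main obstacle evaporates at this evaluation point --- no second-order expansion of $m\phi_m$ is needed. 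Your approach buys elementarity (no Lemma~\ref{lem.A_nr_converse}, no Taylor remainder machinery, an explicit $C^\infty$ witness) and, as you note, extends to $s=2r-2$; the paper's approach buys a term-by-term certification that its upper-bound decomposition is tight. One detail in your sketch does need repair: it is \emph{not} true that every $(B_{n_i}f_n-f_n)(1/2)$ has magnitude $\asymp n^{-s}$. Writing the difference as $cN^{-s}\tfrac{(-1)^n}{\sqrt2}\bigl[(-1)^{n_i}(1-O(1/n))-1\bigr]$, the bracket is $-2+O(1/n)$ when $n_i$ is odd but only $O(1/n)$ when $n_i$ is even, so half the terms are $O(n^{-s-1})$. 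The argument survives because the even-$n_i$ terms then contribute only $O(n^{r-2-s})$ after multiplication by $C_i$, while the odd-$n_i$ indices all carry the \emph{same} sign of $(-1)^{r-i}$ (sign of $C_i$ alternates with $i$ exactly as the parity of $n_i$ does), so their contributions add constructively to $\gtrsim n^{r-1}\cdot n^{-s}$; there is at least one odd $n_i$ in any window of $r\ge2$ consecutive integers, and $s\le 2r-3$ forces $r\ge2$. With that correction, and the usual adjustment of the constant for finitely many small $n$, your proof is complete.
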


\begin{theorem}\label{thm.jackknife_rs_converse_missing}
For integer $2r-2\leq s\leq 2r-1$, there exists some function $f\in C^s[0,1]$ such that $\|f\|\le 1, \|f'\|_\infty\le 1,\cdots,\|f^{(s)}\|_\infty\le 1$, and for delete-1 $r$-jackknife $\hat{f}_r$,
\begin{align}
\liminf_{n\to \infty} \frac{\|\EE_p\hat{f}_r(\hat{p}_n)-f(p)\|}{ n^{-s/2}} > 0. 
\end{align}
Moreover, if $s\geq 2r$, then 
\begin{align}
\|\EE_p\hat{f}_r(\hat{p}_n)-f(p)\|_\infty \gtrsim \frac{1}{n^r}. 
\end{align}
\end{theorem}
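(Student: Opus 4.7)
The plan is to prove each claim by exhibiting an explicit witness function $f$ and analyzing the bias through the divided-difference identity (\ref{eqn.dvdjackknifebias}): for the delete-1 $r$-jackknife,
\begin{align*}
\mathbb{E}_p\hat{f}_r - f(p) \;=\; G_{\cdot,f,p}[n-r+1,\ldots,n] \;=\; \frac{1}{(r-1)!}\Delta^{r-1}G_{\cdot,f,p}(n-r+1),
\end{align*}
with $G_{n,f,p}=n^{r-1}(B_n(f,p)-f(p))$ and $B_n$ the Bernstein operator. The key algebraic input is the orthogonality $\sum_i C_i/n_i^\rho = 0$ for $1\le \rho\le r-1$ from (\ref{eqn.vonderm}), which annihilates any term $a_\rho(p)/n^\rho$ with $\rho\le r-1$ in a Voronovskaya-type expansion of $B_n f - f$.

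For the \emph{moreover} assertion ($s\ge 2r$), I take $f(p)=p^{2r}/(2r)!$, which is $C^\infty$ with $\|f^{(k)}\|_\infty\le 1$ for every $k$. Because $f$ is a degree-$2r$ polynomial, Stirling-number computations for binomial moments give the exact finite expansion
\begin{align*}
B_n f - f \;=\; \sum_{k=1}^{r}\frac{a_k(p)}{n^k} + O(n^{-r-1}),
\end{align*}
where $a_r(p) = (p(1-p))^r/(2^r r!) + (\text{lower-derivative contributions})$ is nonzero at $p=1/2$. The orthogonality kills $k=1,\ldots,r-1$, leaving $a_r(p)\sum_i C_i/n_i^r$. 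Recognizing $\sum_i C_i/n_i^r$ as the divided difference of $1/x$ at the $r$ consecutive integers $n-r+1,\ldots,n$, it equals $(-1)^{r-1}/(n(n-1)\cdots(n-r+1))\asymp n^{-r}$, so evaluating at $p=1/2$ yields $\|\mathbb{E}_p\hat{f}_r-f(p)\|_\infty \gtrsim 1/n^r$.

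For the first assertion ($s\in\{2r-2,2r-1\}$), I propose taking $f\in C^s[0,1]$ with a critical non-smoothness at an interior point $p_0\in(0,1)$: $f^{(s)}$ continuous but $f^{(s+1)}$ only in $L^\infty$ (e.g., an $s$-fold iterated integral of an absolute-value function centered at $p_0$). At $p=p_0$, an Edgeworth-type expansion of the central absolute moments of $\mathrm{Binomial}(n,p_0)$ produces in $B_n f(p_0)-f(p_0)$ both integer- and half-integer-power-of-$1/n$ contributions; the orthogonality cancellation removes the integer terms up through order $1/n^{r-1}$, leaving a surviving contribution whose $(r-1)$-th divided difference over $n-r+1,\ldots,n$ produces the claimed $n^{-s/2}$ lower bound at $p_0$, after normalization to ensure $\|f^{(k)}\|_\infty\le 1$ for $k\le s$.

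The principal technical obstacle is calibrating this witness precisely: naive truncated-power candidates such as $c(p-p_0)^{s+1}_+/(s+1)!$ produce bias of order $n^{-(s+1)/2}$ at $p_0$ rather than the target $n^{-s/2}$, off by a factor of $\sqrt n$. Achieving the exact $n^{-s/2}$ rate requires either a multiscale (dyadic) superposition of bumps realizing the extremal relation $\omega_\varphi^{2r}(f,t)\asymp t^s$ in the sense of Lemma~\ref{lemma.dtmodulusproperties} and (\ref{eqn.dtmoduluscomputationexample}), or a sharper Edgeworth analysis isolating a non-canceled term of the correct order. Verifying that none of the higher-order corrections accidentally cancel the leading surviving term—so the liminf ratio in the theorem is strictly positive—is the delicate step.
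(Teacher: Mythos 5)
Your argument for the \emph{moreover} claim ($s\ge 2r$) is essentially the paper's proof (the paper takes a degree-$2r$ polynomial with leading coefficient one), worked out in more detail, and it is sound. The one loose end is that $a_r(p)$ must be shown not to vanish identically; asserting it is ``nonzero at $p=1/2$'' without computation leaves a small hole, since the lower-derivative contributions could in principle conspire. A clean way to close it: track only the coefficient of $p^{2r}$ in the bias. Since $B_n$ maps $p^{2r}$ to a degree-$2r$ polynomial with leading coefficient $\lambda_{2r}^{(n)}=\prod_{j=1}^{2r-1}(1-j/n)$, the leading coefficient of $\mathbb{E}_p\hat f_r-f$ is $\frac{1}{(2r)!}\sum_{k=1}^{2r-1}(-1)^k e_k(1,\dots,2r-1)\sum_i C_i n_i^{-k}$, and by (\ref{eqn.vonderm}) together with your divided-difference identity $\sum_i C_i n_i^{-r}=(-1)^{r-1}/(n_1\cdots n_r)$ this equals $-e_r(1,\dots,2r-1)/((2r)!\,n_1\cdots n_r)+O(n^{-r-1})\asymp n^{-r}$; equivalence of norms on the fixed finite-dimensional space of degree-$2r$ polynomials then gives $\|\mathbb{E}_p\hat f_r-f\|_\infty\gtrsim n^{-r}$.

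For the first claim ($s\in\{2r-2,2r-1\}$) there is a genuine gap: you describe a program but, by your own account, leave the decisive calibration step unresolved, and your diagnosis of why it is hard is accurate (a single truncated-power witness yields $n^{-(s+1)/2}$, off by $\sqrt n$, and an Edgeworth analysis at an interior point would have to isolate a non-cancelling half-integer term). The paper avoids this computation entirely by an indirect comparison: inequality (\ref{eqn.delete1worst}) bounds the bias of any $r$-jackknife satisfying Condition~\ref{condi.boundedcoeffcondition} by the \emph{maximum} of delete-$1$ biases over the window of sample sizes $[n_1,n_r]$, while Theorem~\ref{thm.jackknife} gives a matching \emph{lower} bound $\gtrsim\omega_\varphi^{2r}(f,1/\sqrt n)$ for the bounded-coefficient jackknife. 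Choosing $f$ with $\omega_\varphi^{2r}(f,t)\asymp t^s$ then forces the delete-$1$ bias to be $\gtrsim n^{-s/2}$ somewhere in every window, with no probabilistic expansion at all. Note that this route shifts the entire burden onto exhibiting a function in the stated smoothness class whose $2r$-th Ditzian--Totik modulus is genuinely of order $t^s$ (the extremal relation you mention only in passing); that is the real content of the first claim, and it is exactly the point your proposal does not settle. As written, the proposal does not prove the first assertion.
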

\begin{proof}
The first part follows from~(\ref{eqn.delete1worst}) and Theorem~\ref{thm.jackknife}. The second part follows from taking $f(p)$ to be a polynomial of order $2r$ with leading coefficient one. 
\end{proof}

Now we compare the performance of the $r$-jackknife estimator $\hat{f}_r$ with and without Condition~\ref{condi.boundedcoeffcondition}. Under Condition~\ref{condi.boundedcoeffcondition}, we know from Theorem \ref{thm.jackknife} that
\begin{align}
|\EE_p \hat{f}_r(\hat{p}_n) - f(p)| \lesssim_{r,s,L,C} n^{-\min\{r,\frac{s}{2}\}} 
\end{align}
for $f$ satisfying condition $D_s$ with parameter $L$ \footnote{It follows from the proof of Theorem~\ref{thm.jackknife} that the first part of Theorem~\ref{thm.jackknife} also applies to functions $f$ satisfying condition $D_s$. }, where the exponent is better than that of Theorem \ref{thm.jackknife_general_rs}. A pictorial illustration is shown in Figure \ref{fig.exponent}. 

\begin{figure}[h]
\centering
\includegraphics[scale=0.55]{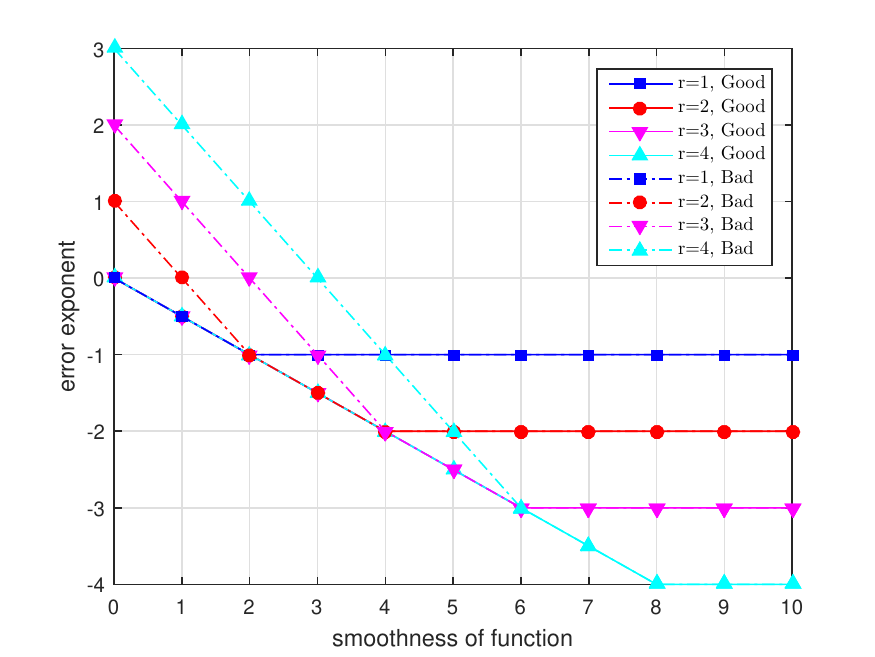}
\caption{Error exponents of ``Good" and ``Bad" jackknife estimators. Here ``Good'' refers to the $r$-jackknife satisfying Condition~\ref{condi.boundedcoeffcondition}, and ``Bad'' refers to the delete-$1$ $r$-jackknife.}\label{fig.exponent}
\end{figure}

\begin{remark}
For general delete-$d$ $r$-jackknife, the cases of $d \asymp n$ and $d = 1$ exhibit drastically different behavior. It remains fertile ground for research to analyze what is the minimum $d$ needed for the delete-$d$ $r$-jackknife to achieve the bias performance that is of the same order as those satisfying Condition~\ref{condi.boundedcoeffcondition} for a specific function $f$. 
\end{remark}

\subsubsection{Specific functions}

The last part of results pertaining to the jackknife investigates some specific functions $f(p)$. Here we take $f(p) = -p\ln p$ or $p^\alpha, 0<\alpha<1$. Those functions even do not belong to $D_1$ under Condition~\ref{condi.conditionds}. However, we show that the jackknife applied to these functions exhibits far better convergence rates than the worst case analysis in Theorem~\ref{thm.jackknife_rs_converse} predicted. 

We show that for the $r$-jackknife when $r = 2$, no matter whether Condition~\ref{condi.boundedcoeffcondition} is satisfied or not, the bias of the jackknife estimator can be universally controlled. 

\begin{theorem}\label{thm.specialfunctionslikeentropy}
Let $\hat{f}_2$ denote a general $2$-jackknife in Definition~\ref{def.generalrjackknifeestimatordefinition}. Then, 
\begin{enumerate}
\item if $f(p) = -p\ln p$, 
\begin{align}
\| \mathbb{E}_p \hat{f}_2 - f(p) \| \lesssim \frac{1}{n}.
\end{align}
\item if $f(p) = p^\alpha, 0<\alpha<1$, 
\begin{align}
\| \mathbb{E}_p \hat{f}_2 - f(p) \| \lesssim_\alpha \frac{1}{n^\alpha}.
\end{align}
\end{enumerate}

Meanwhile, let $\hat{f}_2$ be either the delete-$1$ $2$-jackknife, or a $2$-jackknife that satisfies Condition~\ref{condi.boundedcoeffcondition}. Then, 
\begin{enumerate}
\item if $f(p) = -p\ln p$, 
\begin{align}
\| \mathbb{E}_p \hat{f}_2 - f(p) \| \gtrsim \frac{1}{n}.
\end{align}
\item if $f(p) = p^\alpha, 0<\alpha<1$, 
\begin{align}
\| \mathbb{E}_p \hat{f}_2 - f(p) \| \gtrsim_\alpha \frac{1}{n^\alpha}.
\end{align}
\end{enumerate}
\end{theorem}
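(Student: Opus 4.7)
The starting point for all four claims is the identity
\begin{align*}
\mathbb{E}_p\hat{f}_2-f(p)\;=\;\frac{n_2\,e_{n_2}(p)-n_1\,e_{n_1}(p)}{n_2-n_1},\qquad e_m(p):=B_m(f,p)-f(p),
\end{align*}
obtained by combining $C_1+C_2=1$ with the explicit form~(\ref{eqn.ciexpressions}); here $B_m(f,p)=\mathbb{E}_p f(\hat{p}_m)$ is the Bernstein polynomial of degree $m$.

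For the \emph{upper bounds} I telescope the numerator over unit increments in $m$ and reduce to the uniform estimate
\begin{align*}
\bigl|m\,e_m(p)-(m-1)\,e_{m-1}(p)\bigr|\;\lesssim\;1/m
\end{align*}
for $f(p)=-p\ln p$ (and $\lesssim 1/m^\alpha$ for $f(p)=p^\alpha$). For $f(p)=-p\ln p$, the identity $k\binom{m}{k}=m\binom{m-1}{k-1}$ gives $m\,e_m(p)=h(mp)-\mathbb{E}[h(Y_m)]$ with $h(x)=x\ln x$ and $Y_m\sim\mathsf{B}(m,p)$; coupling $Y_m=Y_{m-1}+Z$ with $Z\sim\mathsf{Bern}(p)$ yields
\begin{align*}
m\,e_m(p)-(m-1)\,e_{m-1}(p) = \bigl[h(mp)-h((m-1)p)\bigr] - p\,\mathbb{E}\bigl[h(Y_{m-1}+1)-h(Y_{m-1})\bigr].
\end{align*}
Both bracketed terms are explicitly evaluable: $h(mp)-h((m-1)p)=p(1+\ln(mp))+O(1/m)$ by direct integration of $h'$, while $p\,\mathbb{E}[h(Y+1)-h(Y)]=p(1+\ln(mp))+O(1/m)$ via a law-of-large-numbers argument for $mp\gtrsim 1$ together with a small-$p$ Poisson expansion controlling $\mathbf{1}\{Y\ge 1\}\ln(Y+1/2)$ for $mp\lesssim 1$, so the two leading pieces cancel. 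The case $f(p)=p^\alpha$ is parallel with $h(x)=x^\alpha$. Summing the telescope produces the claimed $1/n$ and $1/n^\alpha$ rates.

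For the \emph{lower bound on a bounded-coefficient $2$-jackknife}, Theorem~\ref{thm.jackknife}(3) applies directly: formula~(\ref{eqn.dtmoduluscomputationexample}) gives $\omega_\varphi^4(-p\ln p,t)\asymp t^2$ (take $\delta=1,\gamma=1$) and $\omega_\varphi^4(p^\alpha,t)\asymp t^{2\alpha}$ (take $\delta=\alpha,\gamma=0$); the doubling constants $D=4$ and $D=2^{2\alpha}$ are strictly below $2^{2r}=16$, so Theorem~\ref{thm.jackknife}(3) yields the matching lower bounds $1/n$ and $1/n^\alpha$. For the \emph{delete-$1$ $2$-jackknife} (where Condition~\ref{condi.boundedcoeffcondition} fails), I exhibit the single bad point $p^\star=1/n$: the same coupling specializes to
\begin{align*}
\mathbb{E}_{1/n}\hat{f}_2-f(1/n)=\frac{1-B}{n}+O(1/n^2),\qquad B:=\mathbb{E}[h(W+1)-h(W)],\ W\sim\spo(1),
\end{align*}
for $f=-p\ln p$, and a direct series computation gives $e(B-1)=\sum_{m\ge 0}(\ln(m+2)-1)/m!$; the first four terms alone (using $\ln 2>0.69$, $\ln 3>1.09$, $\ln 4>1.38$, $\ln 5>1.60$) sum to a strictly positive value and all subsequent terms are nonnegative, so $B>1$. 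For $f=p^\alpha$, the analogous computation produces bias $\approx(M_\alpha(1-\alpha)-1)n^{-\alpha}$ with $M_\alpha:=\mathbb{E}[W^\alpha]$; by Jensen $M_\alpha\le\mathbb{E}[W]^\alpha=1$ and $1-\alpha<1$, so $M_\alpha(1-\alpha)-1\le -\alpha<0$ is bounded away from zero.

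The \emph{main obstacle} is the uniformity-in-$p$ of the upper-bound estimate near $p=0$: the Harris-type pointwise expansion $m\,e_m(p)=-(1-p)/2+(1/p-p)/(12m)+\cdots$ contains a $1/p$ singularity that rules out any naive term-by-term comparison, and the coupling-based representation above is the device that sidesteps it by rewriting the telescoped difference in terms of a bounded expectation under a binomial law whose mass shifts smoothly with $m$.
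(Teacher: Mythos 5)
Most of your argument tracks the paper closely: the upper bounds are obtained, as in the paper, by telescoping a uniform estimate on the consecutive difference $m\,e_m-(m-1)e_{m-1}$ (the paper imports the equivalent estimate $0\le \mathbb{E}_pf(\hat p_n)-\mathbb{E}_pf(\hat p_{n-1})\lesssim n^{-2}$, resp.\ $n^{-1-\alpha}$, from Strukov--Timan, whereas you rederive it via the coupling $Y_m=Y_{m-1}+Z$ — a legitimate self-contained route, though the uniformity of your ``law-of-large-numbers'' step $\mathbb{E}[\ln(Y+1/2)]=\ln(mp)+O(1/(mp))$ still needs to be written out); the bounded-coefficients lower bound via Theorem~\ref{thm.jackknife}(3) and~(\ref{eqn.dtmoduluscomputationexample}) is exactly the paper's argument; and your delete-$1$ entropy lower bound at $p=1/n$ is correct — your identity $e(B-1)=\sum_{k\ge0}(\ln(k+2)-1)/k!$ checks out and agrees with the paper's limit specialized to $c=1$.

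There is, however, a genuine gap in the delete-$1$ lower bound for $f(p)=p^\alpha$. Your claimed limit $(1-\alpha)M_\alpha-1$ drops a term of the \emph{same} order. Writing $ne_n-(n-1)e_{n-1}=n^{1-\alpha}\mathbb{E}Y_n^\alpha-(n-1)^{1-\alpha}\mathbb{E}Y_{n-1}^\alpha-n^{-\alpha}$ at $p=1/n$, the piece $n^{1-\alpha}\bigl(\mathbb{E}Y_n^\alpha-\mathbb{E}Y_{n-1}^\alpha\bigr)=n^{-\alpha}\,\mathbb{E}\bigl[(Y_{n-1}+1)^\alpha-Y_{n-1}^\alpha\bigr]$ contributes $D_\alpha n^{-\alpha}$ with $D_\alpha=\mathbb{E}[(W+1)^\alpha-W^\alpha]>0$, $W\sim\mathsf{Poi}(1)$, so the correct limit is
\begin{align}
\lim_{n\to\infty}n^\alpha\bigl(\mathbb{E}_{1/n}\hat f_2-f(1/n)\bigr)=\mathbb{E}[(W+1)^\alpha]-\alpha\,\mathbb{E}[W^\alpha]-1 ,
\end{align}
not $(1-\alpha)M_\alpha-1$. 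Your Jensen argument (``$\le-\alpha<0$, bounded away from zero'') therefore does not apply: the corrected constant vanishes as $\alpha\to0^+$ and as $\alpha\to1^-$ and is only about $-0.013$ at $\alpha=1/2$, so showing it is nonzero for \emph{every} $\alpha\in(0,1)$ requires a new argument that you have not supplied. The paper sidesteps exactly this fragility by evaluating at $p=c/n$ and letting $c\to0^+$: the term $-c^\alpha$ dominates $c\cdot O(1)$ as $c\to 0$ since $\alpha<1$, so some small $c$ makes the limit nonzero without any delicate cancellation check. You should either adopt that small-$c$ device or prove directly that $g(\alpha)=\mathbb{E}[(W+1)^\alpha]-\alpha\mathbb{E}[W^\alpha]-1\neq0$ on $(0,1)$.
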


\begin{remark}
We conjecture that Theorem~\ref{thm.specialfunctionslikeentropy} holds for any fixed $r$ instead of only $r = 2$. 
\end{remark}

\subsection{Bootstrap bias correction}

The rationale behind bootstrap bias correction is to use the plug-in rule to estimate the bias and then iterate the process~\cite{hall1992bootstrap}. Concretely, suppose we would like to estimate a function $f(\theta)$, and we have an estimator for $\theta$, denoted as $\hat{\theta}$. The estimator $\hat{\theta}(X_1^n)$ is a function of the observations $(X_1,X_2,\ldots,X_n)$, and $X_i \stackrel{\text{i.i.d.}}{\sim} P_\theta$. The bias of the plug-in rule $\hat{f}_1 =  f(\hat{\theta})$ is defined as
\begin{align}
e_1(\theta) = f(\theta) - \mathbb{E}_\theta f(\hat{\theta}). 
\end{align}

We would like to correct this bias. The \emph{additive} bootstrap bias correction does this by using the plug-in rule $e_1(\hat{\theta})$ to estimate $e_1(\theta)$, and then use $f(\hat{\theta}) + e_1(\hat{\theta})$ to estimate $f(\theta)$, hoping that this bias corrected estimator has a smaller bias. 

It is the place that the Monte Carlo approximation principle takes effect: it allows us to compute the plug-in estimator $e_1(\hat{\theta})$ without knowing the concrete form of the bias function $e_1(\theta)$. Indeed, we have
\begin{align}
e_1(\hat{\theta}) = f(\hat{\theta}) - \mathbb{E}_{\hat{\theta}} f(\hat{\theta}^*),
\end{align}
where $\hat{\theta}^*=\hat{\theta}(X_1^*,X_2^*,\ldots,X_n^*)$, and the samples $X_i^* \stackrel{\text{i.i.d.}}{\sim} P_{\hat{\theta}}$. To compute $\mathbb{E}_{\hat{\theta}} f(\hat{\theta}^*)$, it suffices to draw the $n$-tuple sample $(X_1^*,X_2^*,\ldots,X_n^*)$ in total $B$ times under $P_{\hat{\theta}}$, and use the empirical average to replace the expectation, hoping that the law of large number would make the empirical average close to the expectation $\mathbb{E}_{\hat{\theta}} f(\hat{\theta}^*)$. This argument also shows that it takes $B$ rounds of sampling to evaluate $e_1(\cdot)$ at one point. 

After doing bootstrap bias correction as introduced above once, we obtain $\hat{f}_2 =  f(\hat{\theta}) + e_1(\hat{\theta})$. What about its bias? The bias of this new estimator, denoted as $e_2(\theta)$, is 
\begin{align}
e_2(\theta) & = f(\theta) - \left( \mathbb{E}_\theta f(\hat{\theta}) + \mathbb{E}_\theta e_1(\hat{\theta}) \right) \\
& = e_1(\theta) - \mathbb{E}_\theta e_1(\hat{\theta}). 
\end{align}

Clearly, in order to compute $e_2(\hat{\theta})$, we need to evaluate $e_1(\cdot)$ in total $B$ times, which amounts to a total computation complexity $B^2$. 

It motivates the general formula: the bias of the bootstrap bias corrected estimator after $m-1$ rounds of correction is related to that after $m-2$ rounds via
\begin{align}
e_m(\theta) = e_{m-1}(\theta) - \mathbb{E}_\theta e_{m-1}(\hat{\theta}). 
\end{align}
Indeed, denoting the estimator after $m-2$ rounds of bias correction as $\hat{f}_{m-1}$, by definition we know $e_{m-1}(\theta) = f(\theta) - \mathbb{E}_\theta \hat{f}_{m-1}$. The bias corrected estimator after $m-1$ rounds is $\hat{f}_m = \hat{f}_{m-1} + e_{m-1}(\hat{\theta})$, whose bias is
\begin{align}
e_m(\theta) & = f(\theta) - \mathbb{E}_\theta \left( \hat{f}_{m-1} + e_{m-1}(\hat{\theta}) \right) \\
& = e_{m-1}(\theta) - \mathbb{E}_\theta e_{m-1}(\hat{\theta}). 
\end{align}

The bias corrected estimator after $m-1$ rounds of correction is
\begin{align}
\hat{f}_m = f(\hat{\theta}) + \sum_{i = 1}^{m-1} e_i(\hat{\theta}). 
\end{align}

It takes $B^{m-1}$ order computations to compute $\hat{f}_m$ if we view the computation of $\hat{f}_2 = f(\hat{\theta}) + e_1(\hat{\theta})$ takes computational time $B$. We introduce an linear operator $A_n$ that maps the function $f$ to the same function space such that
\begin{align}
A_n[f](\theta) & = \mathbb{E}_\theta f(\hat{\theta}). 
\end{align}
With the help of the operator $A_n$, one may view the bias of $\hat{f}_m$ in the following succinct way. Indeed, since
\begin{align}
e_m(\theta) & = e_{m-1}(\theta) - A_n[e_{m-1}](\theta) \\
& = (I - A_n)[e_{m-1}](\theta), 
\end{align}
we have
\begin{align}
\mathbb{E}_\theta \hat{f}_m & = A_n \left( I + \sum_{i = 1}^{m-1} (I-A_n)^{i} \right)[f] \\
& = A_n\left( \sum_{i = 0}^{m-1} (I-A_n)^i \right)[f] \\
& = (I - (I-A_n)^m)[f].  
\end{align}

The operator $I - (I-A_n)^{m}$ is known as the \emph{iterated Boolean sum} in the approximation theory literature~\cite{natanson1983application,sevy1991acceleration}. Indeed, defining the Boolean sum as $P \oplus Q = P + Q - PQ$, we have
\begin{align}
I - (I-A_n)^{m} = A_n \oplus A_n \oplus \ldots \oplus A_n = \oplus^{m} A_n,
\end{align}
where there are $m$ terms on the right hand side. 

%


Let the bias of the bootstrap bias corrected estimator after $m-1$ rounds be denoted as $e_m(p)$, where 
\begin{align}\label{eqn.bootstraprecurrence}
e_m(p) = e_{m-1}(p) - \mathbb{E}_p e_{m-1}(\hat{p}_n),
\end{align}
and $e_1(p) = f(p) - \mathbb{E}_p[f(\hat{p}_n)]$. Here $f\in C[0,1]$, and $n \cdot\hat{p}_n \sim \mathsf{B}(n,p)$. Our first result on bootstrap bias correction is about the limiting behavior of $e_m(p)$ as $m\to \infty$. In other words, what happens when we conduct the bootstrap bias correction infinitely many times? 

\begin{theorem}\label{thm.bootstraplimit}
Denote the unique polynomial of order $n$ that interpolates the function $f(p)$ at $n+1$ points $\{i/n: 0\leq i \leq n\}$ by $L_n[f]$. Then, for any $f: [0,1] \mapsto \mathbb{R}$, 
\begin{align}
\lim_{m \to \infty} \sup_{p\in [0,1]} | e_m(p) - (f - L_n[f]) | = 0,
\end{align}
where $e_m(p)$ is defined in~(\ref{eqn.bootstraprecurrence}). 
\end{theorem}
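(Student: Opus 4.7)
The plan is to interpret the recursion~(\ref{eqn.bootstraprecurrence}) operator-theoretically and reduce the limit to a finite-dimensional spectral computation. Define the Bernstein (expectation) operator $A_n$ on $C[0,1]$ by
\begin{align*}
A_n[h](p) = \mathbb{E}_p h(\hat{p}_n) = \sum_{i=0}^n h(i/n)\binom{n}{i}p^i(1-p)^{n-i},
\end{align*}
so that~(\ref{eqn.bootstraprecurrence}) reads $e_m = (I-A_n)[e_{m-1}]$ with $e_1 = (I-A_n)[f]$; iterating gives $e_m = (I-A_n)^m[f]$.

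The crux of the argument is the decomposition $f = L_n[f] + g$ with $g = f - L_n[f]$. Since $A_n[h]$ depends on $h$ only through the values $\{h(i/n)\}_{i=0}^n$ and $g$ vanishes at every node $i/n$ by definition of $L_n[f]$, we obtain $A_n[g] = 0$, hence $(I-A_n)^m[g] = g$ for every $m \geq 1$. It therefore suffices to show $\|(I-A_n)^m[L_n[f]]\|_\infty \to 0$, which is a finite-dimensional problem because $L_n[f] \in \poly_n$ and $A_n$ maps $\poly_n$ into itself.

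For the spectral step I would use the classical fact that in the monomial basis $A_n$ acts on $\poly_n$ as an upper triangular matrix with diagonal entries $\lambda_k = \prod_{j=0}^{k-1}(1-j/n) = \frac{n!}{(n-k)!\,n^k}$ for $k = 0, 1, \ldots, n$. These eigenvalues are distinct except for the coincidence $\lambda_0 = \lambda_1 = 1$, and since $A_n[1] = 1$ and $A_n[x] = x$ the geometric multiplicity of $\lambda = 1$ equals its algebraic multiplicity $2$; thus $A_n$ is diagonalizable on $\poly_n$ with $1$-eigenspace $\poly_1$ and all remaining eigenvalues lying in $(0,1)$. Decomposing $L_n[f] = h_1 + h_{<1}$ along this splitting, $(I-A_n)^m[h_1] = 0$ for every $m \geq 1$, while on the complementary subspace $I - A_n$ has spectral radius $\max_{2 \leq k \leq n}(1-\lambda_k) = 1 - n!/n^n < 1$, so $(I-A_n)^m[h_{<1}] \to 0$ in any norm on the finite-dimensional space $\poly_n$. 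Combining the two components yields $e_m \to g = f - L_n[f]$ uniformly on $[0,1]$.

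The main obstacle is the spectral analysis itself, particularly verifying that the repeated eigenvalue $1$ is semisimple (has no Jordan block), which is precisely what prevents the iteration from growing polynomially on the $\poly_1$-component and instead keeps it stationary. Once this is in hand, the decomposition $f = L_n[f] + g$ together with the annihilation identity $A_n[g] = 0$ reduces a seemingly infinite-dimensional statement about $C[0,1]$ to routine finite-dimensional linear algebra.
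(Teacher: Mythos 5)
Your proposal is correct and follows essentially the same route as the paper: the decomposition $f = L_n[f] + g$ with $A_n[g]=0$, followed by a spectral analysis of the Bernstein operator on $\poly_n$ with eigenvalues $\lambda_k = \frac{n!}{(n-k)!\,n^k}$, where $\lambda_0=\lambda_1=1$ kills the affine component and $0<\lambda_k<1$ for $k\ge 2$ drives the rest to zero. The only cosmetic difference is that the paper cites the known full eigenfunction basis $p_k^{(n)}$ of Cooper--Waldron, whereas you re-derive diagonalizability from the upper-triangular action on monomials plus the semisimplicity of the eigenvalue $1$; both are valid.
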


In other words, the bias function converges uniformly to the approximation error of the Lagrange interpolation polynomial that interpolates the function $f$ at equidistant points on $[0,1]$. This interpolating polynomial is in general known to exhibit bad approximation properties unless the function is very smooth. The Bernstein example below shows an extreme case. 
\begin{lemma} \cite[Chap. 2, Sec. 2]{natanson1964constructivevol3} [Bernstein's example] \label{lemma.bernsteinbadexample}
Suppose $f(p) = |p-1/2|$, and $L_n[f]$ denotes the unique polynomial that interpolates the function $f(p)$ at $n+1$ points $\{i/n: 0\leq i\leq n\}$. Then, 
\begin{align}
\liminf_{n\to \infty} |L_n[f](p)| = \infty
\end{align}
for all $p\in [0,1]$ except for $p = 0, 1/2, $ and $1$. \footnote{This phenomenon has been generalized to other functions such as $|p-\frac{1}{2}|^\alpha, \alpha>0$ when $\alpha$ is not an even integer. See~\cite{ganzburg2003strong} for more details. }
\end{lemma}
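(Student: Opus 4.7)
The plan is to follow Bernstein's classical argument via the Newton divided-difference form, combined with a potential-theoretic estimate of the nodal polynomial $\omega_n$ and a saddle-point estimate of the divided differences of $|x|$.

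First I would reduce the problem to a symmetric one via the change of variables $x = 2p-1$: the function $|p-1/2|$ becomes $|x|/2$, the nodes $\{i/n\}$ become the equispaced nodes $x_k = -1 + 2k/n$ on $[-1,1]$, and the excluded points $p\in\{0,1/2,1\}$ correspond to $x_0\in\{-1,0,1\}$. Since $|x|$ is even and the nodes are symmetric about $0$, the interpolant $L_n[|x|]$ is an even polynomial, so it suffices to establish divergence for arbitrary fixed $x_0\in(0,1)\setminus\{0,1\}$. I would write
\begin{align}
L_n[f](x_0) - L_{n-1}[f](x_0) = f[x_0,x_1,\ldots,x_n]\,\omega_n(x_0),\qquad \omega_n(x) := \prod_{k=0}^{n-1}(x-x_k),
\end{align}
so that showing $|L_n[f](x_0)|\to\infty$ reduces to showing that the increments grow at an exponential rate that dominates any possible sign cancellations.

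Next I would estimate the two factors separately. For $\omega_n(x_0)$, the identity $\tfrac{1}{n}\log|\omega_n(x_0)| = \tfrac{1}{n}\sum_{k}\log|x_0-x_k|$ is a Riemann sum, which for fixed $x_0\in(-1,1)\setminus\{0\}$ converges to the logarithmic potential
\begin{align}
U(x_0) := \tfrac{1}{2}\int_{-1}^{1}\log|x_0-t|\,dt = \tfrac{1}{2}\big[(1+x_0)\log(1+x_0) + (1-x_0)\log(1-x_0)\big] - 1,
\end{align}
whose unique minimum on $[-1,1]$ is $U(0)=-1$, with $U(x_0)>-1$ strictly for $x_0\neq 0$. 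Thus $|\omega_n(x_0)| = e^{n(U(x_0)+o(1))}$. For the divided difference, since the nodes are equispaced with spacing $h=2/n$, it equals $\Delta^n|x|(x_0)/(n!\,h^n)$, and because $|x|$ is linear away from the kink at $0$, the forward difference $\Delta^n|x|$ collects contributions only from binomial terms straddling the kink. An explicit computation combined with Stirling's formula gives $|f[x_0,\ldots,x_n]| = e^{n(1+o(1))}$, matching the reciprocal of the exponential decay $e^{-n}$ that $|\omega_n|$ would exhibit at the singular point $x_0 = 0$.

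Combining the two estimates yields $|f[x_0,\ldots,x_n]\,\omega_n(x_0)| = e^{n(U(x_0)+1+o(1))}$, and by the strict inequality $U(x_0)+1>0$ the increments grow exponentially in $n$. The main obstacle is controlling potential sign cancellations between successive increments; I would handle this by isolating the dominant term in the Lagrange formula $L_n[f](x_0) = \omega_n(x_0)\sum_{k=0}^{n}\frac{(-1)^{n-k}\binom{n}{k}|x_k|}{(x_0-x_k)}\cdot\frac{n^n}{2^n\, n!}$, splitting the sum at $k=n/2$ (where the sign of $|x_k|$ changes), and using Laplace/saddle-point analysis to show that the net sum has magnitude $e^{n(1+o(1))}$ with a definite sign structure. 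The only points where this analysis fails are $x_0\in\{-1,0,1\}$, where either the point is an interpolation node (so the increment vanishes for all large $n$) or $U(x_0)+1=0$; back in $p$-coordinates this gives exactly the exceptional set $\{0,1/2,1\}$ in the statement.
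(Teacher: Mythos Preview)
The paper does not give its own proof of this lemma; it is stated with a citation to Natanson and used as a black box. So there is no in-paper argument to compare against, and I will comment on your sketch on its own merits.

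Your two rate estimates are the right ones and are the heart of Bernstein's argument: for fixed $x_0\in(-1,1)\setminus\{0\}$ one has $|\prod_k(x_0-t_k)|=e^{n(U(x_0)+o(1))}$ with $U(x_0)>-1$, and the divided differences of $|x|$ over the equispaced nodes satisfy $|f[t_0,\ldots,t_n]|=e^{n(1+o(1))}$. However, two points in your write-up are not yet a proof.

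First, the displayed identity $L_n[f](x_0)-L_{n-1}[f](x_0)=f[x_0,x_1,\ldots,x_n]\,\omega_n(x_0)$ is the Newton increment only when the nodes of $L_{n-1}$ are a subset of those of $L_n$. Here the equispaced nodes $\{-1+2k/n\}$ and $\{-1+2k/(n-1)\}$ are not nested, so these increments do not telescope to $L_N[f](x_0)$. (You also reuse $x_0$ for both the evaluation point and the leftmost node, which makes the formula hard to parse.) The clean route is the error representation $f(x_0)-L_n[f](x_0)=f[t_0,\ldots,t_n,x_0]\prod_{k=0}^n(x_0-t_k)$ and a direct analysis of the right-hand side, which is what Natanson does.

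Second, you correctly flag sign cancellation as ``the main obstacle'' but then only say you would handle it by a saddle-point analysis of the Lagrange sum. That is exactly the nontrivial part of the proof: one has to show that the alternating sum $\sum_k(-1)^{n-k}\binom{n}{k}|t_k|/(x_0-t_k)$ does not suffer catastrophic cancellation. In the classical treatment this is done by an explicit partial-fraction computation for the divided difference $f[t_0,\ldots,t_n,x_0]$ of $|x|$, after which a single dominant term of definite sign is isolated. Until that computation is carried out, the exponential growth of the \emph{individual} factors does not yet imply divergence of $L_n[f](x_0)$.
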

%
For more discussions on the convergence/divergence behavior of $L_n[f]$, we refer the readers to~\cite{mastroianni2008interpolation} for more details. We emphasize that it is a highly challenging question. For example, it was shown in~\cite{li1994local} that for any $p\in [0,1]$, we have
\begin{align}
\lim_{n\to \infty} |L_n[f](p) - f(p)| = 0,
\end{align}
where $f(p) = -p\ln p$ or $p^\alpha, \alpha>0, \alpha \notin \mathbb{Z}$, and $L_n[f]$ is the Lagrange interpolation polynomial at equi-distant points. However, to our knowledge it is unknown that whether $\sup_{p\in [0,1]}|L_n[f](p) - f(p)|$ converges to zero as $n\to \infty$ for those specific functions, and if so, what the convergence rate is.  

As Theorem~\ref{thm.bootstraplimit} and Lemma~\ref{lemma.bernsteinbadexample} show, it may not be a wise idea to iterate the bootstrap bias correction too many times. It is both computationally prohibitive, and even may deteriorate statistically along the process. In practice, one usually conducts the bootstrap bias correction a few times. The next theorem provides performance guarantees for the first few iterations of bootstrap bias correction. 

\begin{theorem}\label{thm.bootstrapafew}
Fix the number of iterations $m\geq 0$, and $0<\alpha \leq 2m$. Then the following statements are true for any $f\in C[0,1]$. Here $e_m(p)$ is defined in~(\ref{eqn.bootstraprecurrence}). 
\begin{enumerate}
\item 
\begin{align}
\|e_m(p)\| \lesssim_m  \omega_\varphi^{2m}(f, 1/\sqrt{n}) + \| f \| n^{-m}. 
\end{align}
\item 
\begin{align}
\|e_m(p)\| \lesssim_{\alpha,m} n^{-\alpha/2} & \Leftrightarrow  \omega_\varphi^{2m}(f,t) \lesssim_{\alpha,m} t^\alpha. 
\end{align}
\item 
\begin{align}
\|e_m(p)\|  \ll_m n^{-m} & \Leftrightarrow f\text{ is an affine function}
\end{align}
\item Suppose there is a constant $D < 2^{2m}$ for which 
\begin{align}
\omega_\varphi^{2m} (f,2t) \leq D \omega_\varphi^{2m} (f,t)\quad\text{for }t\leq t_0.
\end{align} 
Then, 
\begin{align}
\| e_m(p) \| \asymp_{m,D} \omega_\varphi^{2m}(f, 1/\sqrt{n}). 
\end{align}
\item For $m = 1$, 
\begin{align}
\| e_m(p) \| \asymp \omega_\varphi^{2m}(f,1/\sqrt{n}). 
\end{align}
\end{enumerate}
\end{theorem}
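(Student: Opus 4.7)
The plan is to translate the problem into the language of iterated Boolean sums of the Bernstein operator. As derived in the excerpt, the bias after $m-1$ rounds of bootstrap correction admits the operator representation $e_m = (I - A_n)^m[f]$, where in the binomial model $A_n$ is precisely the classical Bernstein operator $B_n[f](p) = \mathbb{E}_p f(\hat p_n) = \sum_{k=0}^n f(k/n)\binom{n}{k}p^k(1-p)^{n-k}$. Thus the theorem becomes a statement about how well the iterated Boolean sum $\oplus^m B_n = I - (I - B_n)^m$ approximates $f$ in sup norm, which is squarely in the reach of Ditzian--Totik approximation theory, via the $K$-functional $K_\varphi^{2m}(f, t^{2m}) = \inf_g\{\|f - g\| + t^{2m}\|\varphi^{2m} g^{(2m)}\|\}$ and its well-known equivalence $K_\varphi^{2m}(f, t^{2m}) \asymp_m \omega_\varphi^{2m}(f, t)$.

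For part (1), I would establish two estimates and combine them via the $K$-functional. The first is a stability bound $\|(I - B_n)^m[h]\| \le 2^m \|h\|$, which is immediate since $B_n$ is a positive contraction. The second is a smoothness estimate $\|(I - B_n)^m[g]\| \lesssim_m n^{-m}\|\varphi^{2m} g^{(2m)}\| + n^{-m}\|g\|$ for $g$ with $g^{(2m-1)}$ locally absolutely continuous. This second estimate I would prove by induction on $m$: the base case $m=1$ is a quantitative Voronovskaya-type inequality for the Bernstein operator, and the induction step uses the commutation between $I - B_n$ and differentiation (after weighting by $\varphi^2$), so that each application of $I - B_n$ to a sufficiently smooth function releases a factor $n^{-1}$ at the cost of two weighted derivatives. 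Writing $f = g + (f-g)$, applying stability to $f-g$ and smoothness to $g$, and taking the infimum over $g$ yields
\begin{align*}
\|e_m\| \lesssim_m K_\varphi^{2m}(f, n^{-m}) + n^{-m}\|f\| \asymp_m \omega_\varphi^{2m}(f, 1/\sqrt{n}) + n^{-m}\|f\|,
\end{align*}
which is part (1). The forward direction of part (2) is then immediate, as is the upper half of part (4) once one observes that the doubling condition $\omega_\varphi^{2m}(f,2t) \le D\omega_\varphi^{2m}(f,t)$ with $D < 2^{2m}$ forces $n^{-m}\|f\|$ to be absorbed into $\omega_\varphi^{2m}(f, 1/\sqrt n)$ (because the doubling condition prevents the modulus from decaying faster than $t^\alpha$ for some $\alpha < 2m$).

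For the converse half of parts (2), (4), and all of part (5), I would establish a matching lower bound on $\omega_\varphi^{2m}(f, 1/\sqrt n)$ in terms of $\|e_m\|$. The route is classical: since $\oplus^m B_n[f] = f - e_m$ is a polynomial of degree at most $mn$, one invokes the Bernstein/Markov-type inequality $\|\varphi^{2m} P^{(2m)}\| \lesssim_m n^m \|P\|$ for $P \in \poly_{mn}$ to bound the $K$-functional $K_\varphi^{2m}(f, n^{-m})$ from above by a constant multiple of $\|e_m\|$, and then apply the $K$-functional equivalence to pass back to $\omega_\varphi^{2m}(f, 1/\sqrt n)$. The $m=1$ case of part (5) reduces to the classical Totik characterization $\|f - B_n f\| \asymp \omega_\varphi^2(f, 1/\sqrt n)$, in which the additive $n^{-1}\|f\|$ term of part (1) is automatically absorbed because $B_n$ reproduces constants so $e_1$ depends only on $f$ modulo constants.

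Part (3) is separated out by character. The "if" direction is trivial: $B_n$ reproduces affine functions, so $f$ affine implies $e_m \equiv 0$. For the "only if" direction I would argue by spectral/Voronovskaya analysis. The Bernstein operator has the structure $B_n P_k = \lambda_k^{(n)} P_k$ on a basis $\{P_k\}$ of polynomials, with $\lambda_0^{(n)} = \lambda_1^{(n)} = 1$ and $1 - \lambda_k^{(n)} \sim \binom{k}{2}/n$ for $k \ge 2$. Thus if $f$ has any nontrivial component of degree $\ge 2$, its order-two content survives as a term of exact size $\asymp n^{-m}$ in $e_m$, which can be extracted via a generalized Voronovskaya asymptotic $n^m e_m(p) \to L_m f(p)$ with $L_m$ a differential operator whose kernel is exactly the affine functions. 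Equivalently, using property 5 of Lemma~\ref{lemma.dtmodulusproperties}, $\|e_m\| \ll n^{-m}$ combined with the just-proved equivalence forces $\omega_\varphi^{2m}(f,t)/t^{2m} \to 0$, hence $f$ is a polynomial of degree $\le 2m-1$, and a direct computation on such polynomials using the spectral decomposition of $B_n$ shows that the non-affine part contributes $\asymp n^{-m}$, so $f$ must be affine. The main obstacle I anticipate is the careful bookkeeping in the inductive smoothness estimate in Step 1, since the commutator between $I - B_n$ and multiplication by $\varphi^2$ or differentiation generates lower-order terms that must be absorbed at each step; the converse inequality in Step 2 is standard once the direct estimate is in place.
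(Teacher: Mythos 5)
Your reduction to iterated Boolean sums of the Bernstein operator and your plan for part (1) match the paper's route in spirit: the paper simply cites Gonska et al.\ for parts (1)--(3) and Totik for part (5), and your stability-plus-smoothness decomposition combined through the $K$-functional is exactly the skeleton of the cited direct estimate. The forward half of part (2), the upper half of part (4), and your sketch of part (3) are likewise sound in outline.

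The genuine gap is in your lower bounds. The inequality $\|\varphi^{2m}P^{(2m)}\|\lesssim_m n^m\|P\|$ that you invoke for arbitrary $P\in\poly_{mn}$ is false: the sharp Markov--Bernstein exponent for the weight $\varphi^{2m}$ and $2m$ derivatives on $\poly_n$ is $n^{2m}$ (Chebyshev polynomials saturate it), and with the correct exponent your $K$-functional bound only yields control of $\omega_\varphi^{2m}(f,1/n)$, i.e.\ the wrong scale ($1/n$ instead of $1/\sqrt{n}$) -- and in either case the term $t^{2m}\|\varphi^{2m}P^{(2m)}\|$ reduces to a constant multiple of $\|P\|\approx\|f\|$, which is useless. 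The inequality with exponent $n^m$ that \emph{is} true, $\|\varphi^{2m}(B_k[f])^{(2m)}\|\le Ck^m\|f\|$ (Ditzian--Totik 9.3.5), is a property of the Bernstein operator, not of polynomials of a given degree; and even using it, a single-scale application of the $K$-functional gives only $K_{2m,\varphi}(f,n^{-m})\le\|e_m\|+C\|f\|$. The paper's proof of part (4) therefore works with two scales: it derives
\begin{align}
K_{2m,\varphi}(f,n^{-m})\le\|f-\oplus^{m}B_k[f]\|+C\left(\frac{k}{n}\right)^{m}K_{2m,\varphi}(f,k^{-m})
\end{align}
and closes the loop by setting $n=2^{2q}k$, which is precisely where the hypothesis $D<2^{2m}$ enters. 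Without that hypothesis the converse is a strong converse inequality, known (and cited from Totik) only for $m=1$, which is part (5). A telltale sign that your route cannot work as stated: it would prove $\|e_m\|\asymp_m\omega_\varphi^{2m}(f,1/\sqrt{n})$ for every $f\in C[0,1]$ and every $m$, rendering the doubling hypothesis of part (4) and the restriction to $m=1$ in part (5) vacuous.
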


Theorem~\ref{thm.bootstrapafew} has several interesting implications. First of all, it shows that for a few iterations of the bootstrap bias correction, we have a decent bound on the bias $\| e_m(p) \|$, which is intimately connected with the $2m$-th order Ditzian--Totik modulus of smoothness evaluated at $1/\sqrt{n}$. This bound is tight in various senses. The second statement shows that it captures the bias $\| e_m(p) \|$ at least up to the granularity of the exponent in $n$, and the third statement shows that it is impossible for the bootstrap bias corrected estimator to achieve bias of order lower than $n^{-m}$ except for the trivial case of affine functions, which have bias zero. The fourth statement shows that as long as the modulus $\omega_\varphi^{2m}(f,t)$ is not too close to $t^{2m}$, the DT modulus bound is tight. The fifth statement shows that when we do not do any bias correction, the DT modulus bound is tight for any function in $C[0,1]$. 

The following corollary is immediate given~(\ref{eqn.dtmoduluscomputationexample}). 
\begin{corollary}
If $f(p) = -p\ln p$, then, 
\begin{align}
\| e_m(p) \| \asymp_m \frac{1}{n} \asymp_m \| e_1(p)\|,
\end{align}
If $f(p) = p^\alpha, 0<\alpha<1$, then
\begin{align}
\| e_m(p) \| \asymp_{m,\alpha} \frac{1}{n^\alpha} \asymp_{m,\alpha} \| e_1(p)\|,
\end{align}
which means that the bootstrap bias correction for the first few rounds does not change the order of bias at all. 
\end{corollary}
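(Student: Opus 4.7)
The plan is to treat this as a direct lookup: feed the concrete Ditzian--Totik modulus formulas from~(\ref{eqn.dtmoduluscomputationexample}) into Theorem~\ref{thm.bootstrapafew}, checking the relevant doubling hypothesis along the way.

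The first step is to match each function to the template $x^\delta|\ln(x/2)|^\gamma$ in~(\ref{eqn.dtmoduluscomputationexample}). For $f(p)=-p\ln p$ I would write $-\ln p = |\ln(p/2)|-\ln 2$ on $(0,1]$, so $f(p) = p|\ln(p/2)| - (\ln 2)p$, which is the $(\delta,\gamma)=(1,1)$ template plus a linear remainder whose DT modulus of any order $r\geq 2$ vanishes by property~5 of Lemma~\ref{lemma.dtmodulusproperties}. Since $\delta\in\mathbb{Z}$ and $\gamma=1$, the second branch of~(\ref{eqn.dtmoduluscomputationexample}) yields $\omega_\varphi^r(f,t)\asymp_r t^2$ for every $r\geq 2$. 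For $f(p)=p^\alpha$ with $0<\alpha<1$, I would set $(\delta,\gamma)=(\alpha,0)$, use $\delta\notin\mathbb{Z}$, and apply the first branch of~(\ref{eqn.dtmoduluscomputationexample}) to obtain $\omega_\varphi^r(f,t)\asymp_{r,\alpha} t^{2\alpha}$ for every $r\geq 2>2\alpha$.

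Next I would evaluate these moduli at $t=1/\sqrt{n}$, yielding $\omega_\varphi^{2m}(-p\ln p,1/\sqrt{n})\asymp_m 1/n$ and $\omega_\varphi^{2m}(p^\alpha,1/\sqrt{n})\asymp_{m,\alpha} 1/n^\alpha$ for all $m\geq 1$. Substituting into item~1 of Theorem~\ref{thm.bootstrapafew} gives the upper bounds $\|e_m\|\lesssim_m 1/n$ and $\|e_m\|\lesssim_m 1/n^\alpha$, since in both cases the residual term $\|f\|n^{-m}$ is of equal or smaller order (it is $n^{-m}\le n^{-1}$ for $-p\ln p$, and $n^{-m}\ll n^{-\alpha}$ for $p^\alpha$ since $\alpha<1\le m$). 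For the matching lower bounds I would invoke item~4: the doubling ratio $\omega_\varphi^{2m}(f,2t)/\omega_\varphi^{2m}(f,t)$ is $\asymp 4$ in the first case and $\asymp 2^{2\alpha}$ in the second, both strictly less than $2^{2m}$ whenever $m\ge 2$ in the first case and for all $m\geq 1$ in the second (since $2\alpha<2\leq 2m$). The sole remaining case $m=1$ with $f=-p\ln p$, where the doubling constant fails to beat $2^{2m}=4$ strictly, is covered directly by item~5, which gives $\|e_1\|\asymp\omega_\varphi^{2}(f,1/\sqrt{n})\asymp 1/n$ with no doubling assumption.

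Combining the two sides yields $\|e_m(p)\|\asymp_m 1/n$ for $-p\ln p$ and $\|e_m(p)\|\asymp_m 1/n^\alpha$ for $p^\alpha$; in particular each is $\asymp_m\|e_1(p)\|$ as claimed. The only mild obstacle anywhere is a cosmetic one, namely rewriting $-\ln p$ as $|\ln(p/2)|$ up to an affine summand in the first example; this is disposed of immediately by property~5 of Lemma~\ref{lemma.dtmodulusproperties}. Everything else is a direct substitution into Theorem~\ref{thm.bootstrapafew}.
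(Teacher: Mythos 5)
Your proposal is correct and follows exactly the paper's route: the paper's entire proof of this corollary is the one-line remark that it is immediate from~(\ref{eqn.dtmoduluscomputationexample}) together with Theorem~\ref{thm.bootstrapafew}, and you have simply carried out that substitution, including the affine shift needed to match $-p\ln p$ to the template $x^\delta|\ln(x/2)|^\gamma$ and the item-5 fallback for the $m=1$ case. The one step you state more strongly than you justify is the passage from ``doubling ratio $\asymp 4$ (resp.\ $\asymp 2^{2\alpha}$)'' to the existence of an admissible constant $D<2^{2m}$ in item~4 — that requires control of the implied constants in~(\ref{eqn.dtmoduluscomputationexample}) — but this is precisely the point the paper itself glosses over by declaring the corollary immediate.
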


We have shown that the bias of $\hat{f}_m$ converges to the approximation error of the Lagrange interpolation polynomial at equi-distant points when $m\to \infty$ (Theorem~\ref{thm.bootstraplimit}). However, we also know that for the first few iterations of the bootstrap, the bias of $\hat{f}_m$ can be well controlled~(Theorem~\ref{thm.bootstrapafew}). It begs the question: how does $\| e_m(p)\|$ evolve as $m\to \infty$?

We study this problem through the example of $f(p) =  |p-1/2|$, with the sample size $n = 20$. Thanks to the special structure of the Binomial functions, we are able to numerically compute $\| e_m(p)\|$ up to $m \approx 8.5\times 10^5$. It follows from Theorem~\ref{thm.bootstraplimit} that 
\begin{align}
\lim_{m\to \infty} \| e_m(p)\| = \| f- L_n[f]\| ,
\end{align}
and for $n = 20$, we numerically evaluated $\| f- L_n[f]\|$ to be $ 47.5945$.

\begin{center}
  \centering
  \centerline{\includegraphics[scale = 0.5]{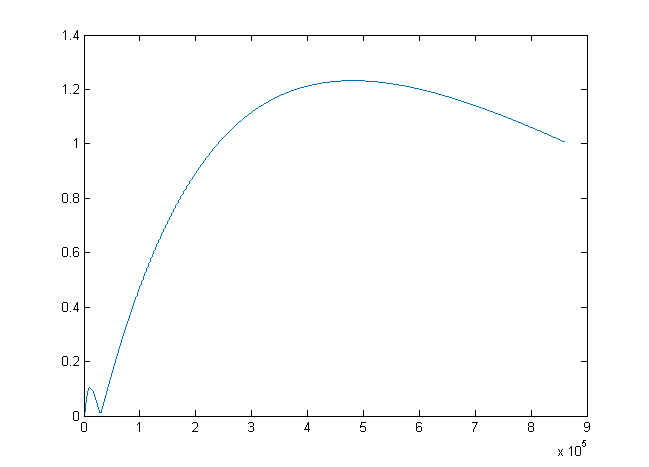}}
\captionof{figure}{The evolution of $\| e_m(p)\|$ as a function of $m$ for $1\leq m\leq 8.5\times 10^5$. }
\label{fig.bootstrapiter}
\end{center}

\begin{center}
  \centering
  \centerline{\includegraphics[scale = 0.5]{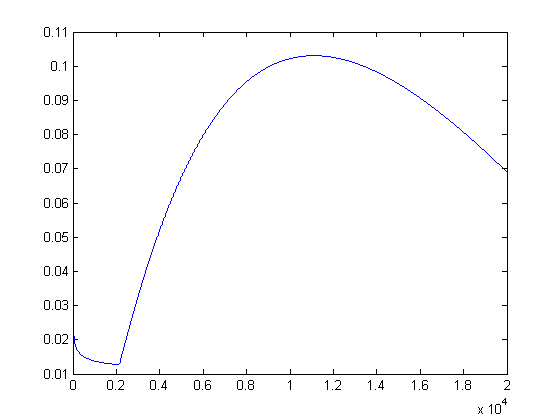}}
\captionof{figure}{The evolution of $\| e_m(p)\|$ as a function of $m$ for $1\leq m\leq 2\times 10^4$.}
\label{fig.bootstrapiterzoomin}
\end{center}

\begin{center}
  \centering
  \centerline{\includegraphics[scale = 0.5]{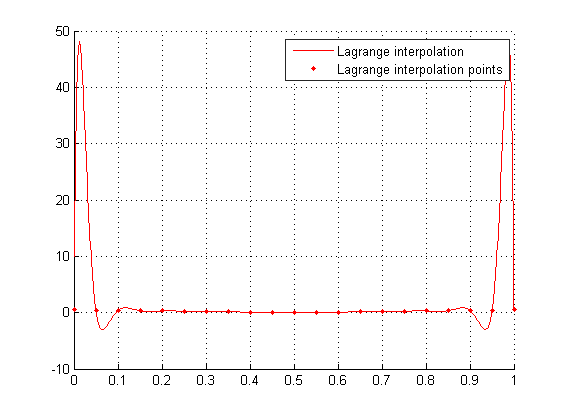}}
\captionof{figure}{The Lagrange interpolation points and the Lagrange interpolation polynomial of the function $|p-1/2|$ with equi-distant $n+1$ points, where $n = 20$. }
\label{fig.lagrangefinalerror}
\end{center}

Figure~\ref{fig.bootstrapiter},~\ref{fig.bootstrapiterzoomin}, and~\ref{fig.lagrangefinalerror} show that the behavior of $\| e_m(p)\|$ could be highly irregular: in fact, for the specific function $f(p) = |p-1/2|$, it continues to decrease until $m$ grows slightly above $2\times 10^3$, and then keeps on increasing until $m$ exceeds about $1.2\times 10^4$, then it continues to drop until it hits about $3\times 10^4$, then it keeps on increasing again within the range of computations we conduct. It is also clear that after about $8.5\times 10^5$ bootstrap iterations, which is by no means practical, $\| e_m(p)\|$ is still far from its limit $\| f- L_n[f]\|$, which is about $47.5945$ as shown in Figure~\ref{fig.lagrangefinalerror}. 

\begin{remark}[Connections between bootstrap and jackknife]
The interested reader must have observed that the bias properties of the bootstrap bias corrected estimator after $r-1$ rounds are the same as that of the $r$-jackknife estimator satisfying Condition~\ref{condi.boundedcoeffcondition}. Concretely, their biases are both dictated by the modulus $\omega_\varphi^{2r}(f,1/\sqrt{n})$. It would be interesting to compare the rate $\omega_\varphi^{2r}(f,1/\sqrt{n})$ with that of the best polynomial approximation, upon noting that in the binomial model, the biases of both the jackknife and bootstrap estimators are polynomial approximation errors of the function $f(p)$ with degree at most $n$. It follows~\cite[Thm. 7.2.1.]{Ditzian--Totik1987} that for best polynomial approximation with degree $n$, the approximation error $\inf_{P \in \poly_n} \sup_{p\in [0,1]} |f(p)- P(p)|$ is upper bounded by $\omega_\varphi^k(f,1/n)$ for \emph{any} $k<n$. We first observe that one achieves a smaller argument ($1/n$ compared to $1/\sqrt{n}$) in this case, but more importantly, there is essentially no restriction on the modulus order when $n$ is large. It indicates the best polynomial approximation induces a much better approximation (smaller bias) for estimating $f(p)$, which, unfortunately has been shown in~\cite{Paninski2003} to fail to achieve the minimax rates in entropy estimation, since the variance explodes while the bias is very small. The estimators in~\cite{Valiant--Valiant2011power,Wu--Yang2014minimax, Jiao--Venkat--Han--Weissman2014maximum} only choose to conduct best polynomial approximation in certain regimes of $f$, which reduces the bias by a logarithmic factor without increasing too much the variance. 
\end{remark}

\subsection{Taylor series bias correction}

The Taylor series can only be applied to functions with certain global differentiablity conditions, which makes it a less versatile method compared to the bootstrap and jackknife. The Taylor series bias correction method exhibits various forms in the literature, and we discuss two of them in this section. We call one approach the \emph{iterative first order correction}, and the other approach the \emph{sample splitting correction}. To illustrate the main ideas behind the methods, we still use the binomial model $n \cdot \hat{p}_n \sim \mathsf{B}(n,p)$. 

\subsubsection{Iterative first order correction}

As shown in~\cite[Chapter 6, Section 1, Pg. 436]{Lehmann--Casella1998theory}, suppose for certain $f$, we have
\begin{align}
\mathbb{E}_p f(\hat{p}_n) - f(p) & = \frac{B_n(p)}{n} + O\left( \frac{1}{n^2} \right),
\end{align}
where $B_n(p) = \frac{1}{2} f''(p) n \mathbb{E}_p (\hat{p}_n - p)^2$. Then, the Taylor series bias corrected estimator is defined as
\begin{align}\label{eqn.orderonetaylorbiascorrection}
\hat{f}_2 & = f(\hat{p}_n) - \frac{B_n(\hat{p}_n)}{n}. 
\end{align}

We can generalize the approach above to conduct bias correction for multiple rounds~\cite{Withers1987}. However, the correction formula becomes increasingly more complicated as the correction order becomes higher. We start with the following lemma.
\begin{lemma}\label{lemma.taylorbiascorrectionbasic}
Suppose function $f: [0,1] \mapsto \mathbb{R}$ satisfies condition $D_s$ with parameter $L$ as in Condition~\ref{condi.conditionds}, where $s = 2k$ is a positive even integer. Then, if $n \cdot \hat{p}_n \sim \mathsf{B}(n,p)$, there exist $k-1$ linear operators denoted as $T_j[f](p),1\leq j\leq k-1$, independent of $n$, such that
\begin{align}
\left | \mathbb{E}_p f(\hat{p}_n) - f(p) - \sum_{j =1}^{k-1} \frac{1}{n^j} T_j[f](p) \right | & \lesssim_{k,L} \frac{1}{n^k}.
\end{align}
Here $\sup_{p\in [0,1]} |T_j[f](p)|  \lesssim_{k,L} 1 $. Concretely, $T_j[f](p)$ is a linear combination of the derivatives of $f$ of order from $j+1$ to $2j$ where the combination coefficients are polynomials of $p$ with degree no more than $2j$.  
\end{lemma}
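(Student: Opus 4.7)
\textbf{Proof proposal for Lemma~\ref{lemma.taylorbiascorrectionbasic}.}
The plan is to start from the Taylor expansion of $f(\hat{p}_n)$ around $p$ to order $2k-1$, take expectations, then repackage the resulting series in powers of $1/n$ using the known $1/n$-structure of the central moments of $\hat{p}_n$. Since $f$ satisfies condition $D_{2k}$ with parameter $L$, $f^{(2k-1)}$ is absolutely continuous on $[0,1]$ and we may write
\begin{align}
f(\hat{p}_n) = f(p) + \sum_{m=2}^{2k-1} \frac{f^{(m)}(p)}{m!}(\hat{p}_n - p)^m + R_{2k}(\hat{p}_n,p),
\end{align}
where $|R_{2k}(\hat{p}_n,p)| \le \frac{\|f^{(2k)}\|_\infty}{(2k)!}|\hat{p}_n - p|^{2k}$ and the $m=1$ term vanishes after taking expectation. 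The binomial $2k$-th central moment satisfies $\EE_p(\hat{p}_n-p)^{2k} \lesssim_k n^{-k}$, so $|\EE_p R_{2k}| \lesssim_{k,L} n^{-k}$.

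The second step is to plug in the polynomial representation of the central moments $\mu_m(n,p) := \EE_p(\hat{p}_n-p)^m$. Using the Romanovsky-type recurrence $\mu_{m+1} = \frac{p(1-p)}{n}\bigl(\mu'_m + m\mu_{m-1}\bigr)$, one checks by induction on $m$ that there exist polynomials $P_{m,j}(p)$ in $p$ (independent of $n$ and of $f$) with $\deg P_{m,j}\le 2j$ and $\|P_{m,j}\|_\infty\lesssim_k 1$ such that, for every $2\le m\le 2k-1$,
\begin{align}
\mu_m(n,p) \;=\; \sum_{j=\lceil m/2\rceil}^{m-1} \frac{P_{m,j}(p)}{n^j}.
\end{align}
The degree bound $\deg P_{m+1,j+1}\le \max(\deg P_{m,j}+1,\deg P_{m-1,j}+2)\le 2(j+1)$ is the key induction step.

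In the third step I would interchange the order of summation to collect terms by powers of $1/n$:
\begin{align}
\sum_{m=2}^{2k-1}\frac{f^{(m)}(p)}{m!}\mu_m(n,p)
= \sum_{j=1}^{2k-2}\frac{1}{n^j}\sum_{m=\max(2,j+1)}^{\min(2j,2k-1)}\frac{f^{(m)}(p)P_{m,j}(p)}{m!}.
\end{align}
For $1\le j\le k-1$ the inner range is exactly $m=j+1,\dots,2j$, so I set
\begin{align}
T_j[f](p) \;:=\; \sum_{m=j+1}^{2j}\frac{f^{(m)}(p)P_{m,j}(p)}{m!},
\end{align}
which is clearly linear in $f$, independent of $n$, built from $f^{(j+1)},\dots,f^{(2j)}$, with polynomial coefficients of degree $\le 2j$, and satisfies $\|T_j[f]\|_\infty\lesssim_{k,L} 1$ by condition $D_{2k}$.

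Finally I collect the leftover contributions $j=k,\dots,2k-2$ together with $\EE_p R_{2k}$ into the remainder; each such leftover is a sum of a bounded number of uniformly bounded functions divided by $n^j\ge n^k$, so the total error is $\lesssim_{k,L} n^{-k}$, establishing the claim. The only slightly delicate step is the degree/boundedness induction for $P_{m,j}$; the rest is bookkeeping in Taylor's theorem and a standard central-moment estimate.
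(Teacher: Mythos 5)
Your proposal is correct and follows essentially the same route as the paper: Taylor expansion to order $2k-1$ with the remainder controlled by the $2k$-th central moment ($\lesssim_k n^{-k}$), substitution of the $1/n$-polynomial structure of $\mathbb{E}_p(\hat{p}_n-p)^m$ (the paper's Lemmas on binomial moments), and regrouping by powers of $1/n$ to define $T_j[f](p)=\sum_{m=j+1}^{2j}\frac{f^{(m)}(p)P_{m,j}(p)}{m!}$, which matches the paper's definition exactly. The only cosmetic difference is that you bound the Taylor remainder directly via its Lagrange/integral form, whereas the paper cites an external result for that step.
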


Now we describe the Taylor series bias correction algorithm below~\cite{Withers1987}. 
\begin{construction}[Taylor series bias correction]~\cite{Withers1987}\label{const.taylorseriesbias}
Define $t_i(p)$ iteratively. Set $t_0(p) = f(p)$, and for $i\geq 1$ define
\begin{align}
t_i(p) & = -\sum_{j = 1}^i T_j[t_{i-j}](p).
\end{align}
The final bias corrected estimator is
\begin{align}
\hat{f}_k & = \sum_{i = 0}^{k-1} \frac{1}{n^i} t_i( \hat{p}_n). 
\end{align}
\end{construction}

Construction~\ref{const.taylorseriesbias} may be intuitively understood as the iterative generalization of the order one Taylor series bias correction~(\ref{eqn.orderonetaylorbiascorrection}). Indeed, after we conduct the first order bias correction and obtain
\begin{align}
\hat{f}_2 & = f(\hat{p}_n) - \frac{T_1[f](\hat{p})}{n} \\
& = t_0(\hat{p}_n) + \frac{t_1(\hat{p}_n)}{n},
\end{align}
we apply Lemma~\ref{lemma.taylorbiascorrectionbasic} to the function $t_0 + \frac{t_1}{n}$ and obtain the expansion up to order $\frac{1}{n^2}$ as
\begin{align}
&t_0(p) + \frac{t_1(p)}{n} + \frac{T_1[t_0](p)}{n} + \frac{T_2[t_0](p)}{n^2} + \frac{T_1[t_1](p)}{n^2} \nonumber \\
& \quad  = t_0(p) +  \frac{T_2[t_0](p)}{n^2} + \frac{T_1[t_1](p)}{n^2},
\end{align}
where we used the definition of $t_1 = -T_1[t_0]$. It naturally leads to the further correction
\begin{align}
t_2(p) = - T_2[t_0](p) - T_1[t_1](p). 
\end{align}
One can repeat this process to obtain the formula in Construction~\ref{const.taylorseriesbias}.

Now we prove that the estimator $\hat{f}_k$ in Construction~\ref{const.taylorseriesbias} achieves bias of order $O(n^{-k})$ if the original function $f$ satisfies condition $D_s$ with $s = 2k$. It can be viewed as one concrete example of~\cite{Withers1987}. 

\begin{theorem}\label{thm.withersbiascorrectionperformance}
Suppose $f: [0,1] \mapsto \mathbb{R}$ satisfies condition $D_s$ with parameter $L$, and $s = 2k, k\geq 1, k\in \mathbb{Z}$. Then, the estimator in Construction~\ref{const.taylorseriesbias} satisfies
\begin{align}
 \| \mathbb{E}_p[\hat{f}_k] - f(p) \| \lesssim_{k,L} \frac{1}{n^k}. 
\end{align} 
\end{theorem}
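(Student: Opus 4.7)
The plan is to apply Lemma~\ref{lemma.taylorbiascorrectionbasic} separately to each of the functions $t_0, t_1, \ldots, t_{k-1}$ appearing in $\hat{f}_k$, expand $\mathbb{E}_p t_i(\hat{p}_n)$ in powers of $1/n$, and observe that the recursive definition $t_\ell = -\sum_{j=1}^{\ell} T_j[t_{\ell-j}]$ forces all intermediate-order terms to telescope, leaving only $t_0(p)=f(p)$ plus a remainder of order $n^{-k}$.

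Concretely, the first step is a smoothness bookkeeping for the auxiliary functions. From Lemma~\ref{lemma.taylorbiascorrectionbasic}, each operator $T_j$ maps a function to a linear combination of its derivatives of orders $j+1, j+2, \ldots, 2j$, with coefficients being polynomials in $p$ of degree at most $2j$. A straightforward induction on $i$ then shows that $t_i(p)$ is a polynomial-coefficient linear combination of the derivatives $f^{(m)}$ for $0\le m\le 2i$, and in particular $t_i$ satisfies condition $D_{2(k-i)}$ with parameter depending only on $k$ and $L$, since $f$ satisfies $D_{2k}$. This ensures that Lemma~\ref{lemma.taylorbiascorrectionbasic} (with the parameter $k$ there replaced by $k-i$) can legally be applied to each $t_i$, giving
\begin{align}
\frac{1}{n^i}\mathbb{E}_p t_i(\hat{p}_n) = \frac{t_i(p)}{n^i} + \sum_{\ell=i+1}^{k-1} \frac{1}{n^{\ell}}\, T_{\ell-i}[t_i](p) + O_{k,L}\!\left(\frac{1}{n^k}\right).
\end{align}

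The second step is the telescoping computation. Summing the above display over $i=0, 1, \ldots, k-1$ and swapping the order of summation in the double sum (setting $j=\ell-i$), the coefficient of $n^{-\ell}$ for any $1\le \ell\le k-1$ is
\begin{align}
t_\ell(p) + \sum_{j=1}^{\ell} T_j[t_{\ell-j}](p) = 0,
\end{align}
by the very definition of $t_\ell$ in Construction~\ref{const.taylorseriesbias}. Only the $i=0$ term $t_0(p)=f(p)$ survives, and summing the $k$ remainder contributions, each of size $O_{k,L}(n^{-k})$, gives the claimed bound $\|\mathbb{E}_p\hat{f}_k - f(p)\| \lesssim_{k,L} n^{-k}$.

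The only genuinely delicate point is the smoothness bookkeeping in the first step: to apply Lemma~\ref{lemma.taylorbiascorrectionbasic} to $t_i$ with $k-i-1$ correction terms and error $n^{-(k-i)}$, the function $t_i$ must satisfy $D_{2(k-i)}$. Since $t_i$ consumes $2i$ derivatives of $f$, the total derivative budget needed is exactly $2i + 2(k-i) = 2k$, which matches the hypothesis $s=2k$ precisely---this is the reason the construction is sharp. Once this bookkeeping is set up, the cancellation identity above is purely algebraic and mirrors the informal derivation of $t_2$ already sketched after Construction~\ref{const.taylorseriesbias}.
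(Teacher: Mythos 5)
Your proposal is correct and follows essentially the same route as the paper's proof: the same inductive smoothness bookkeeping showing $t_i$ satisfies $D_{2(k-i)}$, the same term-by-term application of Lemma~\ref{lemma.taylorbiascorrectionbasic}, and the same swap of summation order to make the definition of $t_\ell$ cancel all intermediate powers of $1/n$. No gaps.
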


\subsubsection{Sample splitting correction}
This method was proposed in~\cite{Han--Jiao--Weissman2016minimax}. It aims at solving one disadvantage of Construction~\ref{const.taylorseriesbias}, which is that the bias correction formula for higher orders may not be easy to manipulate since it is defined through a recursive formula. The \emph{sampling splitting correction} method provides an explicit bias correction formula which is easy to analyze with transparent proofs, but the disadvantage it has is that it only applies to certain statistical models. 

The intuition of the sample splitting correction method is the following, which is taken from~\cite{Han--Jiao--Weissman2016minimax}. Suppose $f$ satisfies condition $D_{2k}$ with parameter $L$. Instead of doing Taylor expansion of $f(\hat{p}_n)$ near $p$, we employ Taylor expansion of $f(p)$ near $\hat{p}_n$:
\begin{align}
f(p) \approx \sum_{i = 0}^{2k-1} \frac{f^{(i)}(\hat{p}_n)}{i!} (p - \hat{p}_n)^i. 
\end{align}

Now, $f^{(i)}(\hat{p}_n)$ is by definition an unbiased estimator for $\mathbb{E}_p [f^{(i)}(\hat{p}_n)]$. However, the unknown $p$ in the right hand side still prevents us from using this estimator explicitly. Fortunately, this difficulty can be overcome by the standard sample splitting approach: we split samples to obtain independent $\hat{p}_n^{(1)}$ and $\hat{p}_n^{(2)}$, both of which follow the same class of distribution (with possibly different parameters) as $\hat{p}_n$. We remark that sample splitting can be employed for divisible distributions, including multinomial, Poisson and Gaussian models \cite{nemirovski2000topics}. Now our bias-corrected estimator is
\begin{align}\label{eq.bias_cor}
\hat{f}_k = \sum_{i=0}^{2k-1} \frac{f^{(i)}(\hat{p}_n^{(1)})}{i!}\sum_{j=0}^i \binom{i}{j}S_j(\hat{p}_n^{(2)})(-\hat{p}_n^{(1)})^{i-j}
\end{align}
where $S_j(\hat{p}_n^{(2)})$ is an unbiased estimator of $p^j$ (which usually exists when sample splitting is doable). Now it is straightforward to show that
\begin{align}
\bE[\hat{f}_k] - f(p) & = \bE_p\left[\sum_{i=0}^{2k-1} \frac{f^{(i)}(\hat{p}_n^{(1)})}{i!}(p-\hat{p}_n^{(1)})^i - f(p) \right] \\
& \lesssim \mathbb{E}_p\left|\frac{ \| f^{(2k)} \| }{(2k)!}(\hat{p}_n^{(1)}-p)^{2k}\right| \\
& \lesssim_{k,L} \frac{1}{n^k},
\end{align}
where in the last step we used the property of the binomial distribution in Lemma~\ref{lemma.centralmomentcoeffbounds}. 

The rest of the paper is organized as follows. Section~\ref{sec.jackknifeothersmain} discusses the key proof ingredients of the results pertaining to jackknife bias correction. The proofs of main results on bootstrap bias correction are provided in Section~\ref{sec.bootstrapmainrest}. Appendix~\ref{sec.kfunctional} reviews the $K$-functional approach for bias analysis. Appendix~\ref{sec.auxlemmas} collects auxiliary lemmas used throughout this paper. Proofs of the rest of the theorems and lemmas in the main text are provided in Appendix~\ref{sec.proofmaintheoremslemmas}, and the proofs of the auxiliary lemmas are presented in Appendix~\ref{sec.proofofauxlemmas}.

\section{Jackknife bias correction}\label{sec.jackknifeothersmain}

\subsection{Theorem~\ref{thm.jackknife_general_rs}}

We first present the proof of Theorem~\ref{thm.jackknife_general_rs}. We explain the roadmap below, and the key lemmas used in roadmap are proved in Appendix~\ref{sec.proofmaintheoremslemmas}. 

The first step to analyze the general $r$-jackknife for functions $f$ satisfying the Condition $D_s$ in Condition~\ref{condi.conditionds} is to use Taylor expansions. It is reflected in Lemma~\ref{lemma.Taylorjackknife}.

\begin{lemma}\label{lemma.Taylorjackknife}
Suppose $f$ satisfies Condition~\ref{condi.conditionds} for fixed $s\geq 1$ with parameter $L$. Then, for the general $r$-jackknife estimator with fixed $r\geq 1$ in Definition~\ref{def.generalrjackknifeestimatordefinition}, 
\begin{align}
|\EE_p \hat{f}_r - f(p)| &\lesssim_{r,s,L} n^{-r} \nonumber \\ 
& \qquad + \int_p^1 \left|\sum_{i=1}^r C_i\EE_p(\hat{p}_{n_i}-t)_+^{s-1}\right|dt \nonumber\\
&\qquad + \int_0^p \left|\sum_{i=1}^r C_i\EE_p(\hat{p}_{n_i}-t)_-^{s-1}\right|dt.
\end{align}
Here the coefficients $\{C_i\}_{1\leq i\leq r}$ are given in Definition~\ref{def.generalrjackknifeestimatordefinition}. 
\end{lemma}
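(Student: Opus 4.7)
}

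My plan is to perform a Taylor expansion of $f$ around $p$ up to order $s-1$ and then carefully handle the two resulting pieces separately: the polynomial part (degree $\le s-1$) will be absorbed into the $n^{-r}$ term using the Vandermonde structure of the coefficients $\{C_i\}$, while the integral remainder will produce the two $(\hat{p}_{n_i}-t)_\pm^{s-1}$ terms after swapping sum, expectation, and integration.

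First I would observe that since $\mathbb{U}_n[f(\hat{p}_{n_i})]$ is an unbiased estimator of $\EE_p f(\hat{p}_{n_i})$, one has $\EE_p \hat{f}_r = \sum_{i=1}^r C_i \EE_p f(\hat{p}_{n_i})$. Under Condition~\ref{condi.conditionds} the function $f^{(s-1)}$ is absolutely continuous with $\|f^{(s)}\|_\infty \le L$, so the integral form of the Taylor remainder gives, for every $x\in[0,1]$,
\begin{align}
f(x) = \sum_{j=0}^{s-1}\frac{f^{(j)}(p)}{j!}(x-p)^j + \frac{1}{(s-1)!}\int_p^x (x-t)^{s-1}f^{(s)}(t)\,dt.
\end{align}
Splitting the remainder by the sign of $x-p$ and extending the integration ranges using $(x-t)_+ = \max(x-t,0)$ and $(x-t)_- = \max(t-x,0)$, one can rewrite
\begin{align}
\frac{1}{(s-1)!}\int_p^x(x-t)^{s-1}f^{(s)}(t)\,dt = \frac{1}{(s-1)!}\int_p^1 (x-t)_+^{s-1}f^{(s)}(t)\,dt + \frac{(-1)^s}{(s-1)!}\int_0^p (x-t)_-^{s-1}f^{(s)}(t)\,dt,
\end{align}
valid for all $x\in[0,1]$ without further case distinction.

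Next I would apply $\sum_{i=1}^r C_i\EE_p[\cdot]$ to the identity above with $x=\hat{p}_{n_i}$, invoke Fubini to move the finite sum and expectation inside the $t$-integrals, and then use $\sup_t|f^{(s)}(t)|\le L$ to pull out the supremum. This yields exactly the two integral expressions appearing on the right-hand side of the lemma, multiplied by a constant depending on $s$ and $L$. It remains to bound the polynomial contribution
\begin{align}
\mathcal{P} \triangleq \sum_{j=0}^{s-1}\frac{f^{(j)}(p)}{j!}\sum_{i=1}^r C_i\EE_p(\hat{p}_{n_i}-p)^j - f(p).
\end{align}

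This is the main obstacle and where the Vandermonde identities carry the argument. The $j=0$ term equals $f(p)\sum_i C_i = f(p)$, which cancels, leaving $j\ge 1$. A standard computation for binomial central moments shows that $\EE_p(\hat{p}_{n_i}-p)^j = \sum_{k=\lceil j/2\rceil}^{j} a_{j,k}(p)\,n_i^{-k}$ with $\|a_{j,k}\|_\infty \lesssim_j 1$. Hence
\begin{align}
\sum_{i=1}^r C_i\EE_p(\hat{p}_{n_i}-p)^j = \sum_{k=\lceil j/2\rceil}^{j} a_{j,k}(p)\sum_{i=1}^r \frac{C_i}{n_i^k}.
\end{align}
By~(\ref{eqn.vonderm}) the inner sum vanishes for $1\le k\le r-1$, so only $k\ge r$ contribute. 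For these $k$, I would recognize that $\sum_{i=1}^r C_i/n_i^k$ is the divided difference of $g(x)=x^{r-1-k}$ at the points $\{n_1,\ldots,n_r\}$, and therefore equals $g^{(r-1)}(\xi)/(r-1)!$ for some $\xi\asymp n$; since $|g^{(r-1)}(\xi)|\lesssim_{r,k}\xi^{-k}$, one obtains $|\sum_i C_i/n_i^k|\lesssim_{r,k} n^{-k}\le n^{-r}$. Summing over the finite ranges $1\le j\le s-1$, $r\le k\le j$ (which is empty when $s\le r$) and using $\|f^{(j)}\|_\infty\le L$ yields $|\mathcal{P}|\lesssim_{r,s,L} n^{-r}$, which combined with the remainder estimate completes the proof.
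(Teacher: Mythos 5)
Your proposal is correct and follows essentially the same route as the paper's proof: Taylor expansion with integral remainder, the Vandermonde identities plus the divided-difference mean value theorem (Lemma~\ref{lemma.higherpower}) to bound the surviving polynomial terms by $n^{-r}$, and a rewriting of the remainder in terms of $(\hat{p}_{n_i}-t)_\pm^{s-1}$. Your single identity with the $(-1)^s$ factor neatly packages the even/odd-$s$ case split the paper carries out explicitly, and the only (harmless) slip is writing the central-moment expansion with powers $n_i^{-k}$ for $k$ up to $j$ rather than $j-1$.
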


Lemma~\ref{lemma.Taylorjackknife} shows that it suffices to analyze the behavior of the quantities $\sum_{i=1}^r C_i\EE_p(\hat{p}_{n_i}-t)_+^{s-1}$ and $\sum_{i=1}^r C_i\EE_p(\hat{p}_{n_i}-t)_-^{s-1}$. These quantities can be viewed as divided differences (see (\ref{eqn.dvdjackknifebias})), and to analyze the worst case we analyze the following backward difference sequences. 

For $t\ge p, u\geq 0, s\geq 0$, define
\begin{align}
A_{n,u}(t) = \EE_p (\hat{p}_n-t)_+^{u}
\end{align}
and consider its $s$-backward difference defined as
\begin{align}
\Delta^s A_{n,u}(t) \triangleq \sum_{k=0}^s (-1)^k\binom{s}{k} A_{n-k,u}(t).
\end{align}
We have
\begin{lemma}\label{lemma.A_nr}
For $s,u\ge 0$, $t\ge p$ and $n\ge 2s$, if $p\le\frac{1}{n}$,
\begin{align}
|\Delta^s A_{n,u}(t)|\le & c_1\cdot
\left(n^{-(u+s-1)}p + n^{-u}p^{u\wedge 1}(\frac{1}{\sqrt{nt}}\wedge 1)\right)\nonumber \\ 
& \cdot \exp(-c_2nt);
\end{align}
if $\frac{1}{n}<p\le \frac{1}{2}$, 
\begin{align}
    |\Delta^s A_{n,u}(t)|\le & c_1\cdot
\left(n^{-(\frac{u}{2}+s)}p^{\frac{u}{2}} + n^{-u}p^{u\wedge 1}(\frac{1}{\sqrt{nt}}\wedge 1)\right)\nonumber \\ 
& \cdot \exp(-\frac{c_2n(t-p)^2}{t});
\end{align}
if $\frac{1}{2}<p\le 1-\frac{1}{n} $,
\begin{align}
    |\Delta^s A_{n,u}(t)|\le & c_1\cdot
\Big(n^{-(\frac{u}{2}+s)}(1-p)^{\frac{u}{2}} + n^{-u}(1-p)^{u\wedge 1} \nonumber \\ 
& \cdot (\frac{1}{\sqrt{n(1-t)}}\wedge 1)\Big) \cdot \exp(-\frac{c_2n(t-p)^2}{1-p}),
\end{align}
where the universal constants $c_1,c_2>0$ only depend on $u,s$ (not on $n$ or $p$). Moreover, if $t>1-\frac{1}{n}$, we have
\begin{align}
|\Delta^s A_{n,u}(t)|\le c_1(1-t)^u(1-p)^sp^{n-s}.
\end{align}
\end{lemma}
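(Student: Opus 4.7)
The lemma quantifies how the binomial moment $A_{n,u}(t)=\mathbb{E}_p(\hat{p}_n-t)_+^u$ depends on $n$ through its $s$-th backward difference. Structurally the bound is a product of a \emph{smoothness} factor (polynomial in $n$ and $p$, gaining an extra $n^{-s}$ relative to the $s=0$ baseline) and an \emph{exponential concentration} factor, which is the standard binomial Chernoff tail whose shape depends on which of the three regimes $p\le 1/n$ (Poisson), $1/n<p\le 1/2$ (left-Gaussian), or $1/2<p\le 1-1/n$ (right-Gaussian) we are in. The auxiliary case $t>1-1/n$ is handled by a direct computation since only the atom $X_n=n$ contributes.

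For the base case $s=0$ I would use the layer-cake identity
\begin{align*}
A_{n,u}(t) = u\int_0^{1-t}v^{u-1}\,\mathbb{P}_p(\hat{p}_n\ge t+v)\,dv
\end{align*}
to reduce to the binomial tail. Inserting the Chernoff bound $\mathbb{P}_p(\hat{p}_n\ge r)\le \exp(-nD(r\|p))$ for $r\ge p$, where $nD(r\|p)\asymp nr\log(nr)$ in the Poisson regime and $nD(r\|p)\asymp n(r-p)^2/p$ in the Gaussian regime, I would split the integral at $v\asymp\sqrt{t/n}\wedge 1$. The piece $v\lesssim\sqrt{t/n}$, where the Chernoff bound has essentially saturated, contributes the boundary term $n^{-u}p^{u\wedge 1}(\tfrac{1}{\sqrt{nt}}\wedge 1)$, while the bulk piece contributes the leading polynomial term (namely $n^{-(u-1)}p$ in the Poisson regime and $n^{-u/2}p^{u/2}$ in the Gaussian regime, with the symmetric $p\leftrightarrow 1-p$ bound in the right-Gaussian case).

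For the $s$-backward difference I would invoke the coupling identity
\begin{align*}
\sum_{k=0}^s(-1)^k\binom{s}{k}\mathbb{E}_p\phi(X_{n-k}) = p^s\,\mathbb{E}_p\,\Delta_j^s\phi(X_{n-s}),
\end{align*}
where $\Delta_j^s$ is the $s$-th forward difference in the integer argument; this follows by telescoping the coupling $X_n=X_{n-s}+Z_1+\cdots+Z_s$ with $Z_i\stackrel{\text{iid}}{\sim}\text{Bern}(p)$. Since $(\hat{p}_{n-k}-t)_+^u=(n-k)^{-u}(X_{n-k}-(n-k)t)_+^u$ depends on $n-k$ through both a smooth factor (the normalization) and a rough factor (the threshold inside the positive part), I would expand $(n-k)^{-u}$ as a polynomial in $k/n$ of degree $s-1$ plus remainder and apply the identity above to each piece: polynomial-in-$k$ contributions of degree $<s$ are annihilated by $\Delta^s$, and the $p^s$ prefactor produced by the identity combines with the Chernoff tail from the base case to supply the advertised $n^{-s}$ gain (using $np\lesssim 1$ in the Poisson regime and $p^s\le 1$ in the Gaussian regimes).

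The main obstacle I foresee is that $x\mapsto (x-t)_+^u$ has limited smoothness at $x=t$ for non-integer $u$, which blocks a naive Taylor expansion of the random integrand. I plan to sidestep this by performing all expansions in the deterministic variable $1/n$ only, and by controlling the forward differences $\Delta_j^s\phi$ that arise after the coupling step via the layer-cake formula from the base case, so that the non-smoothness of $(\cdot)_+^u$ is never differentiated directly. The boundary case $t>1-1/n$ is immediate: $A_{n-k,u}(t)=(1-t)^u p^{n-k}$ for all $0\le k\le s$, and an explicit computation yields $|\Delta^s A_{n,u}(t)|=(1-t)^u(1-p)^s p^{n-s}$ exactly, matching the stated bound.
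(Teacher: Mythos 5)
Your plan for the boundary term, the base case via the layer-cake formula, and the $t>1-\frac{1}{n}$ case are all fine, but the core step --- extracting the $n^{-s}$ gain from the coupling identity --- has a genuine gap. The identity $\sum_{k=0}^s(-1)^k\binom{s}{k}\mathbb{E}_p\phi(X_{n-k})=p^s\,\mathbb{E}_p\bigl[\sum_{j=0}^s(-1)^{s-j}\binom{s}{j}\phi(X_{n-s}+j)\bigr]$ is correct, but only when the \emph{same} function $\phi$ is applied to every $X_{n-k}$. Here the summand is $\phi_k(x)=(n-k)^{-u}\bigl(x-(n-k)t\bigr)_+^u$, which depends on $k$ through both the normalization and the threshold; you only propose to expand the normalization $(n-k)^{-u}$, and you never say how the moving threshold $(n-k)t$ is reconciled with the common-$\phi$ requirement. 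Moreover, the coefficients of your degree-$(s-1)$ expansion of $(n-k)^{-u}$ multiply the $k$-dependent quantity $\mathbb{E}(X_{n-k}-(n-k)t)_+^u$, so they are \emph{not} annihilated by $\Delta^s$; they reduce to lower-order backward differences of that quantity, which is the original problem again.

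More seriously, even after freezing the threshold the gain you invoke is insufficient in the central regime. Take $u=2$, $s=1$, $t=p=\tfrac12$: then $A_{n,2}(\tfrac12)\approx\tfrac12\mathsf{Var}(\hat p_n)=\tfrac{1}{8n}$, so $|\Delta^1A_{n,2}(\tfrac12)|\asymp n^{-2}$, matching the lemma's $n^{-(\frac u2+s)}p^{\frac u2}$. But the frozen-threshold identity gives $n^{-2}p\,\mathbb{E}\bigl[(X_{n-1}+1-\tfrac n2)_+^2-(X_{n-1}-\tfrac n2)_+^2\bigr]\asymp n^{-2}\,\mathbb{E}(X_{n-1}-\tfrac n2)_+\asymp n^{-3/2}$, short of the claim by $\sqrt{n}$ (in general by $(np)^{s/2}$), and ``$p^s\le1$ in the Gaussian regimes'' supplies no gain at all. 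The missing factor comes from a cancellation between the mean $p$ of each coupling increment and the per-step drift $t$ of the threshold, which your decomposition discards. The paper's proof is built precisely to capture this: it Taylor-expands $g_{u,t}(\hat p_{n-k})$ around $\hat p_{n-s}$ in the \emph{normalized} variable, so the backward difference ultimately acts on the deterministic function $n^{-j}$ (giving $|\Delta^{s-t(i)}n^{-j}|\lesssim n^{-(j+s-t(i))}$ by the mean value theorem) while the random factors are centered moments of $\hat p_{n-s}$; the non-differentiability of $(\cdot)_+^u$ at the threshold is quarantined in a Taylor remainder that is nonzero only on $O(1)$ ``good paths'' crossing $t$, whose probability supplies the $n^{-u}p^{u\wedge1}(\frac{1}{\sqrt{nt}}\wedge1)$ boundary term. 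To salvage your route you would need to prove the increment--threshold cancellation directly, e.g.\ by an induction on $s$ with a sharp bound on $\Delta^1\mathbb{E}(X_n-nt)_+^u$, which is essentially the content you are missing.
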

Note that in Lemma \ref{lemma.A_nr}, the case where $p>1-\frac{1}{n}$ has already been included in the case $t>1-\frac{1}{n}$, for $t\ge p$. Hence, Lemma \ref{lemma.A_nr} has completely characterized an upper bound on the dependence of $|\Delta^s A_{n,u}(t)|$ on all $n, p$ and $t$. By symmetry, we have the following corollary regarding
\begin{align}
A_{n,u}^-(t) \triangleq \EE_p(\hat{p}_n-t)_-^u.
\end{align}
\begin{corollary}\label{cor.A_nr}
For $s,u\ge 0$, $t\le p$ and $n\ge 2s$, if $p\ge 1-\frac{1}{n}$,
\begin{align}
    |\Delta^s A_{n,u}^-(t)|\le&  c_1\cdot
\Bigg(n^{-(u+s-1)}(1-p) + n^{-u}(1-p)^{u\wedge 1}\nonumber \\ 
& \cdot (\frac{1}{\sqrt{n(1-t)}}\wedge 1)\Bigg)\cdot \exp(-c_2n(1-t));
\end{align}
if $\frac{1}{2}\le p\le 1-\frac{1}{n}$,
\begin{align}
    |\Delta^s A_{n,u}^-(t)|\le &c_1\cdot
\Bigg(n^{-(\frac{u}{2}+s)}(1-p)^{\frac{u}{2}} + n^{-u}(1-p)^{u\wedge 1}\nonumber \\ 
& \cdot (\frac{1}{\sqrt{n(1-t)}}\wedge 1)\Bigg)\cdot \exp(-\frac{c_2n(t-p)^2}{1-t});
\end{align}
if $\frac{1}{n}\le p<\frac{1}{2}$,
\begin{align}
    |\Delta^s A_{n,u}^-(t)|\le & c_1\cdot
\left(n^{-(\frac{u}{2}+s)}p^{\frac{u}{2}} + n^{-u}p^{u\wedge 1}(\frac{1}{\sqrt{nt}}\wedge 1)\right) \nonumber \\ & \cdot \exp(-\frac{c_2n(t-p)^2}{p});
\end{align}
where the universal constants $c_1,c_2>0$ only depend on $u,s$ (not on $n$ or $p$). Moreover, if $t<\frac{1}{n}$, we have
\begin{align}
|\Delta^s A_{n,u}(t)|\le c_1t^up^s(1-p)^{n-s}.
\end{align}
\end{corollary}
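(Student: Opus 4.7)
The plan is to derive Corollary~\ref{cor.A_nr} as an immediate consequence of Lemma~\ref{lemma.A_nr} by invoking the symmetry $p \leftrightarrow 1-p$ of the binomial model. The key observation is that if $n\hat{p}_n \sim \mathsf{B}(n,p)$, then $n(1-\hat{p}_n)\sim \mathsf{B}(n,1-p)$. Using $(x)_- = (-x)_+$, I write
\begin{align}
A_{n,u}^-(t) = \mathbb{E}_p(\hat{p}_n - t)_-^u = \mathbb{E}_p(t-\hat{p}_n)_+^u = \mathbb{E}_p\bigl((1-\hat{p}_n)-(1-t)\bigr)_+^u.
\end{align}
Setting $\tilde{p} := 1-p$, $\tilde{t} := 1-t$, and $\tilde{p}_n := 1 - \hat{p}_n$ (so $n\tilde{p}_n \sim \mathsf{B}(n,\tilde{p})$), the right-hand side is $\mathbb{E}_{\tilde{p}}(\tilde{p}_n - \tilde{t})_+^u$, which is exactly $A_{n,u}(\tilde{t})$ evaluated at parameter $\tilde{p}$. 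Since the substitution $p\mapsto \tilde p$, $t\mapsto\tilde t$ does not involve $n$, the $s$-backward difference operator $\Delta^s$ in $n$ commutes through, giving
\begin{align}
\Delta^s A_{n,u}^-(t) \;=\; \bigl(\Delta^s A_{n,u}(\tilde{t})\bigr)\bigl|_{p\mapsto \tilde{p}}.
\end{align}

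Next I would check that the hypothesis $t \le p$ of the corollary corresponds to $\tilde{t} \ge \tilde{p}$, which is exactly the hypothesis of Lemma~\ref{lemma.A_nr}. The three regimes $\tilde p\le 1/n$, $1/n<\tilde p\le 1/2$, $1/2<\tilde p\le 1-1/n$ of the lemma then translate, respectively, into $p\ge 1-1/n$, $1/2\le p< 1-1/n$, $1/n\le p<1/2$—matching the three cases stated in Corollary~\ref{cor.A_nr}. The upper bounds transform straightforwardly: $nt\mapsto n(1-t)$, $p^{u/2}\mapsto (1-p)^{u/2}$, $p^{u\wedge 1}\mapsto (1-p)^{u\wedge 1}$, and $(t-p)^2/t\mapsto (p-t)^2/(1-t)$ (respectively $(t-p)^2/(1-p)\mapsto (p-t)^2/p$), yielding exactly the right-hand sides appearing in the corollary.

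The same substitution handles the \emph{moreover} clause: the bound $|\Delta^s A_{n,u}(\tilde t)|\le c_1(1-\tilde t)^u(1-\tilde p)^s \tilde p^{n-s}$ valid for $\tilde t>1-1/n$ translates, via $\tilde t=1-t$ and $\tilde p=1-p$, into $|\Delta^s A_{n,u}^-(t)|\le c_1 t^u p^s(1-p)^{n-s}$ valid for $t<1/n$, which is the stated estimate. There is no substantive technical obstacle—the whole argument is a consistency check that the $p\leftrightarrow 1-p$ reflection sends $A^-$ to $A$ together with its three regimes and boundary case. I would present the proof as a one-line reduction followed by a short verification of the case-by-case correspondence, rather than rerunning the (considerably harder) arguments of Lemma~\ref{lemma.A_nr}.
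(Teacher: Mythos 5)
Your proposal is correct and is exactly the argument the paper intends: the corollary is stated as following from Lemma~\ref{lemma.A_nr} "by symmetry," and your reflection $p\mapsto 1-p$, $t\mapsto 1-t$, $(x)_-=(-x)_+$ together with the case-by-case translation is precisely that symmetry made explicit. (Minor aside: the "moreover" clause in the corollary's statement writes $A_{n,u}$ where it should read $A_{n,u}^-$; your derivation gives the correct bound for $A_{n,u}^-$.)
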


Furthermore, in most cases we do not need the dependence on $p$, and Lemma \ref{lemma.A_nr} implies the following corollary.
\begin{corollary}\label{cor.A_nr_onlyn}
For $s,u\ge 0$, $t\ge p\ge t'$ and $n\ge 2s$, we have
\begin{align}
&|\Delta^s A_{n,u}(t)|\le c_1\big(n^{-(\frac{u}{2}+s)}\exp(-c_2n(t-p)^2)\nonumber \\
&\quad +  n^{-(u+\frac{1}{2})}\cdot \frac{1}{\sqrt{t+n^{-1}}}\exp(-\frac{c_2n(t-p)^2}{t}) \big)\\
&|\Delta^s A_{n,u}^-(t')|\le c_1\big(n^{-(\frac{u}{2}+s)}\exp(-c_2n(t'-p)^2) \nonumber \\ 
&\quad +  n^{-(u+\frac{1}{2})}\cdot \frac{1}{\sqrt{1-t'+n^{-1}}}\exp(-\frac{c_2n(t'-p)^2}{1-t}) \big)
\end{align}
where the universal constants $c_1,c_2>0$ only depend on $u,s$ (not on $n$ or $p$).
\end{corollary}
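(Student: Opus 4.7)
The plan is to derive Corollary~\ref{cor.A_nr_onlyn} from Lemma~\ref{lemma.A_nr} and Corollary~\ref{cor.A_nr} by absorbing the explicit $p$-dependent prefactors into $p$-free expressions, at the acceptable cost of shrinking the exponential decay constants $c_2$. Because the bound on $\Delta^s A_{n,u}^-(t')$ follows from the bound on $\Delta^s A_{n,u}(t)$ by the symmetry $p\leftrightarrow 1-p$, $t'\leftrightarrow 1-t$ (which swaps the roles of $A_{n,u}$ and $A_{n,u}^-$), I focus on $\Delta^s A_{n,u}(t)$ for $t\geq p$. I would handle the four regimes of Lemma~\ref{lemma.A_nr}, namely (i) $p\leq 1/n$, (ii) $1/n<p\leq 1/2$, (iii) $1/2<p\leq 1-1/n$, and (iv) $t>1-1/n$, separately and match the two summands of the Lemma's bound to the two summands of the target bound.

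For the first target summand $n^{-(u/2+s)}\exp(-c_2n(t-p)^2)$, the Lemma's leading coefficient $p^{u/2}$ or $(1-p)^{u/2}$ is bounded by $1$, while in regime (i) $n^{-(u+s-1)}p\leq n^{-(u+s)}\leq n^{-(u/2+s)}$. The Lemma's exponentials are dominated by $\exp(-c_2n(t-p)^2)$ since $t\leq 1$ and $1-p\leq 1$ give $(t-p)^2/t\geq (t-p)^2$ and $(t-p)^2/(1-p)\geq (t-p)^2$, and in regime (i) $nt\geq n(t-p)^2$. For the second target summand $n^{-(u+1/2)}(t+n^{-1})^{-1/2}\exp(-c_2n(t-p)^2/t)$, the key elementary inequality is $(1/\sqrt{nt})\wedge 1\lesssim n^{-1/2}(t+n^{-1})^{-1/2}$, which, combined with $p^{u\wedge 1}\leq 1$, supplies the required prefactor. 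The exponentials match directly in regime (ii); in regime (i) I would split on $t\leq 2/n$ (where $(t-p)^2/t\lesssim 1$ and the target exponential is $\asymp 1$, so the trivial bound $\exp(-c_2nt)\leq 1$ suffices) versus $t>2/n$ (where $t-p\geq t/2$ gives $nt\geq 4n(t-p)^2/t$ and hence $\exp(-c_2nt)\leq\exp(-4c_2n(t-p)^2/t)$ after a constant adjustment).

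The main obstacle is regime (iii), where the Lemma's factor $(1/\sqrt{n(1-t)})\wedge 1$ has the ``wrong'' argument $1-t$ rather than $t$: compared with the target's $1/\sqrt{t+n^{-1}}$, which is $O(1)$ since $t\geq p>1/2$, the Lemma's factor forces a spurious $\sqrt{n}$-loss as $t$ approaches $1$. The rescue uses the exponential: $t\geq p>1/2$ implies $p+t>1$, i.e.\ $1-p<t$, which gives $(t-p)^2/(1-p)\geq (t-p)^2/t$, so the Lemma's exponential is at most the target's. After reparametrizing $q=1-p$, $\tau=1-t$, $\delta=q-\tau\geq 0$, the remaining task is to bound $q^{u\wedge 1}\sqrt{(1-\tau+n^{-1})/(\tau+n^{-1})}\cdot\exp(-c_2n\delta^2(1/q-1/(1-\tau)))$ uniformly in $n,q,\tau,\delta$: when $n\tau\gtrsim 1$ the square-root is $O(1)$ and the claim is trivial; when $n\tau\lesssim 1$, forcing a large pre-exponential ratio requires $q$ of order one and hence $\delta=\Omega(q)$, so $1/q-1/(1-\tau)=\Omega(1)$ produces $\exp(-\Omega(n))$ which beats any polynomial loss. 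Finally, regime (iv) is immediate, as $(1-t)^u(1-p)^sp^{n-s}\leq n^{-u}p^{n-s}$ is exponentially small in $n$ unless $p$ itself is near $1$, in which case $(1-p)^s\leq n^{-s}$ keeps the bound within the first target term.
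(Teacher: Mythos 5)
Your overall strategy --- absorbing the $p$-dependent prefactors of Lemma~\ref{lemma.A_nr} into $p$-free ones at the cost of shrinking $c_2$ --- is exactly what the paper intends (it offers only the one-line assertion that the Lemma ``implies'' the Corollary), and your treatment of regimes (i), (ii) and (iv) is essentially right. The only slip there is that ``$t-p\ge t/2$ gives $nt\ge 4n(t-p)^2/t$'' is reversed (it gives $nt\le 4n(t-p)^2/t$); but the inequality you actually need, $nt\ge n(t-p)^2/t$, is trivially true since $(t-p)^2\le t^2$, so no case split is needed in regime (i) at all.

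The genuine problem is regime (iii), which you correctly identify as the obstacle but do not actually resolve. Write $q=1-p\in[1/n,1/2)$, $\tau=1-t\in[1/n,q]$, $\delta=q-\tau$. Both of your case-defining claims are false: when $n\tau\gtrsim1$ the factor $\sqrt{(1-\tau+n^{-1})/(\tau+n^{-1})}\asymp\tau^{-1/2}$ can still be as large as $\sqrt n$ (take $\tau\asymp1/n$), and a large pre-exponential ratio $q^{u\wedge1}\tau^{-1/2}$ does not force $q=\Theta(1)$ --- at $\delta=0$, $\tau=q=1/n$ the ratio equals $n^{1/2-u\wedge1}$ while the exponential is identically $1$, so there is nothing left to rescue you. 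A correct argument splits instead on $\delta\le q/2$ (where $\tau\ge q/2$ and the ratio is $\le\sqrt2\,q^{u\wedge1-1/2}$) versus $\delta>q/2$ (where the ratio is $\le\sqrt{nq}\,e^{-cnq}\lesssim1$); but this only closes the case $u\ge1/2$. For $0\le u<1/2$ --- in particular $u=0$, which the paper needs in the proof of Theorem~\ref{thm.jackknife_general_rs} when $s=1$ --- the reduction fails outright: at $p=t=1-1/n$ the Lemma's regime-(iii) bound is $\asymp n^{-(u+u\wedge1)}$, which strictly exceeds both target summands $n^{-(u/2+s)}$ and $n^{-(u+1/2)}$ whenever $u<\min\{1/2,2s/3\}$, so no adjustment of the constants $c_1,c_2$ can recover the Corollary from the Lemma there. (The Corollary itself appears true at that point --- e.g.\ $|\Delta^sA_{n,0}(1-1/n)|=p^{n-s}(1-p)^s\asymp n^{-s}$ --- but proving it requires a sharper input than Lemma~\ref{lemma.A_nr}; this is a gap in the paper's one-line justification as well, not only in your write-up.)
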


Now we can start the proof of Theorem~\ref{thm.jackknife_general_rs}. 
\begin{proof}[Proof of Theorem~\ref{thm.jackknife_general_rs}]
We split into three cases. When $s=0$, we use the triangle inequality to conclude that
\begin{align}
|\EE_p \hat{f}_r(\hat{p}_n)-f(p)| &= \left|\sum_{i=1}^r C_i\EE_pf(\hat{p}_{n_i})-f(p)\right|\\
&\le (\sum_{i=1}^r |C_i|+1) \|f\|\\
&\lesssim_L n^{r-1}.
\end{align}

For $1\le s\le 2r$, it follows from Lemma~\ref{lemma.Taylorjackknife} that 
\begin{align}
& |\EE_p \hat{f}_r(\hat{p}_n) - f(p)| \nonumber\\ \lesssim_{r,s,L} & n^{-r} + \int_p^1 \left|\sum_{i=1}^r C_i\EE_p(\hat{p}_{n_i}-t)_+^{s-1}\right|dt \nonumber \\ &
+ \int_0^p \left|\sum_{i=1}^r C_i\EE_p(\hat{p}_{n_i}-t)_-^{s-1}\right|dt.
\end{align}

For $t\ge p$, it follows from Corollary \ref{cor.A_nr_onlyn} that there exist universal constants $c_1,c_2$ depending on $r,s$ only such that
\begin{align}
|\Delta^uA_{n,s-1}(t)|&\le c_1(\frac{1}{\sqrt{t+n^{-1}}}e^{-\frac{c_2n(t-p)^2}{t}}+e^{-c_2n(t-p)^2})\nonumber\\
&\qquad\cdot \begin{cases}
n^{-(\frac{s-1}{2}+u)} &\text{if }0\le u\le \lfloor \frac{s}{2}\rfloor,\\
n^{-(s-\frac{1}{2})} &\text{if }u\ge \lceil \frac{s}{2}\rceil.
\end{cases}
\end{align}

Now define
\begin{align}
B_{n,r,s}(t) = n^{r-1}A_{n,s}(t). 
\end{align}

By the product rule of backward difference we obtain
\begin{align}
& |\Delta^{r-1}B_{n,r,s}(t)| \nonumber \\ 
\lesssim_{r,s} & \sum_{0\le i,j\le r-1,i+j\ge r-1} |\Delta^i n^{r-1}|\cdot |\Delta^j A_{n,s}(t)|\\
\lesssim_{r,s} &\sum_{0\le i,j\le r-1,i+j\ge r-1} n^{r-1-i}\cdot (\frac{1}{\sqrt{t+n^{-1}}}e^{-\frac{c_2n(t-p)^2}{t}}\nonumber \\ 
& \qquad \qquad \qquad +e^{-c_2n(t-p)^2}) \cdot n^{-\min\{\frac{s-1}{2}+j, s-\frac{1}{2}\}}\\
\lesssim_{r,s} & (\frac{1}{\sqrt{t+n^{-1}}}e^{-\frac{c_2n(t-p)^2}{t}}+e^{-c_2n(t-p)^2})\nonumber \\ 
& \cdot n^{-\min\{\frac{s-1}{2},s-r+\frac{1}{2}\}}.
\end{align}

As a result of Lemma~\ref{lemma.meanvaluetheorem},
\begin{align}
&\quad \left|\sum_{i=1}^r C_i\EE_p(\hat{p}_{n_i}-t)_+^{s-1}\right| \nonumber \\ 
&=  \left|\sum_{i=1}^r C_iA_{n_i,s-1}(t)\right|  \\
&= \left|B_{\cdot,r,s}[n_1,\cdots,n_r](t)\right| \\
&\le \frac{1}{(r-1)!}\max_{m\in [n_1,n_r]} |\Delta^r B_{m,r,s}(t)|\\
&\lesssim_{r,s}  (\frac{1}{\sqrt{t+n^{-1}}}e^{-\frac{c_2n(t-p)^2}{t}}+e^{-c_2n(t-p)^2})\nonumber \\ &  \quad \cdot n^{-\min\{\frac{s-1}{2},s-r+\frac{1}{2}\}}.
\end{align}
Using this inequality, finally we arrive at
\begin{align}
&\int_p^1 \left | \sum_{i = 1}^r C_i \mathbb{E}_p (\hat{p}_{n_i}-t)_{+}^{s-1} \right | dt \nonumber\\
&\lesssim_{r,s} \int_p^1 (\frac{1}{\sqrt{t+n^{-1}}}e^{-\frac{c_2n(t-p)^2}{t}}+e^{-c_2n(t-p)^2})\nonumber\\ 
& \quad \cdot n^{-\min\{\frac{s-1}{2},s-r+\frac{1}{2}\}}dt\\
&\le n^{-\min\{\frac{s-1}{2},s-r+\frac{1}{2}\}}\big(\int_0^\infty\frac{1}{\sqrt{u+p+n^{-1}}}\nonumber \\ & \qquad \cdot e^{-\frac{c_2nu^2}{u+p}}du  +\int_0^\infty e^{-c_2nu^2}du \big)\\
&\le n^{-\min\{\frac{s-1}{2},s-r+\frac{1}{2}\}}\big(\int_0^p \frac{1}{\sqrt{p}}e^{-\frac{c_2nu^2}{2p}}du \nonumber \\ & \qquad +\int_p^\infty \sqrt{n}e^{-\frac{c_2nu}{2}}du + \int_0^\infty e^{-c_2nu^2}du \big)\\
&\le n^{-\min\{\frac{s-1}{2},s-r+\frac{1}{2}\}}\big(\frac{1}{\sqrt{p}}\int_0^\infty e^{-\frac{c_2nu^2}{2p}}du \nonumber \\ & \qquad +\sqrt{n}\int_0^\infty e^{-\frac{c_2nu}{2}}du+ \int_0^\infty e^{-c_2nu^2}du \big)\\
&\lesssim_{r,s} n^{-\min\{\frac{s}{2},s-r+1\}}
\end{align}
as desired. The remaining part can be dealt with analogously.

When $s\ge 2r$, the desired result follows from applying Lemma~\ref{lemma.Taylorjackknife} with $s = 2r$. 
\end{proof}

\subsection{Theorem~\ref{thm.jackknife_rs_converse}}

We consider the case where $s>0$ and $s\le 2r-3$. To come up with an example which matches the upper bound in Theorem \ref{thm.jackknife_general_rs}, we first need to prove a ``converse" of Lemma \ref{lemma.A_nr}. Recall that for $t\ge p$, 
\begin{align}
A_{n,u}(t) = \EE_p (\hat{p}_n-t)_+^u.
\end{align}
\begin{lemma}\label{lem.A_nr_converse}
For any $0\le u\le 2(s-1)$, there exists some $p_0>0$ such that for any $0<p<\min\{p_0,\frac{1}{4s}\}$ and any $n\ge \frac{1}{p^2}$, whenever $t\in [p,p+\frac{1}{\sqrt{n}}]$ satisfies 
\begin{enumerate}
\item $\frac{k}{n-s+1}<t<\frac{k-p}{n-s}$ for some $k\in\mathbb{N}$ if $u<s$;
\item $\frac{k}{n-s}<t<\frac{k+p}{n-s}$ for some $k\in\mathbb{N}$ if $u\ge s$,
\end{enumerate}
we have
\begin{align}
|\Delta^s A_{n,u}(t)| \ge cn^{-(u+\frac{1}{2})}
\end{align}
where $c>0$ is a universal constant which only depends on $u,s$ and $p$.
\end{lemma}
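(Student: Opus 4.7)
The overall strategy is to extract from $\Delta^s A_{n,u}(t)$ a single dominant contribution of order $n^{-(u+1/2)}$ that survives the alternating cancellations of the backward difference, using the local central limit theorem for the binomial distribution. Existence of valid $(t,k)$ follows since each interval specified in condition 1 or 2 has width $\asymp p/n$, and as $k$ ranges over integers near $(n-s)p$, the union covers a positive fraction of $[p,p+1/\sqrt{n}]$; moreover, the assumption $p<1/(4s)$ together with $k\asymp (n-s)p$ ensures that no index $k'$ with $|k'-k|\ge 2$ has a membership transition across $\{n-s,\ldots,n\}$, giving a clean decomposition.

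Under condition 1 ($u<s$), the integer $k$ belongs to the support $\{k':k'/n'>t\}$ of $A_{n',u}(t)$ only for $n'=n-s$. I decompose
\[
A_{n',u}(t) = G(n') + \mathbf{1}[n'=n-s]\cdot B,\qquad B := \binom{n-s}{k}p^k(1-p)^{n-s-k}\Big(\tfrac{k}{n-s}-t\Big)^u,
\]
where $G(n')=\sum_{k'\ge k+1}\binom{n'}{k'}p^{k'}(1-p)^{n'-k'}(k'/n'-t)^u$, so that $\Delta^s A_{n,u}(t)=\Delta_n^s G(n)+(-1)^s B$. The local CLT gives $\binom{n-s}{k}p^k(1-p)^{n-s-k}\ge c(p)/\sqrt{n}$, and the interval condition forces $k/(n-s)-t\asymp 1/n$ (with constants depending on $p$); thus $|B|\ge c(p)n^{-(u+1/2)}$.

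Under condition 2 ($u\ge s$), no support transitions occur in the range $\{n-s,\ldots,n\}$, but $A_{n',u}(t)$ still varies nontrivially with $n'$. Writing $X_{n'}/n'-t=(W+d(n'))/n'$ with $W=X_{n'}-k$ and $d(n')=k-n't$ linear in $n'$, I expand
\[
A_{n',u}(t) = n'^{-u}\sum_{\ell=0}^u \binom{u}{\ell} d(n')^\ell M_{u-\ell}(n'),\qquad M_r(n'):=\mathbb{E}[W^r\mathbf{1}(W\ge 1)],
\]
and isolate the term $\ell=u$: since $d(n')^u$ is a polynomial in $n'$ of degree exactly $u\ge s$, $\Delta^s[d(n')^u]$ has leading coefficient $\tfrac{u!}{(u-s)!}(-t)^s d(n')^{u-s}\asymp p^s$; combined with the $n^{-u}$ factor and $\mathbb{P}(X_{n'}\ge k+1)\asymp 1/2$ (since $k\asymp n'p$), this yields a contribution of order $p^s n^{-u}$, bounding $|\Delta^s A_{n,u}(t)|$ below by $c(p)n^{-(u+1/2)}$.

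The main obstacle is to verify that the remaining ``smooth'' contributions are of strictly smaller order than the extracted leading term. For condition 1, I bound $|\Delta_n^s G(n)|$ by rewriting $G$ as a linear combination of binomial tail probabilities $\mathbb{P}(X_{n'-j}\ge k+1-j)$ via identities such as $\mathbb{E}[X\mathbf{1}(X\ge k+1)]=n'p\,\mathbb{P}(X_{n'-1}\ge k)$; since each tail is a Gaussian-like smooth function of $n'$ near its peak, local-CLT/Stirling estimates give $|\Delta^s \mathbb{P}(X_{n'-j}\ge k+1-j)|\lesssim n^{-(s+1)/2}$, so after accounting for the $n^{-u}$ factors the smooth remainder is strictly smaller than $n^{-(u+1/2)}$ throughout $u\le 2(s-1)$. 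For condition 2, analogous estimates bound the lower-order terms $\ell<u$ in the binomial expansion. The triangle inequality then gives $|\Delta^s A_{n,u}(t)|\ge c(p)n^{-(u+1/2)}$.
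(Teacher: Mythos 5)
Your high-level plan (isolate the non-smooth ``boundary'' contribution of size $n^{-(u+1/2)}$ via the local CLT and argue the rest is negligible) is the right one and matches the paper's strategy, but both of your negligibility arguments have genuine gaps. For condition 1, the remainder $G(n')=\sum_{k'\ge k+1}\binom{n'}{k'}p^{k'}(1-p)^{n'-k'}(k'/n'-t)^u$ is \emph{not} smooth enough for your bound: it is a moment truncated at the fixed threshold $k$, and each backward difference in $n'$ picks up a boundary term proportional to $p\,\mathbb{P}(X_{n'-1}=k)\asymp_p n^{-1/2}$ rather than $n^{-1}$. Concretely, $\Delta^s\mathbb{P}(X_{n'}\ge k+1)=(-1)^{s-1}p^s\nabla_k^{s-1}\mathbb{P}(X_{n'-s}=k)\asymp_p n^{-s/2}$, not $n^{-(s+1)/2}$ as you claim; and even with your (incorrect) $n^{-(s+1)/2}$, a coefficient of order $p^u$ would give $n^{-(s+1)/2}$, which is $o(n^{-(u+1/2)})$ only when $u<s/2$, far short of the claimed range $u\le 2(s-1)$. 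In fact $\Delta^s G(n)$ contains contributions of the \emph{same} order $n^{-(u+1/2)}$ as your extracted term $B$, only carrying $p^s$ instead of $p^u$; the paper's proof handles this by computing \emph{all} boundary contributions explicitly (via the coupling and the ``good path'' bookkeeping of Lemma~\ref{lemma.A_nr}) and showing that for $u<s$ the $(-1)^s\Theta(p^u)$ term dominates the $O(p^s)$ terms once $p_0$ is small --- a use of the smallness of $p$ that your argument never makes.

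For condition 2 the problem is worse: your isolated $\ell=u$ term, after $s$ differences, is claimed to contribute $\asymp p^{u} n^{-u}$ (note also $\Delta^s[d(n')^u]\asymp t^s d^{u-s}\asymp p^u$, not $p^s$), but Lemma~\ref{lemma.A_nr} gives the \emph{upper} bound $|\Delta^sA_{n,u}(t)|\lesssim_p n^{-(u/2+s)}+n^{-(u+1/2)}=o(n^{-u})$, so that term must cancel against the $\ell<u$ terms; the functions $M_r(n')=\mathbb{E}[W^r\mathbbm{1}(W\ge1)]$ are themselves non-smooth in $n'$ and their differences are not lower order, so a term-by-term lower bound in that expansion is invalid. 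Relatedly, your proof never actually invokes $u\le 2(s-1)$, which is exactly the condition needed (in the paper) to make the genuinely smooth part $O(n^{-(u/2+s)})=o(n^{-(u+1/2)})$; and the constant in your lower bound for the exact case $u\ge s$ ultimately rests on the non-vanishing of $\Delta^s\sum_{i=1}^r i^u$ (a polynomial of degree $u+1>s$), a cancellation check that your sketch does not perform. To repair the proof you would need to carry out the full decomposition of Lemma~\ref{lemma.A_nr} (smooth polynomial-moment part plus an explicit sum over crossing paths) and lower-bound the explicit path sum, which is what the paper does.
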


Now we start the proof of Theorem~\ref{thm.jackknife_rs_converse}. The basic idea of the proof is to construct functions $f$ such that Lemma~\ref{lemma.Taylorjackknife} is nearly tight. 
\begin{proof}[Proof of Theorem~\ref{thm.jackknife_rs_converse}]
Pick an arbitrary $p>0$ which satisfies Lemma \ref{lem.A_nr_converse}, and we define
\begin{align}
g(t) = \text{sign}\left(\sum_{i=1}^r C_i\EE_p(\hat{p}_{n_i}-t)_+^{s-1}\right)\cdot \mathbbm{1}(t\ge p).
\end{align}
Note that $g$ is not continuous, but we can find some $h\in C[0,1]$ such that $\|g-h\|_1\le n^{-2r}$. Now choose any $f$ with $f^{(s)}=h$ , we know that $f\in C^{s}[0,1]$, and the norm conditions are satisfied under proper scaling.

It follows from Lemma~\ref{lemma.Taylorjackknife} that
\begin{align}
&|\EE_p\hat{f}_r(\hat{p}_n)-f(p)| \nonumber \\ 
\asymp & O(n^{-r}) + \int_p^1 h(t)\left(\sum_{i=1}^r C_i\EE_p(\hat{p}_{n_i}-t)_+^{s-1}\right)dt 
 \nonumber \\ 
 & + \int_0^p h(t)\left(\sum_{i=1}^r C_i\EE_p(\hat{p}_{n_i}-t)_-^{s-1}\right)dt\\
= &O(n^{-r}) + \int_0^1 g(t)\left(\sum_{i=1}^r C_i\EE_p(\hat{p}_{n_i}-t)_+^{s-1}\right)dt\nonumber \\ 
 &  + O(\|g-h\|_1)\cdot \sum_{i=1}^{r}|C_i|\\
=& O(n^{-r})+ \int_p^1 \left|\sum_{i=1}^r C_i\EE_p(\hat{p}_{n_i}-t)_+^{s-1}\right|dt\\
= &o(n^{r-1-s})+ \int_p^1 \left|\sum_{i=1}^r C_iA_{n_i,s-1}(t)\right|dt\\
\ge & o(n^{r-1-s})+ \int_G \left|\sum_{i=1}^r C_iA_{n_i,s-1}(t)\right|dt
\end{align}
where $G\subset [p,p+\frac{1}{\sqrt{n}}]$ is the set of all ``good" $t$'s which satisfy the condition of Lemma \ref{lem.A_nr_converse}. It's easy to see
\begin{align}
m(G) \asymp n^{-\frac{1}{2}}
\end{align}
where $m(\cdot)$ denotes the Lebesgue measure. Moreover, by our choice of delete-1 jackknife, for $t\in G$ we have
\begin{align}
& \left|\sum_{i=1}^r C_iA_{n_i,s-1}(t)\right|\nonumber \\ 
  = &\left|\Delta^{r-1} (n^{r-1}A_{n,s-1}(t))\right|\\
\gtrsim & n^{r-1}|\Delta^{r-1}A_{n,s-1}(t)| \nonumber\\
& - \sum_{1\le i\le r-1,0\le j\le r-1,i+j\ge r-1} |\Delta^{i}n^{r-1}|\cdot |\Delta^jA_{n,s-1}(t)|\\
\gtrsim & n^{r-1-(s-\frac{1}{2})} - \sum_{\substack{1\le i\le r-1,0\le j\le r-1, \\i+j\ge r-1}} n^{r-1-i-\min\{\frac{s-1}{2}+j,s-\frac{1}{2}\}}\\
\gtrsim & n^{r-s-\frac{1}{2}}
\end{align}
where we have used Lemma \ref{lemma.A_nr}, Lemma \ref{lem.A_nr_converse} and the assumption that $0\le s-1\le 2(r-2)$. As a result, we conclude that
\begin{align}
&|\EE_p\hat{f}_r(\hat{p}_n)-f(p)| \nonumber \\  \gtrsim &o(n^{r-1-s})+ \int_G \left|\sum_{i=1}^r C_iA_{n_i,s-1}(t)\right|dt\\
\gtrsim& o(n^{r-1-s}) + m(G)\cdot  n^{r-s-\frac{1}{2}} \\
\gtrsim & n^{r-s-1}
\end{align}
as desired.
\end{proof}

\subsection{Theorem~\ref{thm.specialfunctionslikeentropy}}

The following lemma characterizes the difference $\mathbb{E}_p f(\hat{p}_n) - \mathbb{E}_p f(\hat{p}_{n-1})$ for certain functions. 
\begin{lemma}\label{lemma.specialfunctionstiman}\cite{strukov1977mathematical}
Suppose $f(p) = -p\ln p, p\in [0,1]$. Then, 
\begin{align}
0 \leq \mathbb{E}_p f(\hat{p}_n) - \mathbb{E}_p f(\hat{p}_{n-1}) \leq \frac{1- p^n - (1-p)^n}{n(n-1)}. 
\end{align}
Suppose $f(p) = p^\alpha, p \in [0,1], 0<\alpha<1$. Then, 
\begin{align}
0 \leq  & \mathbb{E}_p f(\hat{p}_n) - \mathbb{E}_p f(\hat{p}_{n-1}) \nonumber \\ 
\leq &  \frac{(1-\alpha)(1-p^n - (1-p)^n)}{n(n-1)^\alpha}. 
\end{align}
\end{lemma}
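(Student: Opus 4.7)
The plan is to reduce both cases to a single coupling identity and then branch into function-specific calculations. Couple $\hat{p}_{n-1}$ with $\hat{p}_n$ so that $n\hat{p}_n = (n-1)\hat{p}_{n-1} + Y$ with $Y\sim\mathsf{Bern}(p)$ independent of the first $n-1$ draws. Writing $S = (n-1)\hat{p}_{n-1}$ and conditioning on $S$ yields
\begin{align*}
\mathbb{E}_p f(\hat{p}_n) - \mathbb{E}_p f(\hat{p}_{n-1}) = \sum_{k=0}^{n-1}\binom{n-1}{k}p^k(1-p)^{n-1-k}\left[pf\left(\tfrac{k+1}{n}\right)+(1-p)f\left(\tfrac{k}{n}\right)-f\left(\tfrac{k}{n-1}\right)\right].
\end{align*}
Since $f(p)=-p\ln p$ and $f(p)=p^\alpha$ (for $0<\alpha<1$) are both concave on $[0,1]$, the classical monotonicity of Bernstein polynomials of concave functions (i.e., $n\mapsto \mathbb{E}_p f(\hat{p}_n)$ is non-decreasing) delivers the lower bound $\mathbb{E}_p f(\hat{p}_n)\geq \mathbb{E}_p f(\hat{p}_{n-1})$ with no further work.

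For the entropy upper bound with $f(p)=-p\ln p$, I would exploit the identity $\binom{n}{k}(k/n)=\binom{n-1}{k-1}$ and the convention $0\ln 0=0$ to obtain the closed form $\mathbb{E}_p f(\hat{p}_n) = p\ln n - p\,\mathbb{E}\ln(B_{n-1,p}+1)$, where $B_{n-1,p}\sim\mathsf{B}(n-1,p)$. Writing $B_{n-1,p}=B_{n-2,p}+Y'$ with $Y'\sim\mathsf{Bern}(p)$ independent, subtraction produces
\begin{align*}
\mathbb{E}_p f(\hat{p}_n)-\mathbb{E}_p f(\hat{p}_{n-1}) = p\ln\frac{n}{n-1} - p^2\,\mathbb{E}\ln\left(1+\frac{1}{B_{n-2,p}+1}\right).
\end{align*}
The sharp upper bound then comes from combining the inequality $\ln(1+x)\geq x-x^2/2$, the moment identity $\mathbb{E}[1/(B_{m,p}+1)] = (1-(1-p)^{m+1})/((m+1)p)$ applied at $m=n-2$, and the second-moment identity for $\mathbb{E}[1/(B+1)(B+2)]$; the residual must be massaged so that the contributions $p\cdot(1-p)^{n-1}/(n-1)$ from the $\ln(1+x)\le x$ step at $p$ near $0$, and the symmetric term $(1-p)\cdot p^{n-1}/(n-1)$ arising from the dual identity at $p$ near $1$, combine into $(1-p^n-(1-p)^n)/(n(n-1))$.

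For $f(p)=p^\alpha$, $0<\alpha<1$, I would use the Frullani-type representation $p^\alpha = \frac{\alpha}{\Gamma(1-\alpha)}\int_0^\infty(1-e^{-tp})t^{-\alpha-1}dt$. Exchanging expectation and integral gives $\mathbb{E}_p(\hat{p}_n)^\alpha = \frac{\alpha}{\Gamma(1-\alpha)}\int_0^\infty[1-\psi_n(t,p)]t^{-\alpha-1}dt$ with $\psi_n(t,p)=(1-p+pe^{-t/n})^n$ the MGF of $\hat{p}_n$, so that $\mathbb{E}_p f(\hat{p}_n)-\mathbb{E}_p f(\hat{p}_{n-1})=\frac{\alpha}{\Gamma(1-\alpha)}\int_0^\infty[\psi_{n-1}(t,p)-\psi_n(t,p)]t^{-\alpha-1}dt$. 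Pointwise estimation of $\psi_{n-1}-\psi_n$ via the mean value theorem in the variable $n$ isolates a factor $(pt/n^2)\cdot\psi_{n-1}$ and a logarithmic correction; the $t^{1-\alpha}$ integral converts via $\int_0^\infty t^{-\alpha}e^{-ct}dt=\Gamma(1-\alpha)c^{\alpha-1}$, producing the constant $\Gamma(2-\alpha)/\Gamma(1-\alpha)=1-\alpha$. Tracking the boundary behavior at $p=0$ and $p=1$ (where $\psi_n(t,\cdot)\to e^{-t},1$) yields the factor $1-p^n-(1-p)^n$ in exactly the same way as in the entropy case.

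The main obstacle is the precise form of the numerator $1-p^n-(1-p)^n$, which equals $\mathbb{P}(1\leq B_{n,p}\leq n-1)$ and vanishes at both endpoints; naive bounds such as $p\ln(n/(n-1))\leq p/(n-1)$ or $\ln(1+x)\leq x$ are off by exactly the exponentially small corrections $p(1-p)^{n-1}$ and $(1-p)p^{n-1}$ that one must preserve. The bookkeeping is consequently delicate, and I would follow the calculations of Strukov--Timan~\cite{strukov1977mathematical} rather than reinvent them, since the referenced result is precisely this tight two-sided estimate.
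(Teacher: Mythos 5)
The paper does not actually prove this lemma: it is imported verbatim from Strukov--Timan \cite{strukov1977mathematical} as a cited auxiliary result, so there is no in-paper argument to compare yours against. Judged on its own terms, your proposal is correct and clean for the lower bound (the concavity of $-p\ln p$ and $p^\alpha$ together with the classical monotonicity $B_{n-1}[f]\le B_n[f]$ of Bernstein operators for concave $f$ settles $\ge 0$ immediately), and the reduction $\mathbb{E}_p f(\hat p_n)=p\ln n-p\,\mathbb{E}\ln(B_{n-1,p}+1)$ is a genuinely nice starting point for the entropy case.

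The upper bounds, however, have a real gap. Carrying out exactly the steps you list for the entropy case --- $p\ln\frac{n}{n-1}\le\frac{p}{n-1}$, then $\ln(1+x)\ge x-\frac{x^2}{2}$, then $\mathbb{E}[1/(B_{n-2,p}+1)]=\frac{1-(1-p)^{n-1}}{(n-1)p}$ and $\frac{1}{(B+1)^2}\le\frac{2}{(B+1)(B+2)}$ --- lands you at $\frac{1-(1-p)^n}{n(n-1)}$, which is strictly weaker than the claimed $\frac{1-p^n-(1-p)^n}{n(n-1)}$. The missing $-p^n$ does not appear as an additional additive ``symmetric term at $p$ near $1$'': you are computing a single quantity, not summing two regional contributions, and $-p\ln p$ is not symmetric under $p\mapsto 1-p$, so it is not clear what ``dual identity'' would be invoked or how its output would be combined with the first computation rather than replace it. The $p^\alpha$ case is even less developed: the mean-value step on $\psi_{n-1}-\psi_n$, the emergence of the specific denominator $n(n-1)^\alpha$, and the numerator $1-p^n-(1-p)^n$ are all asserted rather than derived. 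Since you ultimately state that you would ``follow the calculations of Strukov--Timan rather than reinvent them,'' the proposal for the hard half of the lemma reduces to the same citation the paper already makes; as a self-contained proof it is incomplete. If the exact constants are not needed downstream (the paper only uses $0\le H_j-H_{j-1}\lesssim j^{-2}$ in Theorem~\ref{thm.specialfunctionslikeentropy}), your chain does legitimately deliver that weaker conclusion for the entropy case.
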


The next lemma characterizes the lower bound for the bias of the jackknife estimate $\hat{f}_2$. 
\begin{lemma}\label{lemma.jackknifelowerboundspeicalfunctions}
Suppose $\hat{f}_2$ is the delete-$1$ $2$-jackknife. Then,
\begin{enumerate}
\item for $f(p) = -p\ln p$, 
\begin{align}
\| \mathbb{E}_p \hat{f}_2 - f(p) \| \gtrsim \frac{1}{n}.
\end{align}
\item for $f(p) = p^\alpha, 0<\alpha<1$, 
\begin{align}
\| \mathbb{E}_p \hat{f}_2 - f(p) \| \gtrsim \frac{1}{n^\alpha}.
\end{align}
\end{enumerate}

Suppose $\hat{f}_2$ is a $2$-jackknife that satisfies Condition~\ref{condi.boundedcoeffcondition}. Then, 
\begin{enumerate}
\item for $f(p) = -p\ln p$, 
\begin{align}
\| \mathbb{E}_p \hat{f}_2 - f(p) \| \gtrsim \frac{1}{n}.
\end{align}
\item for $f(p) = p^\alpha, 0<\alpha<1$, 
\begin{align}
\| \mathbb{E}_p \hat{f}_2 - f(p) \| \gtrsim \frac{1}{n^\alpha}.
\end{align}
\end{enumerate}
\end{lemma}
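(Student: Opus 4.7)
I will handle the two clauses of the lemma by quite different arguments.

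The second clause---the general $2$-jackknife satisfying Condition~\ref{condi.boundedcoeffcondition}---is immediate: it is the $r=2$ case of Corollary~\ref{cor.goodjackknifefailentropy}, which is itself the matching $\asymp$ statement in Theorem~\ref{thm.jackknife} item~3 combined with the Ditzian--Totik modulus computation~(\ref{eqn.dtmoduluscomputationexample}). Indeed $\omega_\varphi^{4}(-p\ln p,t)\asymp t^2$ and $\omega_\varphi^{4}(p^\alpha,t)\asymp t^{2\alpha}$, and both satisfy the doubling condition $\omega_\varphi^{4}(f,2t)\le D\,\omega_\varphi^{4}(f,t)$ with $D<2^4$, so the matching lower bounds $\gtrsim 1/n$ and $\gtrsim 1/n^\alpha$ drop out.

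The first clause---the delete-$1$ $2$-jackknife $\hat f_2=nf(\hat p_n)-(n-1)f(\hat p_{n-1})$---is the substantive content. My plan is to evaluate the bias at a well-chosen point $p=c/n$ and read off its leading order via the binomial-to-$\mathsf{Poi}(c)$ limit. Setting $B_m(p):=\mathbb{E}_pf(\hat p_m)$, the bias is $f(p)-nB_n(p)+(n-1)B_{n-1}(p)$, and the coupling $X=X'+Z$ with $X\sim\mathsf{B}(n,p)$, $X'\sim\mathsf{B}(n-1,p)$ and $Z\sim\mathsf{Bern}(p)$ independent of $X'$ supplies the key identity $\mathbb{E}[g(X)-g(X')]=p\,\mathbb{E}[g(X'+1)-g(X')]$ linking the two sample sizes for any $g$.

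For $f(p)=-p\ln p$ I take $c=1$. A short simplification using $\mathbb{E}[X]=1$ and $\mathbb{E}[X']=(n-1)/n$ gives
\begin{align*}
f(1/n)-\mathbb{E}_{1/n}\hat f_2=\tfrac{n-1}{n}\ln\bigl(1-\tfrac{1}{n}\bigr)+\bigl(\mathbb{E}[X\ln X]-\mathbb{E}[X'\ln X']\bigr);
\end{align*}
the first summand is $-1/n+O(1/n^2)$, and the coupling identity together with the Poisson identities $\mathbb{E}[Yh(Y)]=\mathbb{E}[h(Y+1)]$ and $\mathbb{E}[Y(Y-1)h(Y)]=\mathbb{E}[h(Y+2)]$ (at $\lambda=1$) reduces the second summand to $\frac{1}{n}\mathbb{E}[\ln(Y+2)]+o(1/n)$ with $Y\sim\mathsf{Poi}(1)$. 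A four-term finite-sum lower bound $\mathbb{E}[\ln(Y+2)]\ge e^{-1}(\ln 2+\ln 3+\tfrac{1}{2}\ln 4+\tfrac{1}{6}\ln 5)>1$ then delivers a bias of exact order $1/n$. For $f(p)=p^\alpha$ I instead take $c$ small: since $M(c):=\mathbb{E}_{Y\sim\mathsf{Poi}(c)}[Y^\alpha]=c+O(c^2)$ as $c\to 0^+$, both $nB_n(c/n)$ and $(n-1)B_{n-1}(c/n)$ are $\approx n^{1-\alpha}M(c)$ to leading order in $n$ with difference of order $cn^{-\alpha}$, whereas $f(c/n)=c^\alpha n^{-\alpha}$; the domination $c^\alpha\gg c$ as $c\to 0^+$ means that choosing $c=c(\alpha)>0$ small enough so that the jackknife correction cannot cancel the $c^\alpha$ contribution yields a bias $\gtrsim c^\alpha/n^\alpha\gtrsim_\alpha 1/n^\alpha$.

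The main obstacle will be making the Poisson approximation quantitative: one must show the sub-leading corrections (from the non-Poisson features of $\mathsf{B}(n,p)$ and from higher-order terms in $c$) are of order $o(1/n)$, respectively $o(1/n^\alpha)$, so that the named leading constants $\mathbb{E}[\ln(Y+2)]-1>0$ and $c^\alpha$ genuinely dominate. Both auxiliary functions $g$ arising above---namely $g(x)=(x+1)\ln(x+1)-x\ln x$ and $g(x)=x^\alpha$---are of at most polylogarithmic or sub-linear growth, so a standard direct coupling or Chen--Stein bound should suffice, leaving only routine bookkeeping.
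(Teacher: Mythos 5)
Your proposal is correct and follows essentially the same route as the paper: the bounded-coefficients clause is dispatched by Theorem~\ref{thm.jackknife} together with~(\ref{eqn.dtmoduluscomputationexample}), and the delete-$1$ clause is handled by localizing at $p=c/n$, passing to the $\mathsf{Poi}(c)$ limit, and verifying that the limiting constant in front of $n^{-1}$ (resp.\ $n^{-\alpha}$) is nonzero. The only variation is in how that constant is certified for $f(p)=-p\ln p$: the paper takes $c\to 0^+$ and compares growth rates ($c+c\ln c$ versus $O(c^2)$), whereas you fix $c=1$ and bound $\mathbb{E}[\ln(Y+2)]>1$ numerically via the Poisson identity $\mathbb{E}[Yh(Y)]=\mathbb{E}[h(Y+1)]$; both are valid, and your quantification of the Poisson approximation error is left at the same (routine, uniformly-integrable) level of detail as the paper's.
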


Now we can start the proof of Theorem~\ref{thm.specialfunctionslikeentropy}. 
\begin{proof}[Proof of Theorem~\ref{thm.specialfunctionslikeentropy}]
The lower bounds follow from Lemma~\ref{lemma.jackknifelowerboundspeicalfunctions}. Now we prove the upper bounds. 

For a general $2$-jackknife and general function $f$, we have
\begin{align}
\mathbb{E}_p [\hat{f}_2] - f(p) & = \frac{n_1}{n_1 - n_2} \left( \mathbb{E}_p[f(\hat{p}_{n_1})] - f(p) \right) \nonumber \\ 
& \quad + \frac{n_2}{n_2 - n_1} \left( \mathbb{E}_p [f(\hat{p}_{n_2})] - f(p) \right). 
\end{align}

Define $H_n = \mathbb{E}_p[f(\hat{p}_n)] -f(p)$. Then,
\begin{align}
\mathbb{E}_p [\hat{f}_2] - f(p)  & = \frac{n_2 H_{n_2} - n_1 H_{n_1}}{n_2 - n_1} \\
& = H_{n_2} + \frac{n_1}{n_2-n_1} \left( H_{n_2} - H_{n_1} \right). 
\end{align}

For any $f\in C[0,1]$, we have $\lim_{n\to \infty} H_n = 0$, which implies
\begin{align}
H_{n_2} & = H_{n_2} - H_\infty \\
& = \sum_{j = n_2}^\infty \left( H_j - H_{j+1} \right)
\end{align}
and 
\begin{align}
H_{n_2} -H_{n_1} & = \sum_{j = n_1+1}^{n_2} \left( H_{j} - H_{j-1} \right) 
\end{align}
It follows from Lemma~\ref{lemma.jackknifelowerboundspeicalfunctions} that for $f(p) = -p\ln p$, $0\leq H_{j} - H_{j-1} \lesssim \frac{1}{j^2}$. Hence, 
\begin{align}
\| \mathbb{E}_p [\hat{f}_2] - f(p) \| & \lesssim \sum_{j = n_2}^\infty \frac{1}{j^2} +  \frac{n_1}{n_2 - n_1} \cdot \left(\sum_{j = n_1+1}^{n_2} \frac{1}{j^2} \right) \\
& \lesssim \frac{1}{n_2} + \frac{n_1}{n_2 - n_1} \left( \frac{1}{n_1} - \frac{1}{n_2} \right) \\
& \lesssim \frac{1}{n_2} \\
& \lesssim \frac{1}{n},
\end{align}
where in the last step we used Definition~\ref{def.generalrjackknifeestimatordefinition}. 

The case of $f(p) = p^\alpha, 0<\alpha<1$ can be proved analogously. 
%
%

%
%
\end{proof}

\section{Bootstrap bias correction} \label{sec.bootstrapmainrest}

\subsection{Theorem~\ref{thm.bootstraplimit}}

Define the Bernstein operator $B_n: C[0,1] \mapsto C[0,1]$ as 
\begin{align}
B_n[f](p) = \sum_{i = 0}^n f\left(\frac{i}{n} \right) {n \choose i}  p^i (1-p)^{n-i}. 
\end{align}

Theorem~\ref{thm.bootstraplimit} can be proved using the eigenstructure of the Bernstein operator~\cite{sevy1995lagrange}. We give a concrete proof as follows. 

It was shown in~\cite{cooper2000eigenstructure} that the Bernstein operator $B_n[f]$ admits a clean eigenstructure. Concretely, it has $n+1$ linearly independent eigenfunctions $p_k^{(n)}, 0\leq k \leq n$, which are polynomials with order $k$, with $k$ simple zeros on $[0,1]$. The corresponding eigenvalues are $\lambda_k^{(n)} = \frac{1}{n^k} \frac{n!}{(n-k)!}$. The Bernstein operator is also degree reducing in the sense that it maps a $k$-degree polynomial to another polynomial with degree no more than $k$. 

Decomposing $f(p)$ as $f(p) = L_n[f](p) + g(p)$. It follows the definition of $L_n[f]$ that $g(i/n) = 0, 0\leq i \leq n$. Hence, $B_n[g] \equiv 0$. 

Since $L_n[f](p)$ is a polynomial with degree no more than $n$, it admits a unique expansion 
\begin{align}
L_n[f](p) = \sum_{k = 0}^n a_k p_k^{(n)}. 
\end{align}

Applying $I-B_n$ on the decomposition of $f$, we have
\begin{align}
(I-B_n)[f] & = f - B_n[f] \\
& = \sum_{k = 0}^n a_k p_k^{(n)} + g - \sum_{k = 0}^n a_k \lambda_k^{(n)} p_k^{(n)}\\
& = \sum_{k = 0}^n a_k (1-\lambda_k^{(n)}) p_k^{(n)} + g
\end{align}

It follows by induction that 
\begin{align}
e_m(p) = \sum_{k = 0}^n a_k (1-\lambda_k^{(n)})^{m} p_k^{(n)} + g
\end{align}

We have
\begin{align}
\sup_p |e_m(p) - g| & \leq \sum_{k = 0}^n  (1-\lambda_k^{(n)})^{m} |a_k| \sup_p |p_k^{(n)}| \\
& \to 0\quad \text{ as }m \to \infty. 
\end{align}

Note that $\lambda_k^{(n)} = 1 \cdot \left( 1- \frac{1}{n} \right) \ldots  \left( 1- \frac{k-1}{n} \right), k\geq 2$, and $\lambda_k^{(n)} = 1$ when $k = 0,1$. Hence, the smallest $\lambda_k^{(n)}$, which is the slowest to vanish corresponds to $k = n$. Since
\begin{align}
\lambda_n^{(n)} = \frac{n!}{n^n} \approx \frac{\sqrt{2\pi n}}{e^n},
\end{align}
it takes $m \gtrsim \frac{e^n}{\sqrt{n}}$ rounds of iteration to make $(1-\lambda_n^{(n)})^m$ vanish, which is a prohibitively large number in practice.

\section{Acknowledgment}

We are grateful to Yihong Wu for pointing out the the connection of Theorem~\ref{thm.boundentropy} with $f$-divergence inequalities~\cite{sason2016f}. 

\appendices

\section{The $K$-functional approach to bias analysis}\label{sec.kfunctional}

We introduce the $r$-th modulus of smoothness of a function $f:[0,1]\mapsto \mathbb{R}$ as
\begin{align}
\omega^r(f,t) = \sup_{0<h\leq t} \| \Delta^r_{h} f \|,
\end{align}
where $\Delta^r_h f$ is defined in~(\ref{eqn.rsymmetricdifferencedef}).  The $r$-th Ditzian--Totik modulus of smoothness of a function $f: [0,1] \mapsto \mathbb{R}$ is defined in~(\ref{eqn.dtmodulusdefinition}).

Intuitively, the smoother the function is, the smaller is its moduli of smoothness. For $f\in C[0,1]$, we define the $K$-functional $K_r(f,t^r)$ as follows:
\begin{align}
K_r(f,t^r) = & \inf_g \{ \| f - g \| + t^r \|  g^{(r)} \|: \nonumber \\ 
& \quad  g^{(r-1)} \in \text{A.C.}_{\text{loc}} \},
\end{align}
and the $K$-functional $K_{r,\varphi}(f, t^r)$ as
\begin{align}
K_{r,\varphi}(f, t^r) = & \inf_g \{ \| f - g \| + t^r \| \varphi^r g^{(r)} \|: \nonumber \\ 
& \quad g^{(r-1)} \in \text{A.C.}_{\text{loc}} \},
\end{align}
where $g^{(r-1)} \in \text{A.C.}_{\text{loc}}$ means that $g$ is $r-1$ times differentiable and $g^{(r-1)}$ is absolutely continuous in every closed finite interval $[c,d] \subset (0,1)$.  

The remarkable fact is, the $K$-functionals are equivalent to the corresponding moduli of smoothness for \emph{any} function $f\in C[0,1]$. Concretely, we have the following lemma:

\begin{lemma}\cite[Chap. 6, Thm. 2.4, Thm. 6.2]{devore1993constructive}\cite[Thm. 2.1.1.]{Ditzian--Totik1987}\label{lemma.kfunctionalequiva}
There exist constants $c_1>0,c_2>0$ which depend only on $r$ such that for all $f\in C[0,1]$, 
\begin{align}
c_1 \omega^r(f,t) & \leq K_r(f,t^r)  \leq c_2 \omega^r(f,t) \quad \text{for all }t>0\\
c_1 \omega^r_{\varphi}(f,t) & \leq K_{r,\varphi}(f,t^r)  \leq c_2 \omega^r_{\varphi}(f,t) \quad \text{for all }t\leq t_0,
\end{align}
where $t_0>0$ is a constant that only depends on $r$. 
\end{lemma}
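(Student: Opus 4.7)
The plan is to establish the equivalence in both directions separately, since the upper bound of the modulus by the $K$-functional (the easy direction) requires only a decomposition argument, while the reverse bound requires an explicit construction of a smooth approximant. I will sketch both and indicate where the main technical difficulty lies in the Ditzian--Totik case.

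For the easy direction $\omega^r(f,t) \lesssim K_r(f,t^r)$, I would fix any $g$ with $g^{(r-1)} \in \text{A.C.}_{\text{loc}}$ and split $\Delta^r_h f = \Delta^r_h(f-g) + \Delta^r_h g$. The first summand is bounded by $2^r \|f-g\|$ via the triangle inequality applied to the definition \eqref{eqn.rsymmetricdifferencedef}. For the second summand I would use the standard representation of the symmetric difference as an $r$-fold iterated integral,
\begin{align*}
\Delta^r_h g(x) = \int_{-h/2}^{h/2}\!\!\cdots\!\!\int_{-h/2}^{h/2} g^{(r)}\!\left(x+\sum_{j=1}^r u_j\right) du_1\cdots du_r,
\end{align*}
which immediately gives $\|\Delta^r_h g\| \le h^r\|g^{(r)}\|$. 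Combining and infimising over $g$ yields the desired bound. The same argument works for the DT version after replacing the step $h$ by $h\varphi(x)$ and noting that the integrand is supported in a neighborhood of $x$ whose $\varphi$-values are comparable to $\varphi(x)$ (this is where $t\le t_0$ comes in, to keep the averaging inside $[0,1]$), yielding $|\Delta^r_{h\varphi(x)}g(x)| \lesssim h^r\varphi(x)^r \|\varphi^r g^{(r)}\|/\varphi(x)^r \cdot \varphi(x)^r$, i.e.\ $\lesssim h^r\|\varphi^r g^{(r)}\|$.

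For the reverse direction $K_r(f,t^r) \lesssim \omega^r(f,t)$, the task is to produce, for each $t$, an explicit candidate $g=g_t$ with $\|f-g_t\|\lesssim \omega^r(f,t)$ and $\|g_t^{(r)}\| \lesssim t^{-r}\omega^r(f,t)$. The classical choice is an $r$-fold iterated Steklov mean: set
\begin{align*}
g_t(x) = -\frac{1}{t^r}\int_{-t/2}^{t/2}\!\!\cdots\!\!\int_{-t/2}^{t/2}\sum_{k=1}^{r}(-1)^{k}\binom{r}{k} f\!\left(x + k\,\frac{u_1+\cdots+u_r}{r}\right) du_1\cdots du_r,
\end{align*}
so that $f(x)-g_t(x)$ is, by construction, a normalized average of the $r$-th differences $\Delta^r_{(u_1+\cdots+u_r)/r} f$, bounded by $\omega^r(f,t)$. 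Differentiating $r$ times under the integral sign produces a factor $t^{-r}$ multiplying a similar $r$-th difference, yielding the required bound on $\|g_t^{(r)}\|$. This handles the unweighted case.

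The main obstacle is the DT version: the weight $\varphi$ degenerates at $0$ and $1$, so a uniform Steklov mean at scale $t$ will not give $\|\varphi^r g_t^{(r)}\|\lesssim t^{-r}\omega_\varphi^r(f,t)$. The fix, due to Ditzian--Totik, is to use a position-dependent averaging scale of order $t\varphi(x)$ at the point $x$, constructed either through a change of variable reducing the problem to the trigonometric case or via a partition-of-unity patching of global and local Steklov means. Verifying that the resulting $g_t$ remains well-defined in $[0,1]$ (forcing the restriction $t\le t_0$), that $\|f-g_t\|\lesssim \omega_\varphi^r(f,t)$ by the very definition of the DT modulus, and that $\varphi^r g_t^{(r)}$ absorbs the singular weight via Hardy-type inequalities is the core technical content; this is precisely the material developed in Chapter 2 of \cite{Ditzian--Totik1987}, and is the step I would expect to occupy the bulk of a self-contained proof.
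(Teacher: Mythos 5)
The paper does not actually prove this lemma: it is imported verbatim from DeVore--Lorentz \cite[Chap.~6]{devore1993constructive} and Ditzian--Totik \cite[Thm.~2.1.1]{Ditzian--Totik1987}, so there is no in-paper argument to compare yours against. Judged on its own, your sketch follows the standard textbook route (split $f = (f-g)+g$ for the lower bound on the $K$-functional; iterated Steklov means for the upper bound), and as a roadmap it is essentially right, with the genuinely hard weighted case correctly identified and deferred to \cite{Ditzian--Totik1987}.

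One step, however, is wrong as stated. In the easy direction for the DT modulus you justify $\|\Delta^r_{h\varphi(x)}g\|\lesssim h^r\|\varphi^r g^{(r)}\|$ by claiming that the points $x+\sum_j u_j$ appearing in the iterated-integral representation have $\varphi$-values comparable to $\varphi(x)$. This comparability fails near the endpoints: the definition only requires $x\pm rh\varphi(x)/2\in[0,1]$, and for $x$ of order $h^2$ the left end of the averaging interval can reach all the way to $0$, where $\varphi$ vanishes, so $\inf\varphi$ over the support is not $\gtrsim\varphi(x)$. (Your displayed chain there also does not simplify to the claimed bound --- an extra factor $\varphi(x)^r$ survives.) The correct argument instead bounds the integral
\begin{align*}
\int_{-h\varphi(x)/2}^{h\varphi(x)/2}\!\!\cdots\!\!\int_{-h\varphi(x)/2}^{h\varphi(x)/2}\varphi\Bigl(x+\sum_{j=1}^r u_j\Bigr)^{-r}du_1\cdots du_r\lesssim h^r
\end{align*}
directly, exploiting the integrability of $\varphi^{-r}$ along each coordinate; this is a lemma in Chapter~2 of \cite{Ditzian--Totik1987} and is where the restriction $t\le t_0$ genuinely enters. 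With that replacement, and with the usual boundary modification of the Steklov means on the finite interval $[0,1]$ (which your unweighted construction also silently needs), the sketch matches the cited proofs.
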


We emphasize that the $K$-functionals and moduli of smoothness introduced here are tailored for the interval $[0,1]$ and the supremum norm, which can be generalized to general finite intervals and infinite intervals, and $L_p$ norms. The corresponding equivalence results also hold in those settings. We refer the interested readers to~\cite[Chap. 6]{devore1993constructive} and~\cite[Chap. 2]{Ditzian--Totik1987} for details. For other use of $K$-functionals in statistics and machine learning, we refer the readers to the theory of Besov spaces as interpolation spaces~\cite{hardle2012wavelets} and distribution testing~\cite{blais2016alice}. 

Now we illustrate the $K$-functional approach to bias analysis, which is well known in the approximation theory literature, see, e.g.~\cite{totik1988uniform}. Suppose $X$ is a random variable taking values in $[0,1]$, and we would like to bound the quantity $|\mathbb{E}[ f(X)] - f(\mathbb{E}[X])|$ for any $f\in C[0,1]$. Clearly, $f$ may not be differentiable, so we introduce another function $g\in C[0,1]$ such that $g^{(1)} \in \text{A.C.}_{\text{loc}}$. We proceed as follows:
\begin{align}
&|\mathbb{E} [f(X)] - f(\mathbb{E}[X])| \nonumber \\ 
 = &|\mathbb{E} \big[ f(X) -g(X) +g(X) - g(\mathbb{E}X)\nonumber \\
 & + g(\mathbb{E}X) - f(\mathbb{E}X) \big] | \\
 \leq &2 \| f - g \| + | \mathbb{E}[ g(X) - g(\mathbb{E}X)]| \\
 \leq &2 \| f -g \| + \frac{1}{2} \| g'' \| \mathsf{Var}(X) \\
 = &2 \left( \|f - g\| + t^2 \| g'' \| \right),
\end{align} 
where $t^2 = \frac{\mathsf{Var}(X)}{4}$. Since $g^{(1)} \in \text{A.C.}_{\text{loc}}$ is arbitrary, we know
\begin{align}
|\mathbb{E}[ f(X)] - f(\mathbb{E}[X])| & \leq 2 \inf_g \left\{ \|f - g\| + t^2 \| g'' \| \right\} \\
& = 2 K_2(f, t^2) \\
& = 2 K_2 \left(f, \frac{\mathsf{Var}(X)}{4} \right) \\
& \leq 2c_2 \omega^2\left(f, \frac{\sqrt{\mathsf{Var}(X)}}{2} \right),
\end{align}
where the constant $c_2$ is introduced in Lemma~\ref{lemma.kfunctionalequiva}. 

It was shown in \cite[Chap. 2, Sec. 9, Example 1]{devore1993constructive} that if $f(x) = x\ln x, x\in [0,1]$, then $\omega^2(f,t)  \leq 2(\ln 2)t$. Hence, we have shown that for any random variable $X\in [0,1]$, 
\begin{align}
|\mathbb{E}[X \ln X] - \mathbb{E}[X]\ln(\mathbb{E}[X]) | & \lesssim \sqrt{\mathsf{Var}(X)}. \label{eqn.entropyjensengap}
\end{align}

Specializing to the case where $X = \hat{p}_n$, where $n\cdot \hat{p}_n \sim \mathsf{B}(n,p)$, we have proved that for $f(p) = -p\ln p$, we have
\begin{align}
| \mathbb{E}[f(\hat{p}_n)] - p\ln p | & \lesssim \sqrt{\frac{p(1-p)}{n}}. 
\end{align}

The upper bound $\sqrt{\frac{p(1-p)}{n}}$ is a pointwise bound that becomes smaller when $p$ is close to $0$ or $1$. When $p$ lies in the middle of the interval $[0,1]$, say $p \approx \frac{1}{2}$, the bound is of scale $\frac{1}{\sqrt{n}}$. We now show that using the $K$-functional $K_{r,\varphi}$ instead of $K_r$ results in a better uniform bound in this case, which is of order $\frac{1}{n}$. \footnote{One remarkable fact is that, the $K$-functional approach with $K_{r,\varphi}$ in bias analysis provides the \emph{tight} norm bound for \emph{any} $f\in C[0,1]$ under the binomial model~\cite{Totik1994}. } 

For any $f\in C[0,1]$, $n\cdot \hat{p}_n \sim \mathsf{B}(n,p)$, and any $g\in C[0,1]$ such that $g^{(1)} \in \text{A.C.}_{\text{loc}}$, we have
\begin{align}
&| \mathbb{E}_p[f(\hat{p}_n)] - f(p) | \nonumber \\  \leq & |\mathbb{E}_p[ f(\hat{p}_n) - g(\hat{p}_n) + g(\hat{p}_n) - g(p) + g(p) -f(p) | \\
= & 2 \| f - g \| + | \mathbb{E}_p g(\hat{p}_n) - g(p) | \\
 = &2 \| f - g \| + \Bigg| \mathbb{E}_p \Big[ g'(p)(\hat{p}_n -p)  \nonumber \\ 
 & +\int_p^{\hat{p}_n} (\hat{p}_n - t) g''(t)dt \Big]\Bigg | \\
 = &2 \| f - g\| + \left | \mathbb{E}_p \left[ \int_p^{\hat{p}_n} (\hat{p}_n - t) g''(t)dt \right]  \right | \\
 \leq & 2 \| f - g\| + \mathbb{E}_p \left| \int_p^{\hat{p}_n} \left|  \frac{\hat{p}_n -t}{t(1-t)} \right|  \cdot \left| t(1-t) g''(t) \right | dt \right|  \\
 = & 2 \| f - g\| + \mathbb{E}_p \left| \int_p^{\hat{p}_n} \frac{|\hat{p}_n-p|}{p(1-p)} |t(1-t) g''(t)| dt \right | \\
 \leq & 2 \| f -g \| + \| \varphi^2 g''\| \mathbb{E}_p \left| \frac{(\hat{p}_n - p)^2}{p(1-p)} \right| \\
 \leq & 2 \| f -g \| + \| \varphi^2 g''\| \frac{1}{n},
\end{align}
where $\varphi(t)=\sqrt{t(1-t)}$, and we used the elementary inequality that $\frac{|\hat{p}_n-t|}{t(1-t)} \leq \frac{|\hat{p}_n - p|}{p(1-p)}$ for any $t$ between $p$ and $\hat{p}_n$. Taking the infimum over all $g$, we have
\begin{align}
| \mathbb{E}_p[f(\hat{p}_n)] - f(p) | & \leq 2 K_{r,\varphi}(f, \frac{1}{2n}) \\
& \lesssim \omega_\varphi^2(f, \frac{1}{\sqrt{2n}}).
\end{align}
It follows from~\cite{Jiao--Venkat--Han--Weissman2014maximum} that for $f(p) = -p\ln p$, $\omega_\varphi^2(f,t) \asymp t^2$, which implies that
\begin{align}
| \mathbb{E}_p [\hat{p}_n \ln \hat{p}_n] - p\ln p | \lesssim \frac{1}{n}. 
\end{align}

The functional $\text{Ent}(X) \triangleq \mathbb{E}[X \ln X] - \mathbb{E}[X]\ln(\mathbb{E}[X])$, which is also called \emph{entropy}, plays a crucial role in the theory of concentration inequalities. Concretely, the Herbst argument~\cite{boucheron2013concentration} shows that if $\text{Ent}(e^{\lambda f(X)}) \leq \frac{\lambda^2}{2} \mathbb{E} [ e^{\lambda f(X)}]$, we have sub-Gaussian type concentration $\mathbb{P}\left( f(X) - \mathbb{E}[f(X)] \geq t \right) \leq e^{-t^2/2}$. Due to the significance of the functional $\text{Ent}(X)$, we now present a theorem providing upper and lower bounds of $\text{Ent}(X)$. The key idea in the following proof is to relate the $\text{Ent}(X)$ functional to the KL divergence, whose functional inequalities have been well studied in the literature. Conceivably, they are stronger bounds than those obtained using the general $K$-functional approach~(Lemma~\ref{lemma.strukov} in Appendix~\ref{sec.auxlemmas}).

\begin{theorem}\label{thm.boundentropy}
Suppose $X$ is a non-negative random variable. Denote $\text{Ent}(X) = \mathbb{E}[X \ln X] - \mathbb{E}[X]\ln(\mathbb{E}[X])$. Then, 
\begin{align}
\text{Ent}(X)  &\leq \mathbb{E}[X] \ln \left( 1 + \frac{\mathsf{Var}(X)}{(\mathbb{E}[X])^2} \right) \leq \sqrt{\mathsf{Var}(X)} \\
\text{Ent}(X) & \geq 2 \left( \mathbb{E}[X] - \sqrt{\mathbb{E}[X]} \mathbb{E}[\sqrt{X}] \right) \geq \mathsf{Var}(\sqrt{X}) \\
\text{Ent}(X) & \geq \frac{1}{2}\mathbb{E}[X] \left( \mathbb{E} \left | \frac{X}{\mathbb{E}[X]} -1 \right | \right)^2. 
\end{align}
\end{theorem}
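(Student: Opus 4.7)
\textbf{Proof Proposal for Theorem~\ref{thm.boundentropy}.}

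The plan is to reduce all three bounds to standard $f$-divergence inequalities by viewing $\mathrm{Ent}(X)$ as a scaled Kullback--Leibler divergence. Concretely, on the underlying probability space $(\Omega,\mathcal{F},P)$, define the probability measure $Q$ via the Radon--Nikodym derivative $dQ/dP = X/\mathbb{E}[X]$ (well-defined since $X\geq 0$ and, without loss, $\mathbb{E}[X]>0$). Then a direct computation gives
\begin{align}
\mathrm{Ent}(X) \;=\; \mathbb{E}\!\left[X\ln\frac{X}{\mathbb{E}[X]}\right] \;=\; \mathbb{E}[X]\cdot \mathbb{E}_Q\!\left[\ln\frac{dQ}{dP}\right] \;=\; \mathbb{E}[X]\cdot D(Q\|P).
\end{align}
Every bound then becomes an inequality relating $D(Q\|P)$ to another $f$-divergence between $Q$ and $P$, evaluated with this particular choice of $dQ/dP$.

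First, for the upper bound, I would apply Jensen's inequality under $Q$ to the concave function $\ln$:
\begin{align}
D(Q\|P) \;=\; \mathbb{E}_Q\!\left[\ln\frac{X}{\mathbb{E}[X]}\right] \;\leq\; \ln\mathbb{E}_Q\!\left[\frac{X}{\mathbb{E}[X]}\right] \;=\; \ln\frac{\mathbb{E}[X^2]}{\mathbb{E}[X]^2} \;=\; \ln\!\left(1+\frac{\mathsf{Var}(X)}{\mathbb{E}[X]^2}\right).
\end{align}
Multiplying by $\mathbb{E}[X]$ yields the first inequality. The second inequality $\mathbb{E}[X]\ln(1+u)\leq \mathbb{E}[X]\sqrt{u}=\sqrt{\mathsf{Var}(X)}$, with $u=\mathsf{Var}(X)/\mathbb{E}[X]^2$, reduces to the elementary fact $\ln(1+u)\leq \sqrt{u}$ for $u\geq 0$, which follows by comparing derivatives using $1+u\geq 2\sqrt{u}$.

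Second, for the lower bound in terms of Hellinger distance, I would invoke the standard inequality $D(Q\|P)\geq 2\bigl(1-\int\!\sqrt{dQ/dP}\,dP\bigr)$ (obtainable from $-\ln x \geq 2(1-\sqrt{x})$ applied pointwise to $x=dP/dQ$ and integrated under $Q$). Plugging in $dQ/dP = X/\mathbb{E}[X]$ gives $\int\!\sqrt{dQ/dP}\,dP = \mathbb{E}[\sqrt{X}]/\sqrt{\mathbb{E}[X]}$, so multiplying by $\mathbb{E}[X]$ yields $\mathrm{Ent}(X)\geq 2(\mathbb{E}[X] - \sqrt{\mathbb{E}[X]}\,\mathbb{E}[\sqrt{X}])$. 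The further bound by $\mathsf{Var}(\sqrt{X}) = \mathbb{E}[X]-\mathbb{E}[\sqrt{X}]^2$ then amounts to the identity
\begin{align}
2\bigl(\mathbb{E}[X]-\sqrt{\mathbb{E}[X]}\,\mathbb{E}[\sqrt{X}]\bigr) - \bigl(\mathbb{E}[X]-\mathbb{E}[\sqrt{X}]^2\bigr) \;=\; \bigl(\sqrt{\mathbb{E}[X]} - \mathbb{E}[\sqrt{X}]\bigr)^2 \;\geq\; 0,
\end{align}
i.e.\ completing the square.

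Third, for the lower bound in terms of total variation, I would apply Pinsker's inequality $D(Q\|P)\geq 2\,\mathrm{TV}(Q,P)^2$, with $\mathrm{TV}(Q,P)=\tfrac{1}{2}\int|dQ/dP-1|\,dP = \tfrac{1}{2}\mathbb{E}\bigl|X/\mathbb{E}[X]-1\bigr|$. Multiplying by $\mathbb{E}[X]$ gives the stated bound. None of the three steps is difficult in isolation; the only conceptual point--which I see as the main (minor) obstacle--is recognizing the reduction $\mathrm{Ent}(X)=\mathbb{E}[X]\cdot D(Q\|P)$, after which the proof is a direct assembly of Jensen's inequality, the Hellinger--KL inequality, and Pinsker's inequality.
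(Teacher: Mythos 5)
Your proposal is correct and follows essentially the same route as the paper: both identify $\mathrm{Ent}(X)=\mathbb{E}[X]\cdot D$ for the KL divergence between the size-biased measure and the original one, then apply the bounds $D\leq\ln(1+\chi^2)$, $D\geq H^2$, and Pinsker's inequality (the paper cites these as Lemma~\ref{lemma.tsybakovinequalities}, whereas you derive them on the spot from Jensen and the pointwise inequalities $\ln y\leq y-1$ and $\ln(1+u)\leq\sqrt{u}$, which is an immaterial difference). All intermediate computations, including the completing-the-square step for $\mathsf{Var}(\sqrt{X})$, match the paper's.
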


\begin{remark}
It was shown in~\cite{latala2000between} that $\text{Ent}(X) \geq \mathsf{Var}(\sqrt{X})$. Theorem~\ref{thm.boundentropy} strengthens~\cite{latala2000between}.
\end{remark}

\begin{proof}
Without loss of generality we assume $\mathbb{E}[X]>0$. Then, we have
\begin{align}
\text{Ent}(X) & = \mathbb{E}\left[ X \ln \frac{X}{\mathbb{E}[X]} \right] \\
& = \mathbb{E}[X] \cdot \mathbb{E}\left[ \frac{X}{\mathbb{E}[X]} \ln \frac{X}{\mathbb{E}[X]} \right] \\
\end{align}
Denote the distribution of $X$ as $Q$, and introduce a new probability measure $P$ via the Radon--Nikodym derivative
\begin{align}
\frac{dP}{dQ} & = \frac{X}{\mathbb{E}[X]}, 
\end{align}
we have
\begin{align}
\text{Ent}(X) & = \mathbb{E}_Q[X] \mathbb{E}_Q \left[ \frac{dP}{dQ} \ln \frac{dP}{dQ} \right] \\
& = \mathbb{E}_Q[X] \cdot D(P,Q),
\end{align}
where $D(P,Q)$ is the KL divergence between $P$ and $Q$. 

Applying Lemma~\ref{lemma.tsybakovinequalities} in Appendix~\ref{sec.auxlemmas}, we have
\begin{align}
\text{Ent}(X) & \leq \mathbb{E}_Q[X] \cdot \ln \left( \mathbb{E}_Q \left( \frac{X}{\mathbb{E}_Q[X]} \right)^2 \right) \\
& = \mathbb{E}[X] \cdot \ln \left( 1 + \frac{\mathsf{Var}(X)}{(\mathbb{E}[X])^2} \right),
\end{align}
where in the last step we used the fact that $\mathsf{Var}(X) = \mathbb{E}[X^2] - (\mathbb{E}[X])^2$. Using the fact that $\sup_{x\geq 0} \frac{\ln(1+x)}{\sqrt{x}}< 1$, we have
\begin{align}
\text{Ent}(X) & \leq \sqrt{\mathsf{Var}(X)}. 
\end{align}

Now we prove the lower bounds. Applying the Hellinger distance part of Lemma~\ref{lemma.tsybakovinequalities} in Appendix~\ref{sec.auxlemmas}, we have
\begin{align}
\text{Ent}(X) & \geq \mathbb{E}_Q[X] \cdot H^2(P,Q) \\
& = \mathbb{E}_Q[X] \cdot \mathbb{E}_Q \left( \sqrt{\frac{X}{\mathbb{E}_Q[X]}} -1 \right)^2 \\
& \geq 2 \left( \mathbb{E}_Q[X] - \sqrt{\mathbb{E}_Q[X]} \mathbb{E}_Q[\sqrt{X}] \right) \\
& \geq \mathbb{E}_Q[X] - (\mathbb{E}_Q[\sqrt{X}])^2 \\
& = \mathsf{Var}(\sqrt{X}). 
\end{align}
Here in the last inequality we used the fact that $\mathbb{E}[X] + (\mathbb{E}[\sqrt{X}])^2 - 2 \sqrt{\mathbb{E}[X]} \mathbb{E}[\sqrt{X}]\geq 0$. 

Applying the total variation distance part of Lemma~\ref{lemma.tsybakovinequalities} in Appendix~\ref{sec.auxlemmas}, we have
\begin{align}
\text{Ent}(X) & \geq \mathbb{E}_Q[X] \cdot 2V^2(P,Q) \\
& = \frac{1}{2} \mathbb{E}_Q[X] \cdot \left( \mathbb{E}_Q \left | \frac{X}{\mathbb{E}_Q[X]} -1 \right | \right)^2.
\end{align}

\end{proof}

\section{Auxiliary lemmas}\label{sec.auxlemmas}

\begin{lemma}[Mean value theorem for divided difference]\label{lemma.meanvaluetheoremdivideddifferrence}
Suppose the function $f$ is $n-1$ times differentiable in the interval determined by the smallest and the largest of the $x_i$'s, we have
\begin{align}
f[x_1,x_2,\ldots,x_n] = \frac{f^{(n-1)}(\xi)}{(n-1)!},
\end{align}
where $\xi$ is in the open interval $(\min_i x_i, \max_i x_i)$, and $f[x_1,x_2,\ldots,x_n]$ is the divided difference in Definition~\ref{def.divideddifference}.  
\end{lemma}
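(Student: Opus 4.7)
The plan is to prove this classical mean value theorem via the standard interpolation argument, which reduces everything to iterated applications of Rolle's theorem. The key observation is that the divided difference $f[x_1,\ldots,x_n]$ is precisely the leading coefficient of the Lagrange interpolation polynomial of $f$ at the nodes $x_1,\ldots,x_n$.

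First I would without loss of generality assume the points $x_1<x_2<\cdots<x_n$ are distinct and ordered (the lemma's hypothesis excludes repeated nodes since the definition of divided difference in Definition~\ref{def.divideddifference} requires distinct points). Let $P\in\poly_{n-1}$ denote the unique polynomial of degree at most $n-1$ interpolating $f$ at $x_1,\ldots,x_n$. Expanding $P$ in the Newton basis
\begin{align}
P(x) = \sum_{k=1}^{n} f[x_1,\ldots,x_k]\,\prod_{j=1}^{k-1}(x-x_j),
\end{align}
one reads off that the coefficient of $x^{n-1}$ in $P$ equals $f[x_1,\ldots,x_n]$, so that $P^{(n-1)}(x)\equiv (n-1)!\,f[x_1,\ldots,x_n]$.

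Next I would consider the error $g(x)\triangleq f(x)-P(x)$. By construction $g(x_i)=0$ for $i=1,\ldots,n$, so $g$ has at least $n$ distinct zeros in $[x_1,x_n]$. Applying Rolle's theorem between consecutive zeros of $g$ yields at least $n-1$ distinct zeros of $g'$ in $(x_1,x_n)$; iterating Rolle's theorem a total of $n-1$ times (which is permissible because $f$, and hence $g$, is assumed to be $n-1$ times differentiable on the relevant interval) produces at least one point $\xi\in(x_1,x_n)=(\min_i x_i,\max_i x_i)$ such that $g^{(n-1)}(\xi)=0$.

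Finally, substituting back, $g^{(n-1)}(\xi)=f^{(n-1)}(\xi)-P^{(n-1)}(\xi)=f^{(n-1)}(\xi)-(n-1)!\,f[x_1,\ldots,x_n]=0$, which rearranges to the claimed identity. No step here is an obstacle: the existence and uniqueness of the Lagrange interpolant is standard, the Newton-form expansion is a direct verification, and the repeated Rolle argument is routine once $f$ is $(n-1)$-times differentiable on $[\min x_i,\max x_i]$. The only minor point to state carefully is that each Rolle application strictly reduces the number of guaranteed zeros by one while keeping them inside the open interval, so the final zero $\xi$ lies in the open interval $(\min_i x_i,\max_i x_i)$ as required.
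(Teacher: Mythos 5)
Your proof is correct and is the standard interpolation-plus-Rolle argument: the paper states this classical lemma without proof, but your strategy (Lagrange/Newton interpolant, $g=f-P$, iterated Rolle) is exactly the continuous analogue of the argument the paper itself gives for the discrete version in Lemma~\ref{lemma.meanvaluetheorem}. All steps check out, including the identification of $f[x_1,\ldots,x_n]$ as the leading coefficient and the placement of $\xi$ in the open interval.
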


The following lemma which is closely related to the mean value theorem for divided differences in the continuous case.
\begin{lemma}\label{lemma.meanvaluetheorem}
For integers $x_0<x_1<\cdots<x_r$ and any function $f$ defined on $\mathbb{Z}$, the following holds:
\begin{align}
|f[x_0,\cdots,x_r]| \le \frac{1}{r!}\max_{x\in [x_0,x_r]} |\Delta^r f(x)|.
\end{align}
Here $\Delta^r f(x)$ denotes the $r$-th order backward difference of $f$, which is defined as
\begin{align}
\Delta^r f(x) \triangleq \sum_{k = 0}^r (-1)^k \binom{r}{k} f(x-k). 
\end{align}
\end{lemma}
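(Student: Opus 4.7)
The plan is to establish a stronger structural claim: for any integers $x_0<x_1<\cdots<x_r$ and any function $f$ on $\mathbb{Z}$, the quantity $r!\,f[x_0,\ldots,x_r]$ admits a representation
\begin{align*}
r!\,f[x_0,\ldots,x_r] \;=\; \sum_k w_k\,\Delta^r f(k),
\end{align*}
where $\{w_k\}$ is a probability distribution supported on $\{x_0+r,\,x_0+r+1,\ldots,x_r\}$. Once such a representation is in hand, the stated inequality follows immediately from the triangle inequality, since every $k$ in the support of $w$ lies in $[x_0,x_r]$.

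I will argue by induction on $r$. The base case $r=1$ is the telescoping identity
\begin{align*}
f[x_0,x_1] \;=\; \frac{f(x_1)-f(x_0)}{x_1-x_0} \;=\; \frac{1}{x_1-x_0}\sum_{k=x_0+1}^{x_1} \Delta f(k),
\end{align*}
whose weights $w_k = 1/(x_1-x_0)$ are non-negative and sum to $1$. For the inductive step, I invoke the standard divided-difference recursion
\begin{align*}
f[x_0,\ldots,x_r] \;=\; \frac{f[x_1,\ldots,x_r] - f[x_0,\ldots,x_{r-1}]}{x_r - x_0}
\end{align*}
and apply the inductive hypothesis to both $(r-1)$-th divided differences, obtaining probability weight sequences $a$ on $\{x_1+r-1,\ldots,x_r\}$ and $b$ on $\{x_0+r-1,\ldots,x_{r-1}\}$. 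Setting $u_k := a_k-b_k$, one has $\sum_k u_k = 0$, and summation by parts together with the identity $\Delta^{r-1}f(k)-\Delta^{r-1}f(k-1)=\Delta^r f(k)$ converts the resulting signed combination of $\Delta^{r-1}f$ values into a signed combination of $\Delta^r f$ values:
\begin{align*}
r!\,f[x_0,\ldots,x_r] \;=\; -\frac{r}{x_r-x_0}\sum_k U_{k-1}\,\Delta^r f(k), \qquad U_k := \sum_{j\le k} u_j.
\end{align*}
The mass condition $\sum_k w_k = 1$ for $w_k := -r\,U_{k-1}/(x_r-x_0)$ is verified a posteriori by testing the identity against the polynomial $\binom{x}{r}$, for which $\Delta^r\binom{x}{r}\equiv 1$ and $r!\,\binom{\cdot}{r}[x_0,\ldots,x_r]=1$.

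The main obstacle is establishing the non-negativity $w_k\ge 0$, equivalently $U_k\le 0$ for every $k$, which amounts to showing that the measure $a$ attached to the shifted tuple $(x_1,\ldots,x_r)$ first-order stochastically dominates the measure $b$ attached to $(x_0,\ldots,x_{r-1})$. I would handle this by strengthening the inductive hypothesis to a componentwise monotonicity principle: whenever $(y_0,\ldots,y_{r-1})$ and $(y_0',\ldots,y_{r-1}')$ are integer tuples with $y_i\le y_i'$ for each $i$, the weight distribution attached to the primed tuple stochastically dominates that attached to the unprimed one. Propagating this monotonicity through the inductive step is the delicate combinatorial step. A cleaner non-inductive alternative is to exhibit the weights explicitly via a discrete B-spline formula, $w_k = r\,\phi_k[x_0,\ldots,x_r]$ with $\phi_k(x)=(x-k+1)_+^{r-1}$, and invoke the classical Peano-kernel/B-spline fact that divided differences of truncated powers in the knot variable are non-negative; non-negativity of $w_k$ then follows at once, and the identity is again pinned down by testing on $\binom{x}{r}$.
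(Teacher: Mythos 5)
Your strategy---representing $r!\,f[x_0,\ldots,x_r]$ as a convex combination $\sum_k w_k\,\Delta^r f(k)$ with weights supported in $[x_0,x_r]$---is a genuinely different (and in principle stronger) route than the paper's, which instead subtracts the Lagrange interpolant and runs a discrete Rolle argument: the difference $g=f-p$ has $r+1$ zeros, telescoping $0=g(x_{i+1})-g(x_i)=\sum\Delta g$ produces alternating-sign points for $\Delta g$, and iterating $r$ times sandwiches $r!\,f[x_0,\ldots,x_r]$ between two values of $\Delta^r f$ on $[x_0,x_r]$. However, your proposal has a genuine gap: the non-negativity of the weights, which is the entire analytic content of the lemma, is never established. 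The existence of \emph{some} representation $\sum_k w_k\,\Delta^r f(k)$ and the normalization $\sum_k w_k=1$ are routine linear algebra (both functionals annihilate $\poly_{r-1}$, and the $\Delta^r f(k)$, $x_0+r\le k\le x_r$, span the annihilator); route (a) reduces everything to $U_k\le 0$, i.e.\ a stochastic-dominance statement, and then explicitly defers it as ``the delicate combinatorial step.'' That step is not a refinement to be filled in later---it \emph{is} the lemma.

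Route (b) does not close the gap either, because the proposed kernel is the continuous Peano kernel, not the discrete one, and the resulting identity is false for $r\ge 3$. Take consecutive nodes $x_i=x_0+i$: then $r!\,f[x_0,\ldots,x_r]=\Delta^r f(x_r)$, so the unique weight is $w_{x_r}=1$; but $\phi_{x_r}(x)=(x-x_r+1)_+^{r-1}$ vanishes at $x_0,\ldots,x_{r-1}$ and equals $1$ at $x_r$, giving $r\,\phi_{x_r}[x_0,\ldots,x_r]=r/r!=1/(r-1)!$, which is $1$ only for $r\le 2$. The correct discrete remainder kernel (from the Newton forward-difference formula with remainder) is the truncated falling factorial $\psi_k(x)=\binom{x-k+r-1}{r-1}\mathbbm{1}(x\ge k)$, with $w_k=r!\,\psi_k[x_0,\ldots,x_r]$; and even then the positivity of these divided differences is a \emph{discrete} B-spline fact that does not follow from the continuous Curry--Schoenberg positivity you cite without an additional argument (e.g.\ an induction on $r$ via the B-spline recurrence, or a Rolle-type count of sign changes---at which point you have essentially reconstructed the paper's proof). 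To repair the argument, either carry out that discrete positivity proof, or adopt the paper's interpolant-subtraction argument, which avoids the kernel entirely.
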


\begin{lemma}\label{lemma.2jackkniferepres}
Suppose one observes $X\sim \mathsf{B}(n,p)$. Then, the $r$-jackknife estimator with $n_1 = n-1, n_2 = n, r = 2$ in estimating $f(p)$ in~(\ref{eqn.generalrjackknifedef}) can be represented as
\begin{align}
\hat{f}_2 & = n f\left( \frac{X}{n} \right) - \frac{n-1}{n} \Bigg( (n-X) f\left( \frac{X}{n-1}\right) \nonumber \\ 
& \quad + X f\left( \frac{X-1}{n-1} \right) \Bigg),
\end{align}
where one conveniently sets $f(x) =0$ if $x<0$. 
\end{lemma}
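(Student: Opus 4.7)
The plan is to substitute the specialized parameters into the definition of the general $r$-jackknife estimator given in Definition~\ref{def.generalrjackknifeestimatordefinition} and simplify the resulting $U$-statistics by a direct combinatorial count.

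First, with $r=2$, $n_1=n-1$, and $n_2=n$, formula~(\ref{eqn.ciexpressions}) immediately gives
\begin{align}
C_1 = \frac{n_1}{n_1-n_2} = \frac{n-1}{-1} = -(n-1), \qquad C_2 = \frac{n_2}{n_2-n_1} = n,
\end{align}
so that $\hat{f}_2 = n\,\mathbb{U}_n[f(\hat{p}_n)] - (n-1)\,\mathbb{U}_n[f(\hat{p}_{n-1})]$ by~(\ref{eqn.generalrjackknifedef}).

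Next, I would handle the two $U$-statistics separately. The kernel for $n_2=n$ uses all $n$ Bernoulli samples $X_1,\ldots,X_n$, whose only $q=n$ combination produces $\hat{p}_n=X/n$, where $X=\sum_{i=1}^n X_i\sim \mathsf{B}(n,p)$. Hence $\mathbb{U}_n[f(\hat{p}_n)] = f(X/n)$. For $n_1=n-1$, the $U$-statistic averages $f(\hat{p}_{n-1})$ over all $\binom{n}{n-1}=n$ size-$(n-1)$ subsets of $\{X_1,\ldots,X_n\}$. Each such subset corresponds to deleting exactly one sample: deleting a success decreases the count from $X$ to $X-1$, and deleting a failure keeps it at $X$. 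Since exactly $X$ of the samples are successes and $n-X$ are failures, exactly $X$ subsets give the value $f\!\left(\tfrac{X-1}{n-1}\right)$ and the remaining $n-X$ subsets give $f\!\left(\tfrac{X}{n-1}\right)$. Therefore
\begin{align}
\mathbb{U}_n[f(\hat{p}_{n-1})] = \frac{1}{n}\left( X\,f\!\left(\tfrac{X-1}{n-1}\right) + (n-X)\,f\!\left(\tfrac{X}{n-1}\right) \right).
\end{align}

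Combining these two expressions yields the claimed formula, and the boundary convention $f(x)=0$ for $x<0$ only comes into play when $X=0$, where it harmlessly kills the $Xf((X-1)/(n-1))$ term. There is no substantial obstacle here; the only mild subtlety is to verify the count of $(n-1)$-subsets by the number of successes vs.\ failures in the deleted sample, which is the combinatorial identity underlying the representation.
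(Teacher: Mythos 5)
Your proof is correct, and it is the natural direct verification (the paper states this lemma without proof): the coefficients $C_1=-(n-1)$, $C_2=n$ follow from~(\ref{eqn.ciexpressions}), and the delete-one $U$-statistic average is exactly the success/failure count you give. The only remark worth adding is that the convention $f(x)=0$ for $x<0$ is purely formal, since as you note the offending term $X f\bigl(\tfrac{X-1}{n-1}\bigr)$ carries the factor $X=0$ whenever the argument is negative.
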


\begin{lemma}\label{lemma.higherpower}
Let $r\geq 2$. Then, for the coefficients given in~(\ref{eqn.ciexpressions}), we have the following. 
\begin{enumerate}
\item If $\rho = 0$, then
\begin{align}
\sum_{i = 1}^r \frac{C_i}{n_i^\rho}  & = 1.
\end{align}
\item If $1\leq \rho \leq r-1$, then
\begin{align}
\sum_{i = 1}^r \frac{C_i}{n_i^\rho}  & = 0. 
\end{align}
\item If $\rho \geq r$, then
\begin{align}
\left | \sum_{i = 1}^r \frac{C_i}{n_i^\rho} \right | & \leq \left| \prod_{s = 0}^{r-2} (r-1-\rho -s) \right | \frac{1}{(r-1)!} \frac{1}{n_1^\rho} \\
& \leq \frac{(\rho-1)^{r-1} }{(r-1)!} \frac{1}{n_1^\rho}. 
\end{align}
\end{enumerate}
\end{lemma}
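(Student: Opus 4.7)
\textbf{Proof proposal for Lemma~\ref{lemma.higherpower}.} The plan is to recognize $\sum_{i=1}^r C_i / n_i^{\rho}$ as an $(r-1)$-th divided difference of a power function and exploit standard properties of divided differences. Substituting $C_i = \prod_{j\neq i} n_i/(n_i-n_j)$ from~(\ref{eqn.ciexpressions}), we have
\begin{align}
\sum_{i=1}^r \frac{C_i}{n_i^{\rho}} \;=\; \sum_{i=1}^r \frac{n_i^{\,r-1-\rho}}{\prod_{j\neq i}(n_i - n_j)} \;=\; g_{\rho}[n_1,n_2,\ldots,n_r],
\end{align}
where $g_{\rho}(x) = x^{r-1-\rho}$ and the right-hand side is the divided difference in the sense of Definition~\ref{def.divideddifference}.

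For part (1), $g_0(x)=x^{r-1}$ is a polynomial of degree exactly $r-1$, and a standard fact (equivalent to Newton's forward formula) is that the $(r-1)$-st divided difference of a polynomial of degree $r-1$ at any $r$ distinct nodes equals its leading coefficient, which is $1$ here. For part (2), $g_{\rho}$ is a polynomial of degree $r-1-\rho < r-1$, and the $(r-1)$-st divided difference of any polynomial of degree strictly less than $r-1$ vanishes; this follows from the fact that divided differences are linear in the function and that by part (1) they annihilate $x^{r-1}$-free lower-order monomials after normalization, or more directly from Lemma~\ref{lemma.meanvaluetheoremdivideddifferrence} (the $(r-1)$-st derivative of $g_{\rho}$ is identically zero).

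For part (3), the interesting case, $\rho \geq r$ makes $g_{\rho}(x) = x^{-(\rho-r+1)}$ a negative power; since the nodes $n_1<\cdots<n_r$ are positive, $g_{\rho}$ is smooth on $(n_1,n_r)$ and Lemma~\ref{lemma.meanvaluetheoremdivideddifferrence} applies, giving
\begin{align}
g_{\rho}[n_1,\ldots,n_r] \;=\; \frac{g_{\rho}^{(r-1)}(\xi)}{(r-1)!}
\quad\text{for some }\xi\in(n_1,n_r).
\end{align}
A direct computation yields
\begin{align}
g_{\rho}^{(r-1)}(x) \;=\; \Bigl(\prod_{s=0}^{r-2}(r-1-\rho-s)\Bigr)\, x^{-\rho},
\end{align}
so since $\xi \geq n_1 > 0$ and $\rho\geq r>0$ we have $\xi^{-\rho}\leq n_1^{-\rho}$, which immediately gives the first claimed upper bound. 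For the second bound, observe that for $\rho\geq r$ each factor $r-1-\rho-s$ ($0\leq s\leq r-2$) is a negative integer of absolute value $\rho-r+1+s \leq \rho-1$, hence the product of the $r-1$ factors is bounded in magnitude by $(\rho-1)^{r-1}$.

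No serious obstacle is anticipated; the only care needed is in checking the sign/magnitude of the derivative factors in part (3) and in recording that $g_{\rho}$ is genuinely analytic on the open interval $(n_1,n_r)\subset(0,\infty)$ so that Lemma~\ref{lemma.meanvaluetheoremdivideddifferrence} applies. The main conceptual step is the reduction to divided differences, after which parts (1)–(3) follow from, respectively, the leading-coefficient identity, the degree-lowering property, and the mean-value theorem for divided differences.
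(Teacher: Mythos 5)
Your proposal is correct and follows essentially the same route as the paper: it rewrites $\sum_i C_i/n_i^{\rho}$ as the divided difference $x^{r-1-\rho}[n_1,\ldots,n_r]$ and then applies the mean value theorem for divided differences (Lemma~\ref{lemma.meanvaluetheoremdivideddifferrence}), with the derivative computation and the bound $|r-1-\rho-s|\leq \rho-1$ handled exactly as in the paper's argument.
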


Define $T_{n,s}(p) = n^s \mathbb{E}_p (\hat{p}_n - p)^s, n = 1,2,\ldots, s = 0,1,\ldots$. We have $T_{n,0} = 1, T_{n,1} = 0$. Upon observing the recurrence relation
\begin{align}
T_{n, s+1}(x) = x(1-x) \left( T'_{n,s}(x) + ns T_{n,s-1}(x) \right),
\end{align}
one obtains the following result. 
\begin{lemma}\cite[Chapter 10, Theorem 1.1.]{devore1993constructive}\label{lemma.binomialmoments}
For a fixed $s = 0,1,\ldots$, $T_{n,s}(p)$ is a polynomial in $p$ of degree $\leq s$, and in $n$ of degree $\lfloor s/2 \rfloor$. Moreover, for $\varphi^2 = p(1-p)$, we have
\begin{align}
T_{n,2s}(p)& = \sum_{j = 1}^s a_{j,s}(\varphi^2) n^j \varphi^{2j} \\
T_{n,2s+1}(p) & = (1-2p) \sum_{j = 1}^s b_{j,s}(\varphi^2) n^j \varphi^{2j},
\end{align}
where $a_{j,s}, b_{j,s}$ are polynomials of degree $\leq s-j$, with coefficients independent of $n$. 
\end{lemma}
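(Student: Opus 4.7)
The strategy is induction on $s$ using the given recurrence
\[
T_{n,s+1}(x) = x(1-x)\bigl(T'_{n,s}(x) + ns\, T_{n,s-1}(x)\bigr),
\]
together with the base cases $T_{n,0}=1$ and $T_{n,1}=0$, which trivially satisfy the claimed form (the sums on the right-hand side of the lemma are empty for $s=0$). A direct computation also gives $T_{n,2}(p)=np(1-p)=n\varphi^2$ and $T_{n,3}(p)=n(1-2p)\varphi^2$, which serve as the first two non-trivial instances and suggest the shape of the induction.

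I would carry out the induction by alternating between even and odd orders. Suppose the claim holds for $T_{n,2s}$ and $T_{n,2s-1}$; I would derive the form for $T_{n,2s+1}$ from the recurrence. Using $(\varphi^{2})'=1-2p$, I get $(\varphi^{2j})' = j(1-2p)\varphi^{2j-2}$ and $(a_{j,s}(\varphi^2))' = a'_{j,s}(\varphi^2)(1-2p)$, so each term in $T'_{n,2s}$ picks up exactly one factor of $(1-2p)$; multiplying by $x(1-x)=\varphi^2$ then absorbs the negative power back up. The other summand $2sn\,x(1-x)\,T_{n,2s-1}(x)$ already contains the $(1-2p)$ from the odd case. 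Factoring $(1-2p)$ out exhibits $T_{n,2s+1}$ in the required form, and I would identify the new coefficients $b_{j,s}$ as linear combinations of $a_{j,s}$, $a'_{j,s}$, $a_{j-1,s}$, $a'_{j-1,s}$, and $b_{j-1,s-1}$. Passing from $T_{n,2s-1}$ to $T_{n,2s}$ is analogous: now $T'_{n,2s-1}$ produces a $(1-2p)^2 = 1-4\varphi^2$ factor which re-enters the polynomial in $\varphi^2$, while the $T_{n,2s-2}$ term already contributes only polynomials in $\varphi^2$.

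The main bookkeeping obstacle is keeping the degree bounds $\deg a_{j,s}\le s-j$ and $\deg b_{j,s}\le s-j$ honest at each step. Differentiation lowers the degree in $\varphi^2$ by one, but multiplying by $\varphi^2$ raises it again, and the recurrence shifts the second subscript, so I need to check two things carefully: (i) the new coefficient at level $n^j\varphi^{2j}$ really has $\varphi^2$-degree at most $s-j$ (or $s-j$ in the odd case), and (ii) the total degree in $p$ is $\le 2s+1$ and the total degree in $n$ is $\lfloor(2s+1)/2\rfloor=s$, which will follow once (i) is established. Conclusions about degree in $p$ and $n$ then drop out: the highest power of $n$ is $n^s$ contributed by $j=s$, and for the $p$-degree the term $a_{j,s}(\varphi^2)\varphi^{2j}$ has $p$-degree at most $2(s-j)+2j=2s$, with the odd case gaining the additional factor $(1-2p)$ to reach $2s+1$.

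Finally, the fact that the coefficients of $a_{j,s}$ and $b_{j,s}$ do not depend on $n$ follows automatically from the recurrence, since the only explicit $n$ in $T_{n,s+1}$ comes from the factor $ns$ multiplying $T_{n,s-1}$, which, by induction, shifts $n^j$ to $n^{j+1}$ without introducing $n$-dependent coefficients. This completes the inductive step and the proof.
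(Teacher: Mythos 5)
Your proposal is correct and follows the same route the paper points to: the lemma is quoted from DeVore--Lorentz, and the standard proof there is exactly this induction on $s$ via the recurrence $T_{n,s+1}=x(1-x)(T'_{n,s}+nsT_{n,s-1})$, using $(\varphi^{2j})'=j(1-2p)\varphi^{2j-2}$ and $(1-2p)^2=1-4\varphi^2$ to track the parity and the degree bounds, just as you do. The only cosmetic point is that the displayed formula with the sum starting at $j=1$ is really meant for $s\ge 1$ (for $s=0$ the empty sum gives $0$, not $T_{n,0}=1$), but this does not affect your induction.
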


\begin{lemma}\label{lemma.centralmomentcoeffbounds}
The central moments of $\hat{p}_n$ where $n\cdot \hat{p}_n \sim \mathsf{B}(n,p)$ satisfy the following: 
\begin{align}
n^s \mathbb{E}_p (\hat{p}_n - p)^s & = \sum_{j = 1}^{\lfloor s/2 \rfloor} h_{j,s}(p) n^j, \label{eqn.individualterm}
\end{align}
where 
\begin{align}
\|h_{j,s}(p)\| \leq \frac{(4es)^s}{j!}.
\end{align}
\end{lemma}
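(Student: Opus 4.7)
The structural form $n^s\mathbb{E}_p(\hat{p}_n-p)^s=\sum_{j=1}^{\lfloor s/2\rfloor}h_{j,s}(p)\,n^j$ is essentially immediate from Lemma~\ref{lemma.binomialmoments}, which already exhibits $T_{n,s}(p)$ as a polynomial in $n$ of degree $\lfloor s/2\rfloor$ with vanishing constant term (since $X=0$ almost surely when $n=0$). The substance of what remains is a uniform bound on the coefficients, and my plan is to exploit the moment-generating function together with a Cauchy estimate, so that $n$ enters the problem cleanly through an exponent.

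Writing $X=n\hat{p}_n\sim\mathsf{B}(n,p)$, I would start from the identity
\begin{align}
\mathbb{E}_p[e^{t(X-np)}] = e^{-npt}(1-p+pe^t)^n = \bigl((1-p)e^{-pt}+pe^{(1-p)t}\bigr)^n = e^{nL(t)},
\end{align}
where $L(t)=\log\bigl((1-p)e^{-pt}+pe^{(1-p)t}\bigr)$ is the cumulant-generating function of a centered $\mathrm{Bern}(p)$. Expanding the outer exponential in $n$ and equating coefficients of $t^s/s!$ against $n^s\mathbb{E}_p(\hat{p}_n-p)^s=s!\,[t^s]\mathbb{E}_p[e^{t(X-np)}]$ gives the clean representation
\begin{align}
h_{j,s}(p) = \frac{s!}{j!}\,[t^s]L(t)^j.
\end{align}
Since $L(0)=L'(0)=0$ forces $L(t)^j=O(t^{2j})$, this representation automatically recovers the stated index range $1\le j\le \lfloor s/2\rfloor$.

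Next I would bound $|L|$ on the closed unit disk, uniformly in $p$. Setting $G(t)=(1-p)(e^{-pt}-1)+p(e^{(1-p)t}-1)=\sum_{k\ge 2}c_k(p)\,t^k/k!$ with $c_k(p)=(1-p)(-p)^k+p(1-p)^k$, a direct check gives $|c_k(p)|\le p(1-p)\le 1/4$ for every $k\ge 2$, so for $|t|\le 1$,
\begin{align}
|G(t)|\le \frac{1}{4}\sum_{k\ge 2}\frac{1}{k!} = \frac{e-2}{4} < \frac{1}{2}.
\end{align}
This ensures $1+G(t)\ne 0$ on $|t|\le 1$, so the principal branch of $L=\log(1+G)$ is analytic there, and the elementary estimate $|\log(1+z)|\le 2|z|$ on $|z|\le 1/2$ yields $\max_{|t|\le 1}|L(t)|\le (e-2)/2<1$.

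Applying Cauchy's inequality on $|t|=1$ then gives $|[t^s]L(t)^j|\le \max_{|t|=1}|L(t)|^j\le 1$, whence
\begin{align}
|h_{j,s}(p)| \le \frac{s!}{j!} \le \frac{s^s}{j!} \le \frac{(4es)^s}{j!}
\end{align}
uniformly in $p\in[0,1]$, using the crude $s!\le s^s$ and $4e\ge 1$. The step I expect to be the main obstacle is arranging the $p$-uniformity of the disk bound on $L$: the non-vanishing of $1+G(t)$ and the legitimacy of the principal branch must both be handled simultaneously for every $p\in[0,1]$, and this is exactly what the $p$-free estimate $|c_k(p)|\le 1/4$ is designed to deliver; once that is in hand, the Cauchy bound does the rest and no delicate combinatorics on set partitions or Stirling numbers is required.
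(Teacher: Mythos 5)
Your argument is correct, and it is a genuinely different — and cleaner — route than the paper's. The paper also starts from the moment generating function, but it expands $e^{-zp}\bigl(1+p(e^{z/n}-1)\bigr)^n$ directly as a formal power series and extracts the coefficient of $n^{j-s}z^s$, which produces an explicit triple sum for $h_{j,s}(p)/s!$ involving the coefficients $t_{k,r}$ of the falling factorial (Stirling numbers of the first kind) and compositions of $s-i$ into $k$ positive parts; the bound $(4es)^s/j!$ then comes from crude term-by-term estimates such as $|t_{k,r}|\le k^k/r!$ and bounding the composition sum by $(k+i)^s/s!$. You instead pass to the cumulant generating function, writing the MGF of the centered sum as $e^{nL(t)}$, so that expanding the outer exponential gives the exact identity $h_{j,s}(p)=\frac{s!}{j!}[t^s]L(t)^j$; the restriction $1\le j\le\lfloor s/2\rfloor$ falls out immediately from $L(t)=O(t^2)$, and a single Cauchy estimate on the unit disk — made uniform in $p$ by the observation that the centered Bernoulli moments satisfy $|c_k(p)|\le p(1-p)\le 1/4$ for $k\ge 2$, so $|G|<1/2$ and $|L|\le(e-2)/2<1$ there — replaces all of the combinatorics. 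Your approach buys a strictly stronger bound, $|h_{j,s}(p)|\le s!/j!$, of which the stated $(4es)^s/j!$ is a weak corollary, at the price of invoking complex analysis (analyticity of the log branch and Cauchy's inequality) where the paper stays entirely within formal power series and elementary counting. All the individual steps check out: $c_1(p)=0$ so $G$ genuinely starts at $t^2$, the estimate $|\log(1+z)|\le 2|z|$ on $|z|\le 1/2$ is valid via the alternating series, and the uniqueness of the polynomial-in-$n$ expansion guarantees your $h_{j,s}$ coincide with those of Lemma~\ref{lemma.binomialmoments}.
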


\begin{lemma}[Chernoff bound]\cite{angluin1979fast}\label{lemma.chernoff}
Let $X_1,X_2,\ldots,X_n$ be independent $\{0,1\}$ valued random variables with $\mathbb{P}(X_i = 1) = p_i$. Denote $X = \sum_{i = 1}^n X_i, \mu = \mathbb{E}[X]$. Then,
\begin{align}
\mathbb{P}(X\leq (1-\beta) \mu)  & \leq e^{-\beta^2 \mu /2}\quad 0<\beta \leq 1 \\
\mathbb{P}(X\geq (1+\beta)\mu) & \leq \begin{cases} e^{-\frac{\beta^2 \mu}{2+\beta}} \leq e^{-\frac{\beta\mu}{3}} & \beta >1 \\ e^{-\frac{\beta^2 \mu}{3}} & 0<\beta \leq 1 \end{cases} 
\end{align}
\end{lemma}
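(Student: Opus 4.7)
The plan is to prove the Chernoff bound by the standard exponential moment (Chernoff--Cram\'er) method and then reduce the raw MGF estimate to the cleaner forms stated in the lemma. First I would apply Markov's inequality in the form $\mathbb{P}(X\ge a)\le e^{-ta}\mathbb{E}[e^{tX}]$ for $t>0$, using independence of the $X_i$ together with the pointwise inequality $1+x\le e^x$ to bound the moment generating function uniformly in the individual probabilities:
\begin{align}
\mathbb{E}[e^{tX}]=\prod_{i=1}^n\bigl(1+p_i(e^t-1)\bigr)\le \exp\bigl(\mu(e^t-1)\bigr).
\end{align}
This is the only probabilistic step; the rest is scalar calculus.

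For the upper tail I would take $a=(1+\beta)\mu$ and optimize by choosing $t=\ln(1+\beta)>0$, which yields
\begin{align}
\mathbb{P}(X\ge (1+\beta)\mu)\le \left(\frac{e^\beta}{(1+\beta)^{1+\beta}}\right)^{\!\mu}.
\end{align}
The remaining work is to verify the scalar inequality $\beta-(1+\beta)\ln(1+\beta)\le -\beta^2/(2+\beta)$ for all $\beta>0$; I would do this by setting $\psi(\beta)=\beta-(1+\beta)\ln(1+\beta)+\beta^2/(2+\beta)$, noting $\psi(0)=\psi'(0)=0$, and checking $\psi''(\beta)\le 0$ on $[0,\infty)$ (which reduces, after clearing denominators, to a polynomial inequality in $\beta$). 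The two further weakenings are trivial: $e^{-\beta^2/(2+\beta)}\le e^{-\beta/3}$ for $\beta>1$ since $\beta^2/(2+\beta)\ge \beta/3$ there, and $e^{-\beta^2/(2+\beta)}\le e^{-\beta^2/3}$ for $\beta\in(0,1]$ since $2+\beta\le 3$.

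For the lower tail I would apply Markov to $e^{-tX}$ with $t>0$, obtaining by the same MGF estimate
\begin{align}
\mathbb{P}(X\le(1-\beta)\mu)\le \exp\bigl(\mu(e^{-t}-1)+t(1-\beta)\mu\bigr),
\end{align}
then optimize by taking $t=-\ln(1-\beta)$ for $\beta\in(0,1]$ to reach $\left(e^{-\beta}/(1-\beta)^{1-\beta}\right)^\mu$. Reducing this to $e^{-\beta^2\mu/2}$ is equivalent to the one-variable inequality $g(\beta):=(1-\beta)\ln(1-\beta)+\beta-\beta^2/2\ge 0$ on $[0,1)$; I would verify this by checking $g(0)=0$ and $g'(\beta)=-\ln(1-\beta)-\beta\ge 0$, which is the familiar $-\ln(1-\beta)\ge \beta$.

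The probabilistic skeleton is completely routine, so the substance of the proof is entirely in the two analytic lemmas about $\psi$ and $g$. I expect the main obstacle to be making the upper-tail inequality $\beta-(1+\beta)\ln(1+\beta)\le -\beta^2/(2+\beta)$ genuinely hold for \emph{all} $\beta>0$ rather than only for small $\beta$: a Taylor expansion handles a neighborhood of zero, but confirming the bound uniformly requires the convexity/monotonicity argument sketched above. Once this scalar estimate is in hand, the lemma follows immediately in both tails.
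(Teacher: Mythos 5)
Your proposal is the standard Chernoff--Cram\'er argument and it is correct: the MGF bound, the choices $t=\ln(1+\beta)$ and $t=-\ln(1-\beta)$, and the two scalar inequalities (the upper-tail one reduces to $(2+\beta)^3\ge 8(1+\beta)$ after computing $\psi''(\beta)=-\tfrac{1}{1+\beta}+\tfrac{8}{(2+\beta)^3}$, which holds with equality only at $\beta=0$) all check out, and the weakenings for $\beta>1$ and $0<\beta\le 1$ are immediate. The paper does not prove this lemma at all --- it is imported verbatim from Angluin--Valiant --- so there is nothing to compare against; your write-up would serve as a self-contained proof.
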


\begin{lemma}\cite{strukov1977mathematical}\label{lemma.strukov}
For any continuous function $f: \mathbb{R} \mapsto \mathbb{R}$ and any random variable $X$ taking values in $\mathbb{R}$, we have
\begin{align}
|\mathbb{E}[f(X)] - f(\mathbb{E}[X])| & \leq 3 \cdot \omega^2 \left(f, \frac{\sqrt{\mathsf{Var}(X)}}{2} \right). 
\end{align}
If $f$ is only defined on an interval $[a,b]$ that is a strict subset of $\mathbb{R}$, the result holds with the constant $3$ replaced by $15$. Here $\omega^r(f,t) \triangleq \sup_{0<h\leq t} \| \Delta_h^r f\|$, where $\Delta_h^r f(x) = \sum_{k = 0}^r (-1)^k {r \choose k} f(x + r(h/2) - kh)$, and $\Delta^r_h f(x) = 0$ if $x + rh/2$ or $x-rh/2$ is not inside the domain of $f$.  
\end{lemma}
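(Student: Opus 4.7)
My plan is to follow the $K$-functional framework demonstrated in Section~\ref{sec.kfunctional}. For any $g$ with $g'\in\text{A.C.}_{\text{loc}}$ the triangle inequality yields
\begin{align*}
|\mathbb{E} f(X) - f(\mathbb{E} X)| \leq 2\|f-g\|_\infty + |\mathbb{E} g(X) - g(\mathbb{E} X)|.
\end{align*}
Letting $\mu=\mathbb{E}[X]$ and writing $g(X) - g(\mu) = g'(\mu)(X-\mu) + \int_\mu^X (X-t)g''(t)\,dt$, the first-order term vanishes in expectation, giving $|\mathbb{E} g(X) - g(\mu)| \leq \tfrac12\|g''\|_\infty\mathsf{Var}(X)$. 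Taking the infimum over admissible $g$ produces $|\mathbb{E} f(X)-f(\mu)| \leq 2K_2(f,\mathsf{Var}(X)/4)$, which by Lemma~\ref{lemma.kfunctionalequiva} is bounded by $2c_2\,\omega^2(f,\tfrac12\sqrt{\mathsf{Var}(X)})$ for some universal constant $c_2$.

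To recover the sharp constant $3$ rather than a generic $2c_2$, I would replace the abstract infimum by an explicit second-order Steklov smoother. With $h = \tfrac12\sqrt{\mathsf{Var}(X)}$, one can take
\begin{align*}
g_h(x) = \frac{2}{h^2}\int_0^h\!\!\int_0^h \bigl[2f(x+u+v) - \tfrac12 f(x+2u+2v)\bigr]\,du\,dv,
\end{align*}
or any equivalent second-order average whose defect $f-g_h$ and second derivative $g_h''$ can be expressed directly as integrated second symmetric differences of $f$. Bounding both $\|f-g_h\|_\infty$ and $h^2\|g_h''\|_\infty$ by small multiples of $\omega^2(f,h)$, then substituting $\mathsf{Var}(X)=4h^2$ into $2\|f-g_h\|_\infty+\tfrac12\|g_h''\|_\infty\mathsf{Var}(X)$ and collecting terms, should reproduce the constant $3$.

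For the bounded-interval case $f:[a,b]\to\mathbb{R}$, the shift $x+2u+2v$ in $g_h$ may fall outside $[a,b]$ near the endpoints, so I would first extend $f$ to a slightly larger interval. A convenient extension is the unique quadratic interpolant of $f$ at three equispaced boundary nodes on each side; the extended function $\tilde f$ satisfies $\omega^2(\tilde f,h)\lesssim \omega^2(f,h)$ with an explicit small constant. Running the previous argument on $\tilde f$ and propagating the extension constant yields the enlarged constant $15$. The main obstacle is the explicit constant tracking in these last two steps: the $K$-functional/modulus equivalence delivers the bound up to \emph{some} universal constant essentially for free, but pinning down exactly $3$ and $15$ requires choosing the right Steklov weights and, in the bounded case, a boundary extension whose cost in the second modulus can be controlled explicitly.
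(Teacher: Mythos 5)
Your skeleton is the paper's: smooth $f$, apply the triangle inequality to get $2\|f-g\|+\tfrac12\|g''\|\mathsf{Var}(X)$, and choose $g$ to be a second-order Steklov average at scale $h=\tfrac12\sqrt{\mathsf{Var}(X)}$. But the two steps you defer are exactly where the lemma lives, and your concrete proposals for them do not work as written. The paper takes $f_{hh}=f\ast K_h\ast K_h$ with the \emph{centered} box kernel $K_h=\tfrac1h\mathbbm{1}_{[-h/2,h/2]}$. This choice is what makes the constants come out: $f-f_{hh}$ is an average of centered second differences with step at most $h$, giving $\|f-f_{hh}\|\le\tfrac12\omega^2(f,h)$, and $f_{hh}''(x)=h^{-2}(f(x+h)+f(x-h)-2f(x))$ exactly, giving $\|f_{hh}''\|\le h^{-2}\omega^2(f,h)$; then $2\cdot\tfrac12+\tfrac12\cdot h^{-2}\cdot 4h^2=3$. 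Your one-sided smoother $g_h(x)=\tfrac{2}{h^2}\int_0^h\int_0^h[2f(x+u+v)-\tfrac12 f(x+2u+2v)]\,du\,dv$ has defect $f(x)-g_h(x)=\tfrac{1}{h^2}\int_0^h\int_0^h[f(x)-4f(x+u+v)+f(x+2(u+v))]\,du\,dv$, whose integrand is not a second difference (the weights $1,-4,1$ do not sum to zero against the middle coefficient $-2$), so it cannot be bounded by $\omega^2$ at all; even after fixing the weights, the translate $x+2(u+v)$ reaches $x+4h$, so you would pick up $\omega^2(f,2h)\le 4\omega^2(f,h)$ losses and overshoot the constant $3$.

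For the interval case, your quadratic-interpolant extension is not known to control the second modulus: a second difference straddling an endpoint mixes values of $f$ with values of the extending quadratic, and matching three nodal values gives no bound on such mixed differences for general step sizes. The paper instead subtracts the linear interpolant of $f$ on $[a,b]$ and extends the remainder \emph{oddly} about each endpoint and then periodically with period $2(b-a)$ (Timan); odd reflection is precisely the construction for which straddling second differences telescope into interior ones, yielding the explicit factor $5$ and hence $3\times 5=15$. Without that specific reflection argument (or an equally explicit substitute), the factor $15$ is not obtained.
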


\begin{lemma}\cite[Section 2.4]{Tsybakov2008}\label{lemma.tsybakovinequalities}
Suppose $P,Q$ are both probability measures, and $P \ll Q$. Introduce the following divergence functionals:
\begin{enumerate}
\item Total variation distance:
\begin{align}
V(P,Q) & = \frac{1}{2} \mathbb{E}_Q \left| \frac{dP}{dQ} -1 \right|;
\end{align}
\item Hellinger distance:
\begin{align}
H(P,Q) & = \left( \mathbb{E}_Q \left( \sqrt{\frac{dP}{dQ}} -1 \right)^2 \right)^{1/2};
\end{align}
\item Kullback--Leibler (KL) divergence:
\begin{align}
D(P,Q) & = \mathbb{E}_Q \left( \frac{dP}{dQ} \ln \frac{dP}{dQ} \right);
\end{align}
\item $\chi^2$ divergence:
\begin{align}
\chi^2(P,Q) & = \mathbb{E}_Q \left( \frac{dP}{dQ} -1 \right)^2 \\
& = \mathbb{E}_Q \left( \frac{dP}{dQ} \right)^2 -1. 
\end{align} 
\end{enumerate}
Then, we have the following upper and lower bounds on the KL divergence:
\begin{align}
D(P,Q) & \leq \ln\left( 1 + \chi^2(P,Q) \right) \\
D(P,Q) & \geq 2 V^2(P,Q) \\
D(P,Q) & \geq H^2(P,Q). 
\end{align}
\end{lemma}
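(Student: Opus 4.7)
All three inequalities reduce to the elementary logarithm bound $-\ln y \geq 1 - y$ for $y > 0$ (equivalently $\ln y \leq y - 1$), combined with change-of-measure and, where appropriate, Jensen's inequality. The unifying identity I would lean on is $D(P,Q) = \mathbb{E}_P[\ln(dP/dQ)] = -\mathbb{E}_P[\ln(dQ/dP)]$.

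For the $\chi^2$ upper bound, applying Jensen's inequality to the concave function $\ln$ with expectations taken under $P$ gives
\begin{align}
D(P,Q) = \mathbb{E}_P\left[\ln\frac{dP}{dQ}\right] \leq \ln \mathbb{E}_P\left[\frac{dP}{dQ}\right] = \ln \mathbb{E}_Q\left[\left(\frac{dP}{dQ}\right)^2\right] = \ln\bigl(1+\chi^2(P,Q)\bigr),
\end{align}
where the middle equality is a change of measure and the last identity uses $\chi^2(P,Q) = \mathbb{E}_Q[(dP/dQ)^2]-1$. For the Hellinger lower bound I would use the sharper pointwise inequality $-\ln x \geq 2(1-\sqrt{x})$ for $x > 0$, which follows from $-\ln y \geq 1-y$ applied to $y = \sqrt{x}$. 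Integrating this pointwise inequality against $P$ with $x = dQ/dP$ yields
\begin{align}
D(P,Q) \geq 2\mathbb{E}_P\left[1 - \sqrt{\frac{dQ}{dP}}\right] = 2 - 2\mathbb{E}_Q\left[\sqrt{\frac{dP}{dQ}}\right] = H^2(P,Q),
\end{align}
by another change of measure and the definition of $H^2$.

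Pinsker's inequality $D(P,Q) \geq 2V^2(P,Q)$ is the most delicate of the three, and I plan the standard two-step reduction. First, I establish the binary case: define $d(p\|q) := p\ln(p/q)+(1-p)\ln((1-p)/(1-q))$ and set $\phi(p) := d(p\|q) - 2(p-q)^2$; direct calculus gives $\phi(q)=\phi'(q)=0$ and $\phi''(p) = 1/(p(1-p)) - 4 \geq 0$ on $[0,1]$, so $\phi \geq 0$ and binary Pinsker follows. Second, I take the Scheff\'e set $A := \{dP/dQ \geq 1\}$, which satisfies $V(P,Q) = P(A)-Q(A)$, and invoke the data-processing inequality for KL to restrict to the binary coarsening $\{A,A^c\}$: this gives $D(P,Q) \geq d(P(A)\|Q(A)) \geq 2(P(A)-Q(A))^2 = 2V^2(P,Q)$.

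The main obstacle is the Pinsker step, and specifically the data-processing reduction $D(P,Q) \geq d(P(A)\|Q(A))$. I would derive it from the log-sum inequality, which is itself a direct corollary of Jensen applied to the convex function $x\mapsto x\ln x$. The rest is routine bookkeeping: the Scheff\'e identity for total variation follows from $V(P,Q) = \frac{1}{2}\mathbb{E}_Q|dP/dQ - 1|$ by splitting the integrand on the sign of $dP/dQ - 1$.
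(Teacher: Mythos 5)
Your proposal is correct, but note that the paper does not actually prove this lemma: it is stated as a citation to Tsybakov's book (Section 2.4), so there is no in-paper argument to compare against. What you have written is a complete, self-contained derivation of all three bounds, and each step checks out: the $\chi^2$ bound via Jensen under $P$ plus the change of measure $\mathbb{E}_P[dP/dQ]=\mathbb{E}_Q[(dP/dQ)^2]$; the Hellinger bound via $-\ln x\ge 2(1-\sqrt{x})$ together with the identity $H^2(P,Q)=2-2\,\mathbb{E}_Q[\sqrt{dP/dQ}]$; and Pinsker via the binary case (your second-derivative computation $\phi''(p)=1/(p(1-p))-4\ge 0$ is the standard one) followed by data processing on the Scheff\'e set. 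One small point worth making explicit in the Hellinger step: the lemma only assumes $P\ll Q$, not $Q\ll P$, so $dQ/dP$ is not globally defined; however, you only evaluate it under $\mathbb{E}_P$, and $P$-almost surely one may take $dQ/dP:=(dP/dQ)^{-1}$ on the set $\{dP/dQ>0\}$, which carries full $P$-measure, so the argument goes through. With that caveat recorded, your proof is a valid (and arguably useful, since it makes the paper self-contained) replacement for the external citation.
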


\section{Proofs of main theorems and lemmas} \label{sec.proofmaintheoremslemmas}
\subsection{Proof of Theorem~\ref{thm.jackknife}}

Recognizing $\mathbb{E}_p \hat{f}_r$ as linear combination of operators, the first and second parts of Theorem~\ref{thm.jackknife} follow from~\cite[Theorem 9.3.2.]{Ditzian--Totik1987}, the third part follows from~\cite[Corollary 9.3.8.]{Ditzian--Totik1987}, and the last part follows from~\cite{Totik1994}.

\subsection{Proof of Theorem~\ref{thm.delete1jackknifeworst}}

Define $f\in C(0,1]$ to be the piecewise linear interpolation function at nodes $\left \{ \left( m^{-1},\frac{1+(-1)^{m}}{2} \right) , m\in \mathbb{N}_+ \right \}$. Clearly $f(m^{-1}) = 1$ when $m$ is even, $f(m^{-1}) = 0$ when $m$ is odd, and $\| f\| \leq 1$. We set $f(0) = 0$. 

We have
\begin{align}
& |\EE_p \hat{f}_r - f(p)| \nonumber \\ 
= & \left| \sum_{i=1}^r C_i\EE_p f(\hat{p}_{n+i-r}) - f(p)\right| \\
\ge & \left| \sum_{n+i-r\text{ is even}} C_i\EE_pf(\hat{p}_{n+i-r})\right| \nonumber \\
& - \left| \sum_{n+i-r\text{ is odd}} C_i\EE_pf(\hat{p}_{n+i-r})\right|  - |f(p)|\\
\ge & \sum_{n+i-r\text{ is even}} |C_i|\EE_pf(\hat{p}_{n+i-r}) \nonumber \\
& - \sum_{n+i-r\text{ is odd}} |C_i|\EE_pf(\hat{p}_{n+i-r})  - 1
\end{align}
where in the last step we have used the fact that $C_1,\cdots,C_r$ have alternating signs, and $f\geq 0$. 

When $n+i-r$ is even and $p = n^{-1}$, 
\begin{align}
\EE_p f(\hat{p}_{n+i-r}) &\ge f \left (\frac{1}{n+i-r} \right )\cdot \mathbb{P}(\mathsf{B}(n+i-r,p)=1) \\
&= (n+i-r)p\left(1-p\right)^{n+i-r-1}\\
&\ge \frac{1}{e}(1-o(1)).
\end{align}

When $n+i-r$ is odd and $p=n^{-1}$, noting that $f(0) = f\left( \frac{1}{n+i-r} \right) = 0$, we have
\begin{align}
\EE_p f(\hat{p}_{n+i-r}) &\le \|f\|\cdot \mathbb{P}(\mathsf{B}(n+i-r,p)\geq 2) \\
& \leq \left(1-\frac{2}{e}\right)(1+o(1)).
\end{align}

Since $\sum_{i=1}^r C_i=1$, we have $\sum_{n+i-r\text{ is odd}}|C_i|=(1+o(1))\sum_{n+i-r\text{ is even}}|C_i| \asymp n^{r-1}$.  Combining these together, we arrive at
\begin{align}
|\EE_p \hat{f}_r - f(p)| &\gtrsim \sum_{n+i-r\text{ is even}}|C_i| \left( \frac{1}{e} - \left( 1- \frac{2}{e} \right) - o(1) \right) \\
& \gtrsim n^{r-1}. 
\end{align}
which completes the proof of the first claim. 

As for the second claim, it suffices to replace the function $f$ on interval $[0,1/n]$ by the linear interpolation function interpolating $f(0) = 0$ and $f(1/n) = \frac{1 + (-1)^n}{2}$ and keep other parts of the function intact. Consequently, after this modification $f \in C[0,1]$. 

Now we prove the variance part. 

Construct $f\in C[0,1]$ to be a piecewise linear interpolation function at the following nodes: $f(0) = 0, f\left( \frac{1}{n} \right) = f\left( \frac{2}{n-1} \right) = 1, f\left( \frac{1}{n-1}\right) = f\left(\frac{2}{n}\right) = -1, f(1) = 0$.  

It follows from straightforward algebra and Lemma~\ref{lemma.2jackkniferepres} that
\begin{align}
\hat{f}_2 & = \begin{cases} 0 & X = 0\\ 2n-2 + n^{-1} & X = 1 \\ -2n + 5 -\frac{4}{n} & X = 2 \end{cases}. 
\end{align}

It follows from the definition of variance that
\begin{align}
\mathsf{Var}_p(\hat{f}_2) & = \mathbb{E}_p \left( \hat{f}_2 - \mathbb{E}_p \hat{f}_2 \right)^2 \\
& = \inf_a \mathbb{E}_p \left( \hat{f}_2 -a \right)^2 \\
& \geq \inf_a \big( \mathbb{P}(\mathsf{B}(n,p) = 1) ( 2n -2 + n^{-1} -a)^2 \nonumber \\ 
& \quad + \mathbb{P}(\mathsf{B}(n,p) = 2) (-2n+5-4/n-a)^2 \big)  \\
& = \inf_a \big( np(1-p)^{n-1}( 2n -2 + n^{-1} -a)^2 \nonumber \\ 
& \quad + \frac{n(n-1)}{2}p^2 (1-p)^{n-2} (-2n+5-4/n-a)^2 \big)
\end{align}
Setting $p = 1/n$, we have
\begin{align}
\mathsf{Var}_p(\hat{f}_2) & \geq \frac{1}{2}\left( 1-\frac{1}{n}\right)^{n-1} \inf_a \left( ( 2n -2 + n^{-1} -a)^2 \right. \nonumber \\ 
& \quad \left. + (-2n+5-4/n-a)^2 \right).
\end{align}
The infimum is achieved when 
\begin{align}
a & = \frac{(2n -2 + n^{-1}) + (-2n+5-4/n)}{2} \\
& = \frac{3}{2} \left( 1- \frac{1}{n} \right).
\end{align}
Hence,
\begin{align}
\| \mathsf{Var}_p(\hat{f}_2) \| & \geq  \frac{1}{2}\left( 1-\frac{1}{n}\right)^{n-1} \cdot 2 \nonumber \\ 
& \quad \cdot \left( 2n -2 + \frac{1}{n} - \frac{3}{2} \left( 1-\frac{1}{n} \right) \right)^2 \\
& \geq \left( 1-\frac{1}{n}\right)^{n-1} n^2 \\
& \geq \frac{n^2}{e}. 
\end{align}
where we have used $n\geq 4$ and $\left( 1- \frac{1}{n} \right)^{n-1} \geq e^{-1}$.

\subsection{Proof of Lemma~\ref{lemma.Taylorjackknife}}

Since $f$ satisfies condition $D_s$, it admits the Taylor expansion:
\begin{align}
f(x) & = f(p) + \sum_{u = 1}^{s-1} \frac{f^{(u)}(p)}{u!} (x-p)^u + R_{s}(x;p) 
\end{align}
Applying the $r$-jackknife estimator on it, we have
\begin{align}
\mathbb{E}_p \hat{f}_r & = f(p) + \sum_{u = 1}^{s-1} \frac{f^{(u)}(p)}{u!} \sum_{i = 1}^r C_i \mathbb{E}_p (\hat{p}_{n_i} - p)^u   \nonumber \\ 
&\quad + \sum_{i = 1}^r C_i \mathbb{E}_p R_{s}(\hat{p}_{n_i};p) \\
& = f(p) + \sum_{u = r+1}^{s-1} \frac{f^{(u)}(p)}{u!} \sum_{i = 1}^r C_i \mathbb{E}_p (\hat{p}_{n_i} - p)^u\nonumber \\ 
& \quad + \sum_{i = 1}^r C_i \mathbb{E}_p R_{s}(\hat{p}_{n_i};p), \label{eqn.entireexpansion}
\end{align}
where in the last step we have used Lemma~\ref{lemma.higherpower} and Lemma~\ref{lemma.binomialmoments}. By convention $\sum_{a}^b = 0$ if $a>b$. 

Denote $E_u = \sum_{i = 1}^r C_i \mathbb{E}_p (\hat{p}_{n_i} - p)^u, u\geq r+1$, it follows from Lemma~\ref{lemma.centralmomentcoeffbounds} that 
\begin{align}
E_u & = \sum_{i = 1}^r C_i \sum_{j = 1}^{\lfloor u/2 \rfloor} h_{j,u}(p) \frac{1}{n_i^{u-j}} = \sum_{j = 1}^{\lfloor u/2 \rfloor} h_{j,u}(p) \sum_{i = 1}^r \frac{C_i}{n_i^{u-j}}.
\end{align}
Note that 
\begin{align}
\sum_{i = 1}^r \frac{C_i}{n_i^{u-j}} \neq 0
\end{align}
if and only if $u-j\geq r$, and when that is the case, we have
\begin{align}
\left | \sum_{i = 1}^r \frac{C_i}{n^{u-j}} \right | & \leq (u-j-1)^{r-1} \frac{1}{n_1^{u-j}}
 \leq \frac{u^{r-1}}{n_1^{u-j}} \leq \frac{u^{r-1}}{n_1^r} \label{eqn.boundeachterm}
\end{align}

Since $r$ is fixed, it follows from (\ref{eqn.boundeachterm}), Lemma~\ref{lemma.centralmomentcoeffbounds} and condition $D_s$ with parameter $L$ that 
\begin{align}
\left | \sum_{u = r+1}^{s-1} \frac{f^{(u)}(p)}{u!} E_u \right | \lesssim_{r,s,L} \frac{1}{n^r}. 
\end{align}

It follows from the integral form of Taylor remainder that
\begin{align}
R_{s}(x;p) & = \frac{1}{(s-1)!} \int_p^x (x-t)^{s-1} f^{(s)}(t) dt
\end{align}
Thus we know that when  $s$ is even, we have
\begin{align}
    R_{s}(x;p) 
& =  \frac{1}{(s-1)!} \int_0^1 |x-t|^{s-1} f^{(s)}(t) \nonumber \\ 
& \quad \cdot \mathbbm{1}_{t \in [\min\{x,p\}, \max\{x,p\}]} dt.
\end{align}
When $s$ is odd, we have
\begin{align}
    R_{s}(x;p) 
& =  \frac{1}{(s-1)!} \int_0^1 |x-t|^{s-1} f^{(s)}(t)\nonumber \\ 
& \quad \cdot \mathbbm{1}_{t \in [\min\{x,p\}, \max\{x,p\}]} (-1)^{\mathbbm{1}_{x<p}} dt. 
\end{align}
When $s$ is even, we further have
\begin{align}
& \mathbb{E}_p |\hat{p}_{n_i} -t|^{s-1} \mathbbm{1}_{t \in [\min\{\hat{p}_{n_i},p\}, \max\{\hat{p}_{n_i},p\}]} \nonumber \\ 
=&  \begin{cases} \mathbb{E}_p (\hat{p}_{n_i}-t)_+^{s-1} & t\geq p \\ \mathbb{E}_p (\hat{p}_{n_i}-t)_{-}^{s-1} & t< p    \end{cases}
\end{align}
where $(x)_+ = \max\{x,0\}, (x)_{-} = \max\{-x,0\}$. 

When $s$ is odd, 
\begin{align}
& \mathbb{E}_p |\hat{p}_{n_i} -t|^{s-1} \mathbbm{1}_{t \in [\min\{\hat{p}_{n_i},p\}, \max\{\hat{p}_{n_i},p\}]} (-1)^{\mathbbm{1}_{\hat{p}_{n_i}<p}} \nonumber \\ 
= &  \begin{cases} \mathbb{E}_p (\hat{p}_{n_i}-t)_+^{s-1} & t\geq p \\ -\mathbb{E}_p (\hat{p}_{n_i}-t)_{-}^{s-1} & t< p    \end{cases}
\end{align}

Hence, 
\begin{align}
& \sum_{i = 1}^r C_i \mathbb{E}_p R_{s}(\hat{p}_{n_i};p) \nonumber \\ 
 = & \frac{1}{(s-1)!} \Bigg ( \int_0^p f^{(s)}(t) \left( \sum_{i = 1}^r C_i \mathbb{E}_p (\hat{p}_{n_i}-t)_{-}^{s-1} \right.\nonumber \\ 
 & \cdot (-1)^s \Bigg) dt \nonumber \\
& + \int_p^1 f^{(s)}(t) \left( \sum_{i =1}^r C_i \mathbb{E}_p (\hat{p}_{n_i} -t)^{s-1}_{+} \right) dt\Bigg ) \\
 \lesssim_{r,s,L} & \int_0^p \left|\sum_{i=1}^r C_i\EE_p(\hat{p}_{n_i}-t)_-^{s-1}\right|dt \nonumber \\ & + \int_p^1 \left|\sum_{i=1}^r C_i\EE_p(\hat{p}_{n_i}-t)_+^{s-1}\right|dt,
\end{align}
where we have used the assumption that $\|f^{(s)}\| \leq L$.

\subsection{Proof of Lemma~\ref{lemma.A_nr}}

First we recall the following additive Chernoff bound: for $n\hat{p}_n\sim \mathsf{B}(n,p)$ and $t\ge p$, we have
\begin{align}
\mathbb{P}(\hat{p}_n>t) \le \exp(-nD(t\|p))
\end{align}
where $D(t\|p)$ denotes the KL divergence between the binary distributions $(t,1-t)$ and $(p,1-p)$. Note that
\begin{align}
D(t\|p) &\ge \frac{1}{2}\min_{\xi\in [p,t]} \frac{d^2D(u\|p)}{du^2}\bigg|_{u=\xi}\cdot (t-p)^2\\
&= \min_{\xi\in[p,t]}\frac{(t-p)^2}{2\xi(1-\xi)}\\
&\ge \min_{\xi\in[p,t]}\frac{(t-p)^2}{2\xi}\\
&= \frac{(t-p)^2}{2t}
\end{align}
we arrive at the following inequality:
\begin{align}\label{eq.chernoff}
\mathbb{P}(\hat{p}_n>t) \le \exp(-nD(t\|p)) \le \exp(-\frac{n(t-p)^2}{2t}).
\end{align}

Now we prove the lemma. Let $X_1,\cdots,X_n$ be i.i.d $\mathsf{Bern}(p)$ random variables, and consider the following coupling between $\hat{p}_{n-s},\cdots,\hat{p}_n$: for $k=0,\cdots,s$, define
\begin{align}
\hat{p}_{n-k} = \frac{1}{n-k}\sum_{i=1}^{n-k}X_i.
\end{align}
In other words, we have
\begin{align}
\hat{p}_{n-k} = \hat{p}_{n-s} + \frac{1}{n-k}\sum_{i=k}^{s-1} (X_{n-i}-\hat{p}_{n-s}).
\end{align}

Now define $g_{u,t}(x)=(x-t)_+^{u}$, by Taylor expansion we have
\begin{align}
& \Delta^s A_{n,u}(t) \nonumber \\
& \quad = \EE_p \sum_{k=0}^s (-1)^k\binom{s}{k}g_{u,t}(\hat{p}_{n-k})\\
&= \EE_p \sum_{k=0}^s (-1)^k\binom{s}{k} \left(\sum_{j=0}^{u-1} \frac{g_{u,t}^{(j)}(\hat{p}_{n-s})}{j!}(\hat{p}_{n-k}-\hat{p}_{n-s})^j\right.\nonumber\\
&\left.\qquad\qquad +\frac{g_{u,t}^{(u)}(\xi_k)}{u!}(\hat{p}_{n-k}-\hat{p}_{n-s})^{u}\right)\\
&= \EE_p \sum_{k=0}^s (-1)^k\binom{s}{k} \left(\sum_{j=0}^{u} \frac{g_{u,t}^{(j)}(\hat{p}_{n-s})}{j!}(\hat{p}_{n-k}-\hat{p}_{n-s})^j\right.\nonumber\\
&\left.\quad +\frac{g_{u,t}^{(u)}(\xi_k)-g_{u,t}^{(u)}(\hat{p}_{n-s})}{u!}(\hat{p}_{n-k}-\hat{p}_{n-s})^{u}\right)\\
&= \EE_p \sum_{j=0}^{u}\frac{g_{u,t}^{(j)}(\hat{p}_{n-s})}{j!}\sum_{k=0}^s (-1)^k\binom{s}{k} \nonumber \\ 
& \quad \cdot \EE_p[(\frac{1}{n-k}\sum_{i=k}^{s-1}(X_{n-i}-\hat{p}_{n-s}))^j|X^{n-s}] \nonumber\\
&\quad + \EE_p \sum_{k=0}^s (-1)^k\binom{s}{k}\frac{g_{u,t}^{(u)}(\xi_k)-g_{u,t}^{(u)}(\hat{p}_{n-s})}{u!}\nonumber \\ 
& \quad \cdot (\hat{p}_{n-k}-\hat{p}_{n-s})^{u}\\
&\equiv \EE_p\sum_{j=0}^{u}\frac{g_{u,t}^{(j)}(\hat{p}_{n-s})}{j!}A_j + \EE_pB_{u}.\label{eq.Taylor_exp}
\end{align}
Note that $g_{u,t}(x)$ is in fact not $u$-times differentiable at $x=t$, but with the convention that $g_{u,t}^{(u)}(t)$ can stand for any number in $[0,u!]$, the previous formula remains valid.

\subsubsection{Non-remainder term $A_j$}
Further define
\begin{align}
a_j(t) \triangleq \EE_p (X_n-t)^j = p(1-t)^j + (1-p)(-t)^j
\end{align}
we have $a_0(t)=1, a_1(t)=p-t$, and
\begin{align}
A_j &= \sum_{k=0}^{s-1} (-1)^k\binom{s}{k} \frac{1}{(n-k)^j} \nonumber \\ 
& \quad \cdot \sum_{i_1+\cdots+i_{s-k}=j}\binom{j}{i_1\ \cdots \ i_{s-k}}\prod_{l=1}^{s-k} a_{i_l}(\hat{p}_{n-s}).
\end{align}
Denote by ${I}_j$ the set of all multi-indices $i=(i_1,\cdots,i_{t(i)})$ with $t(i)\le s, \sum_{l=1}^{t(i)} i_l=j$ and $i_l\ge 1$, then for any $i\in\mathcal{I}_j$, the coefficient of $\prod_{l=1}^{t(i)} a_{i_l}(\hat{p}_{n-s})$ in $A_j$ is
\begin{align}
b_i &= \binom{j}{i_1\ \cdots \ i_{t(i)}}\sum_{k=0}^{s-1}(-1)^k\binom{s}{k}\frac{1}{(n-k)^j} \nonumber \\ 
& \quad \cdot \binom{s-k}{d_1\ \cdots\ d_{t'}\ s-k-\sum_{l'=1}^{t'} d_{l'}}\\
&= \binom{j}{i_1\ \cdots \ i_{t(i)}}\binom{t(i)}{d_1\ \cdots \ d_{t'}}\sum_{k=0}^{s-1}(-1)^k\binom{s}{k}\nonumber \\ 
& \quad \cdot \frac{1}{(n-k)^j}\binom{s-k}{t(i)}\\
&=  \binom{j}{i_1\ \cdots \ i_{t(i)}}\binom{t(i)}{d_1\ \cdots \ d_{t'}}\binom{s}{t(i)}\sum_{k=0}^{s-t(i)}(-1)^k\nonumber \\ 
& \quad \cdot\binom{s-t(i)}{k}\frac{1}{(n-k)^j}\\
&=  \binom{j}{i_1\ \cdots \ i_{t(i)}}\binom{t(i)}{d_1\ \cdots \ d_{t'}}\binom{s}{t(i)}\cdot \Delta^{s-t(i)}n^{-j}
\end{align}
where $d_1,\cdots,d_{t'}$ is the nonzero histograms of $i=(i_1,\cdots,i_{t(i)})$ with $\sum_{l'=1}^{t'} d_{l'}=t(i)$. By the mean value theorem of backward differences, we have
\begin{align}
|\Delta^{s-t(i)}n^{-j}| & \le \max_{x\in [n-s+t(i),n]} \left|\frac{d^{s-t(i)}}{dx^{s-t(i)}}(x^{-j})\right| \nonumber \\ 
&\lesssim_s n^{-(j+s-t(i))}
\end{align}
and thus
\begin{align}
|b_i| \le c(i)n^{-(j+s-t(i))}
\end{align}
where the constant $c(i)$ does not depend on $n$ or $p$.

Now for any $j=0,1,\cdots,u$, we have
\begin{align}
&\left|\EE_p\frac{g_{u,t}^{(j)}(\hat{p}_{n-s})}{j!}A_j\right| \nonumber \\ 
\asymp &\left|\EE_p\left[(\hat{p}_{n-s}-t)_+^{u-j}\cdot \sum_{i\in I_j}b_i\prod_{l=1}^{t(i)}a_{i_l}(\hat{p}_{n-s})\right]\right|\\
\le & \sum_{i\in I_j} c(i)n^{-(j+s-t(i))}\nonumber \\ 
& \quad \cdot \EE_p\left[(\hat{p}_{n-s}-t)_+^{u-j}\prod_{l=1}^{t(i)}|a_{i_l}(\hat{p}_{n-s})|\right].
\end{align}
For any $i\in I_j$, denote by $v(i)$ the number of ones in $i=(i_1,\cdots,i_t)$, using $a_1(t)=p-t$ and $|a_j(t)|\le p+t$ yields
\begin{align}
& \EE_p\left[(\hat{p}_{n-s}-t)_+^{u-j}\prod_{l=1}^{t(i)}|a_{i_l}(\hat{p}_{n-s})|\right] \nonumber \\ 
\le & \EE_p\left[(\hat{p}_{n-s}-t)_+^{u-j} |\hat{p}_{n-s}-p|^{v(i)}|\hat{p}_{n-s}+p|^{t(i)-v(i)}\right].
\end{align}

We distinguish into two cases. If $np\ge 1$, by Holder's inequality $\EE[XYZ]^3\le \EE[|X|^3]\EE[|Y|^3]\EE[|Z|^3]$ we obtain
\begin{align}
& \EE_p\left[(\hat{p}_{n-s}-t)_+^{u-j}\prod_{l=1}^{t(i)}|a_{i_l}(\hat{p}_{n-s})|\right] \nonumber \\
& \le \EE_p\left[(\hat{p}_{n-s}-t)_+^{u-j} |\hat{p}_{n-s}-p|^{v(i)}|\hat{p}_{n-s}+p|^{t(i)-v(i)}\right]\\
&\le \EE_p\Big[|\hat{p}_{n-s}-p|^{u-j+v(i)} \mathbbm{1}(\hat{p}_{n-s}\ge t) \nonumber \\ 
& \quad \cdot|\hat{p}_{n-s}+p|^{t(i)-v(i)}\Big]\\
&\le \left(\EE_p|\hat{p}_{n-s}-p|^{3(u-j+v(i))}\right)^{\frac{1}{3}} \nonumber \\ 
& \quad \cdot\left(\EE_p|\hat{p}_{n-s}+p|^{3(t(i)-v(i))}\right)^{\frac{1}{3}}\cdot \sqrt[3]{\mathbb{P}(\hat{p}_{n-s}>t)}\\
&\lesssim_{u,s} (\frac{p}{n})^{\frac{u-j+v(i)}{2}}\cdot p^{t(i)-v(i)}\cdot \exp(-\frac{c_2n(t-p)^2}{t})\\
&= n^{-\frac{u-j+v(i)}{2}}p^{\frac{u-j+2t(i)-v(i)}{2}}\cdot \exp(-\frac{c_2n(t-p)^2}{t})
\end{align}
where in the last step we have used \eqref{eq.chernoff}, and the fact that when $np\ge 1$, we have~\cite{Han--Jiao--Weissman2016minimax}
\begin{align}
\EE_p|\hat{p}_n-p|^k &\lesssim_k (\frac{p}{n})^{\frac{k}{2}}\\
\EE_p|\hat{p}_n+p|^k &\lesssim_k p^k.
\end{align}
As a result, in this case we conclude that
\begin{align}
& \left|\EE_p\frac{g_{u,t}^{(j)}(\hat{p}_{n-s})}{j!}A_j\right| \nonumber \\
\lesssim_{u,s} &\sum_{i\in I_j} c(i)n^{-(j+s-t(i))}\cdot n^{-\frac{u-j+v(i)}{2}} \nonumber \\ 
& \cdot p^{\frac{u-j+2t(i)-v(i)}{2}} \exp(-\frac{c_2n(t-p)^2}{t})\\
=\quad & \sum_{i\in I_j} c(i)n^{-(\frac{u}{2}+s)}p^{\frac{u}{2}}\cdot (np)^{-\frac{j+v(i)-2t(i)}{2}}\nonumber \\ 
& \cdot \exp(-\frac{c_2n(t-p)^2}{t}).
\end{align}
Note that $j=\sum_{k=1}^{t(i)}i_k\ge v(i)+2(t(i)-v(i))$, we have $j+v(i)\ge 2t(i)$, and thus by $np\ge 1$, 
\begin{align}
& \left|\EE_p\frac{g_{u,t}^{(j)}(\hat{p}_{n-s})}{j!}A_j\right| \nonumber \\
&\quad \lesssim_{u,s} \sum_{i\in I_j} c(i)n^{-(\frac{u}{2}+s)}p^{\frac{u}{2}}\cdot \exp(-\frac{c_2n(t-p)^2}{t})\\
&\quad \lesssim_{u,s} n^{-(\frac{u}{2}+s)}p^{\frac{u}{2}}\cdot \exp(-\frac{c_2n(t-p)^2}{t})
\end{align} 
where we have used that both $|I_j|$ and $c(i)$ do not depend on $n$ or $p$ in the last step.

If $np<1$, we first show that
\begin{align}\label{eq.inequality_small_p}
\EE_p [\hat{p}_n^k\mathbbm{1}(\hat{p}_n>t)] \lesssim_k \frac{p}{n^{k-1}}\exp(-\frac{c_2n(t-p)^2}{t}).
\end{align}
In fact, the MGF of $\hat{p}_n$ gives
\begin{align}
\EE_p[e^{\lambda\hat{p}_n}] = (pe^{\frac{\lambda}{n}}+1-p)^n.
\end{align}
Differentiating w.r.t $\lambda$ for $k$ times, for $\lambda>0$ we arrive at

\begin{align}
& \EE_p[\hat{p}_n^k e^{\lambda\hat{p}_n}] \nonumber \\
&  = \frac{d^k}{d\lambda^k}\left[(pe^{\frac{\lambda}{n}}+1-p)^n\right] \\ &\le C_k\sum_{j=1}^k (pe^{\frac{\lambda}{n}})^jn^{j-k}(pe^{\frac{\lambda}{n}}+1-p)^{n-j} \\
&\lesssim_k \frac{pe^{\frac{\lambda}{n}}(pe^{\frac{\lambda}{n}}+1-p)^{n-1}}{n^{k-1}} \nonumber \\ & \quad \cdot \left(1+\left(\frac{npe^{\frac{\lambda}{n}}}{pe^{\frac{\lambda}{n}}+1-p}\right)^{k-1}\right)\\
&\lesssim_k \frac{pe^{\frac{k\lambda}{n}}(pe^{\frac{\lambda}{n}}+1-p)^{n}}{n^{k-1}},
\end{align}
where $C_k$ is a universal constant depending on $k$ only, and in the last step we have used $np<1$. As a result, by Markov's inequality we have for any $\lambda>0$, 
\begin{align}
\EE_p[\hat{p}_n^k\mathbbm{1}(\hat{p}_n>t)] &\le \EE_p[\hat{p}_n^ke^{\lambda(\hat{p}_n-t)}] \nonumber \\ 
& \lesssim_k e^{-\lambda t}\cdot \frac{pe^{\frac{k\lambda}{n}}(pe^{\frac{\lambda}{n}}+1-p)^{n}}{n^{k-1}}.
\end{align}
Specifically, when $t> \frac{2k}{n}$, choosing $\lambda=n\ln \frac{(1-p)t}{(2-t)p}$ yields
\begin{align}
\EE_p[\hat{p}_n^k\mathbbm{1}(\hat{p}_n>t)] & \lesssim_k \frac{p}{n^{k-1}}\left(\frac{(2-t)p}{(1-p)t}\right)^{nt-k}\nonumber \\ & \quad \cdot \left(\frac{1-p}{1-\frac{t}{2}}\right)^n\\
&\le \frac{p}{n^{k-1}}\left(\frac{(2-t)p}{(1-p)t}\right)^{\frac{nt}{2}}\cdot \left(\frac{1-p}{1-\frac{t}{2}}\right)^n\\
&\le \frac{p}{n^{k-1}}\exp(-nD(\frac{t}{2}\|p))\\
&\le \frac{p}{n^{k-1}}\exp(-\frac{c_2n(t-p)^2}{t})
\end{align}
as desired. When $p<t\le \frac{2k}{n}$, we have $\frac{c_2n(t-p)^2}{t}=O(1)$, and \eqref{eq.inequality_small_p} follows from 
\begin{align}
\EE_p [\hat{p}_n^k\mathbbm{1}(\hat{p}_n < t)] \le \EE_p \hat{p}_n^k \lesssim_k \frac{p}{n^{k-1}}.
\end{align}

Now based on \eqref{eq.inequality_small_p}, we have
\begin{align}
& \EE_p\left[(\hat{p}_{n-s}-t)_+^{u-j}\prod_{l=1}^{t(i)}|a_{i_l}(\hat{p}_{n-s})|\right] \nonumber \\
&\quad \le \EE_p\Big[(\hat{p}_{n-s}-t)_+^{u-j} |\hat{p}_{n-s}-p|^{v(i)}\nonumber \\ 
& \qquad \cdot|\hat{p}_{n-s}+p|^{t(i)-v(i)}\Big]\\
&\quad \le \EE_p\Big[|\hat{p}_{n-s}-p|^{u-j+v(i)} \mathbbm{1}(\hat{p}_{n-s}\ge t) \nonumber \\ 
& \qquad \cdot|\hat{p}_{n-s}+p|^{t(i)-v(i)}\Big]\\
&\quad \lesssim_{u,s} \EE_p\left[(\hat{p}_{n-s}^{u-j+t(i)}+p^{u-j+t(i)})\mathbbm{1}(\hat{p}_n>t)\right]\\
&\quad \lesssim_{u,s} \left(\frac{p}{n^{u-j+t(i)-1}}+p^{u-j+t(i)}\right)\nonumber \\ 
& \qquad \cdot \exp(-\frac{c_2n(t-p)^2}{t})\\
&\quad \lesssim_{u,s} \frac{p}{n^{u-j+t(i)-1}}\exp(-\frac{c_2n(t-p)^2}{t}).
\end{align}
As a result, in this case we have
\begin{align}
& \left|\EE_p\frac{g_{u,t}^{(j)}(\hat{p}_{n-s})}{j!}A_j\right| \nonumber \\
&\quad \lesssim_{u,s} \sum_{i\in I_j} c(i)n^{-(j+s-t(i))}\cdot \frac{p}{n^{u-j+t(i)-1}}\nonumber \\ 
& \qquad \cdot \exp(-\frac{c_2n(t-p)^2}{t})\\
&\quad = \sum_{i\in I_j} c(i)n^{-(u+s-1)}p\cdot\exp(-\frac{c_2n(t-p)^2}{t})\\
&\quad \lesssim_{u,s} n^{-(u+s-1)}p\cdot\exp(-\frac{c_2n(t-p)^2}{t}).
\end{align}
Moreover, when $\frac{1}{2}<p\le 1-\frac{1}{n}$, we can use the symmetry $n(1-\hat{p}_n)\sim \mathsf{B}(n,1-p)$ and $|a_j(t)|\le (1-p)+(1-t)$, and then adapt the proof of the first case to conclude that
\begin{align}
& \left|\EE_p\frac{g_{u,t}^{(j)}(\hat{p}_{n-s})}{j!}A_j\right|  \nonumber \\ 
\lesssim_{u,s}  & n^{-(\frac{u}{2}+s)}(1-p)^{\frac{u}{2}}\cdot \exp(-\frac{c_2n(t-p)^2}{1-p}).
\end{align}

In summary, for the non-remainder terms, if $p<\frac{1}{n} $,
\begin{align}\label{eq.non-remainder}
    \left|\EE_p\frac{g_{u,t}^{(j)}(\hat{p}_{n-s})}{j!}A_j\right|  
\lesssim_{u,s} & 
n^{-(u+s-1)}p\cdot \exp(-\frac{c_2n(t-p)^2}{t});
\end{align}
 if $\frac{1}{n}\le p\le\frac{1}{2} $,
\begin{align}
    \left|\EE_p\frac{g_{u,t}^{(j)}(\hat{p}_{n-s})}{j!}A_j\right|  
\lesssim_{u,s} & n^{-(\frac{u}{2}+s)}p^{\frac{u}{2}}\nonumber \\ 
& \cdot  \exp(-\frac{c_2n(t-p)^2}{t});
\end{align}
 if $\frac{1}{2}< p\le 1-\frac{1}{n}$,
\begin{align}
    \left|\EE_p\frac{g_{u,t}^{(j)}(\hat{p}_{n-s})}{j!}A_j\right|  
\lesssim_{u,s} &n^{-(\frac{u}{2}+s)}(1-p)^{\frac{u}{2}}\nonumber \\ 
& \cdot \exp(-\frac{c_2n(t-p)^2}{1-p});
\end{align}

\subsubsection{The remainder term $B_{u}$}
By our convention on $g_{u,t}^{(u)}(\cdot)$ we observe that $g_{u,t}^{(u)}(\xi_k)-g_{u,t}^{(u)}(\hat{p}_{n-s})$ is non-zero only if
\begin{align}
(\xi_k - t)(\hat{p}_{n-s}-t) \le 0.
\end{align}
However, since $\min\{\hat{p}_{n-k},\hat{p}_{n-s}\} \le\xi_k\le \max\{\hat{p}_{n-k},\hat{p}_{n-s}\}$, the previous inequality implies that the ``path" consisting of $\hat{p}_{n-s},\cdots,\hat{p}_n$ under our coupling ``walks across" $t$. Let's call it ``good path". Moreover, note that
\begin{align}
|\hat{p}_{n-k} - \hat{p}_{n-s}| \le \frac{s}{n-s}, \qquad k=0,\cdots,s
\end{align}
under our coupling, for a good path we must have
\begin{align}
|\hat{p}_{n-s} - t| \le \frac{s}{n-s}.
\end{align}
Since $\hat{p}_{n-s}$ must be an integral multiple of $\frac{1}{n-s}$, we conclude that the number of good paths is $O(1)$. Let's call $\hat{p}_{n-k}\to \hat{p}_{n-k+1}$ a ``right step" if $\hat{p}_{n-k}<\hat{p}_{n-k+1}$ or $\hat{p}_{n-k}=\hat{p}_{n-k+1}=1$, and a ``left step" if $\hat{p}_{n-k}>\hat{p}_{n-k+1}$ or $\hat{p}_{n-k}=\hat{p}_{n-k+1}=0$. 

First we consider the case where $p\le \frac{1}{2}$, and consider any good path $L$. The idea is that, each good path $L$ gives rise to a realization of $B_u$, and $\EE_pB_u$ is the expectation averages over all good paths. Hence, to evaluate $\EE_p B_u$, it suffices to compute the value of $B_u$ given $L$, and the probability of the path $L$. Denote by $r,l$ the number of right and left steps in $L$, and by $q$ the starting point of $L$, it is easy to see that the probability of this path is
\begin{align}
\mathbb{P}(L) = p^r(1-p)^l\cdot \mathbb{P}(\mathsf{B}(n-s,p)=(n-s)q).
\end{align}

We first take a look at the quantity $\mathbb{P}(\mathsf{B}(n,p)=nq)$ for any $q$ with $|q-t|=O(n^{-1})$. By Stirling's approximation
\begin{align}
n! = \sqrt{2\pi n}\left(\frac{n}{e}\right)^n(1+o(1)),
\end{align}
we have
\begin{align}
&\mathbb{P}(\mathsf{B}(n,p)=nq) \nonumber \\ 
= &  \binom{n}{nq}p^{nq}(1-p)^{n-nq}\\
\lesssim &\frac{\sqrt{2\pi n}(\frac{n}{e})^n}{\sqrt{2\pi nq}(\frac{nq}{e})^{nq}\cdot \sqrt{2\pi (n-nq)}(\frac{n-nq}{e})^{n-nq}}\nonumber \\ 
& \cdot p^{nq}(1-p)^{n-nq}\\
\lesssim& \frac{1}{\sqrt{nq(1-q)}}\cdot e^{-nD(q\|p)}\\
\le &\frac{1}{\sqrt{nq(1-q)}}\cdot \exp(-\frac{n(q-p)_+^2}{2q})
\end{align}
where the last step is given by \eqref{eq.chernoff}. Moreover, for $q>1-\frac{1}{n}$, \eqref{eq.chernoff} gives a better bound
\begin{align}
\mathbb{P}(\mathsf{B}(n,p)=nq) \le \exp(-\frac{n(q-p)^2}{2q}).
\end{align}
Combining them together, we conclude that
\begin{align}
\mathbb{P}(\mathsf{B}(n,p)=nq) \lesssim \frac{1}{\sqrt{nq}}\exp(-\frac{n(q-p)_+^2}{4q})
\end{align}

Now we show that if $q=t+O(n^{-1})$, we can replace $q$ by $t$ without loss in the previous inequality. In fact, if $nq\ge 1$, it is easy to verify that
\begin{align}
\mathbb{P}(\mathsf{B}(n,p)=nq) & \lesssim \frac{1}{\sqrt{nq}}\exp(-\frac{n(q-p)_+^2}{4q}) \\
& \lesssim (\frac{1}{\sqrt{nt}}\wedge 1)\exp(-\frac{n(t-p)^2}{4t})
\end{align}
and if $nq<1$, we use the trivial bound
\begin{align}
\mathbb{P}(\mathsf{B}(n,p)=nq) \le 1 \lesssim (\frac{1}{\sqrt{nt}}\wedge 1)\exp(-\frac{n(t-p)^2}{4t}).
\end{align}
As a result, for all good paths with starting point $q$ we conclude that
\begin{align}
\mathbb{P}(\mathsf{B}(n,p)=nq) &\lesssim (\frac{1}{\sqrt{nt}}\wedge 1)\exp(-\frac{n(t-p)^2}{4t})
\end{align}

Now we evaluate the quantity $B_u(L)$ given $L$. In fact, it is easy to see
\begin{align}
|B_u(L)| \lesssim_u \begin{cases}
n^{-u} & \text{if }r>0\\
(\frac{p}{n})^u & \text{if }r=0.
\end{cases}
\end{align}
As a result,
\begin{align}
& |B_u(L)| \cdot \mathbb{P}(L) \nonumber \\
&\quad \lesssim_u \left(n^{-u}\cdot p + (\frac{p}{n})^u\right)\cdot \mathbb{P}(\mathsf{B}(n,p)=nq)\\
& \quad \lesssim_u n^{-u}p^{u\wedge 1}\cdot (\frac{1}{\sqrt{nt}}\wedge 1)\exp(-\frac{c_2n(t-p)^2}{t})
\end{align}
Finally, there are only $O(1)$ good paths, and for $p\le\frac{1}{2}$ we arrive at
\begin{align}
& |\EE_p B_u| \nonumber \\
&\quad  \le \sum_{L}|B_u(L)| \cdot \mathbb{P}(L) \nonumber \\
&\quad \lesssim_u n^{-u}p^{u\wedge 1}\cdot (\frac{1}{\sqrt{nt}}\wedge 1)\exp(-\frac{c_2n(t-p)^2}{t}).
\end{align}

The previous approach can also be applied to the case where $\frac{1}{2}<p\le 1-\frac{1}{n}$, and we conclude that if $p\le \frac{1}{2}$,
\begin{align}\label{eq.remainder}
    |\EE_p B_u| \lesssim_u 
n^{-u}p^{u\wedge 1}\cdot (\frac{1}{\sqrt{nt}}\wedge 1)\exp(-\frac{c_2n(t-p)^2}{t});
\end{align}
if $\frac{1}{2}<p\le 1-\frac{1}{n}$, 
\begin{align}
    |\EE_p B_u| \lesssim_u & n^{-u}(1-p)^{u\wedge 1}\cdot (\frac{1}{\sqrt{n(1-t)}}\wedge 1)\nonumber \\ 
    & \cdot \exp(-\frac{c_2n(t-p)^2}{1-p})
\end{align}

Finally, when $t>1-\frac{1}{n}$, it's easy to see
\begin{align}
A_{n,u}(t) = (1-t)^u\cdot \mathbb{P}(\hat{p}_n=1) = (1-t)^up^n
\end{align}
and
\begin{align}\label{eq.large_t}
|\Delta^s A_{n,u}(t)| & = (1-t)^u|\Delta^s p^n|\nonumber \\ 
&\lesssim_{u,s}  (1-t)^up^{n-s}(1-p)^s.
\end{align}
Now the combination of \eqref{eq.non-remainder}, \eqref{eq.remainder} and \eqref{eq.large_t} completes the proof of Lemma \ref{lemma.A_nr}. \qed

\subsection{Proof of Lemma~\ref{lem.A_nr_converse}}

First note that by assumption we have
\begin{align}
\frac{k+1}{n-j} - t = \frac{1+O(p)}{n-j}
\end{align}
for any $j=0,\cdots,s$ in both cases, and when $u<s$ we have
\begin{align}
\frac{k}{n-s} - t \gtrsim \frac{p}{n}.
\end{align}
	
Adopt the same coupling as in the proof of Lemma \ref{lemma.A_nr}, and write
\begin{align}
& |\Delta^s A_{n,u}(t)| \nonumber \\
&\quad = |\EE_p \Delta^s (\hat{p}_n-t)_+^u(\mathbbm{1}(\hat{p}_{n-s}\in A) +\mathbbm{1}(\hat{p}_{n-s}\notin A)|\\
&\quad \ge |\EE_p \Delta^s (\hat{p}_n-t)_+^u\mathbbm{1}(\hat{p}_{n-s}\in A)| \nonumber \\ 
& \qquad -|\EE_p \Delta^s (\hat{p}_n-t)_+^u\mathbbm{1}(\hat{p}_{n-s}\notin A)|.
\end{align}
where
\begin{align}
A \triangleq \left[\frac{k-s}{n-s},\frac{k}{n-s}\right].
\end{align}
By the proof of Lemma \ref{lemma.A_nr}, the non-remainder terms in the Taylor expansion is at most $O(n^{-(\frac{u}{2}+s)})=o(n^{-(u+\frac{1}{2})})$, and by our coupling the remainder term is non-zero only if $\hat{p}_{n-s}\in A$, we conclude that
\begin{align}
|\EE_p \Delta^s (\hat{p}_n-t)_+^u\mathbbm{1}(\hat{p}_{n-s}\notin A)| = o(n^{-(u+\frac{1}{2})}).
\end{align}

Now we deal with the first term. It's easy to verify that for any $x\in A$, we have
\begin{align}
|x-p| = |t-p| + O(\frac{1}{n}) \le \frac{1}{\sqrt{n}}+O(\frac{1}{n}).
\end{align}
As a result, by the Stirling approximation formula in the proof of Lemma \ref{lemma.A_nr}, we conclude that for any $\frac{m}{n-s}\in A$ with $m\in\mathbb{N}$, 
\begin{align}
q&\triangleq\mathbb{P}(\hat{p}_{n-s} =\frac{k}{n-s}) \nonumber \\ 
& = (1+o(1))\cdot\mathbb{P}(\hat{p}_{n-s}=\frac{m}{n-s}) \gtrsim \frac{1}{\sqrt{n}}.
\end{align}
In other words, $\hat{p}_{n-s}$ is almost uniformly distributed restricted to $A$ with mass at least $\Theta(\frac{1}{\sqrt{n}})$ on each point. Moreover, for $j=0,1,\cdots,s$ and $i=1,\cdots,s-j$, it follows from the coupling that
\begin{align}
&\mathbb{P}(\hat{p}_{n-j}=\frac{k+i}{n-j},\hat{p}_{n-s}\in A) \nonumber\\
&=\sum_{m=0}^s \mathbb{P}(\hat{p}_{n-j}=\frac{k+i}{n-j}|\hat{p}_{n-s}=\frac{k-m}{n-s})\nonumber \\ 
& \quad \cdot\mathbb{P}(\hat{p}_{n-s}=\frac{k-m}{n-s})\\
&= \sum_{m=0}^s \binom{s-j}{m+i}p^{m+i}(1-p)^{s-j-m-i} \cdot q(1+o(1))
\end{align}

As a result, when $u<s$, we have
\begin{align}
&|\EE_p \Delta^s (\hat{p}_n-t)_+^u\mathbbm{1}(\hat{p}_{n-s}\in A)|\nonumber\\
= & \left|\EE_p \sum_{j=0}^s(-1)^j\binom{s}{j}(\hat{p}_{n-j}-t)_+^u\mathbbm{1}(\hat{p}_{n-s}\in A)\right|\\
= &\Bigg|(-1)^s(\frac{p}{n})^u+ \sum_{i=1}^{s-j} \sum_{j=0}^s(-1)^j\binom{s}{j}(\frac{k+i}{n-j}-t)_+^u\nonumber \\ & \cdot \mathbb{P}(\hat{p}_{n-j}=\frac{k+i}{n-j},\hat{p}_{n-s}\in A)\Bigg|\\
= &q(1+o(1))\Big|(-1)^s(\frac{p}{n})^u  + \sum_{i=1}^{s-j} \sum_{j=0}^s\sum_{m=0}^{s-i-j}(-1)^j\nonumber \\ 
& \cdot \binom{s}{j}\binom{s-j}{m+i} p^{m+i}(1-p)^{s-j-m-i}(\frac{k+i}{n-j}-t)^u\Big|\\
=& q(1+o(1))\left|(-1)^s(\frac{p}{n})^u+\sum_{i=1}^{s}\sum_{m=0}^{s-i} \binom{s}{m+i}p^{m+i}\right.\nonumber\\
&\cdot\sum_{j=0}^{s-i-m}(-1)^j\binom{s-m-i}{j}(1-p)^{s-m-i-j}\nonumber \\ 
& \left.\cdot (\frac{k+i}{n-j}-t)^u\right|\\
= &q(1+o(1))\left|(-1)^s(\frac{p}{n})^u+\sum_{i=1}^{s}\sum_{m=0}^{s-i} \binom{s}{m+i}\right.\nonumber\\
& \cdot p^{m+i}(i+O(p))^u\sum_{j=0}^{s-i-m}(-1)^j\binom{s-m-i}{j}\nonumber \\ 
& \left. \cdot (1-p)^{s-m-i-j}(\frac{1}{n-j})^u\right|\\
= & q(1+o(1))\left|(-1)^s(\frac{p}{n})^u+\sum_{i=1}^{s}\sum_{m=0}^{s-i} \binom{s}{m+i} \right. \nonumber \\ 
& \left. \cdot p^{m+i}(i+O(p))^u \Delta^{s-i-m}\frac{(1-p)^{s-m-i-x}}{(n-x)^u} \right|.
\end{align}
By the product rule of differentiation we know that
\begin{align}
& \Delta^{s-i-m}\frac{(1-p)^{s-m-i-x}}{(n-x)^u} \nonumber \\ 
=&  \frac{(-p)^{s-i-m}(1+O(p))}{n^{u}}\cdot (1+o(1)).
\end{align}
Plugging into the previous expression, we arrive at
\begin{align}
&|\EE_p \Delta^s (\hat{p}_n-t)_+^u\mathbbm{1}(\hat{p}_{n-s}\in A)|\nonumber\\
= &\frac{q(1+o(1))(1+O(p))}{n^u}\Big|(-1)^s\Theta(p^u) \nonumber \\ 
& +\sum_{i=1}^{s}\sum_{m=0}^{s-i} \binom{s}{m+i}p^{m+i}i^u (-p)^{s-i-m} \Big|\\
= &\frac{q(1+o(1))(1+O(p))}{n^u}\Big|(-1)^s\Theta(p^u) \nonumber \\ 
& +p^s\sum_{i=1}^{s}\sum_{m=0}^{s-i} \binom{s}{m+i}(-1)^{m+i}i^u \Big|\\
\gtrsim & \frac{q}{n^u}\cdot p^u(1+O(p))\\
\asymp & n^{-(u+\frac{1}{2})}\cdot p^u(1+O(p))
\end{align}
where we have used $u<s$. Hence, in this case by choosing $p_0$ small enough we arrive at the desired result.

When $u\ge s$, similarly we have
\begin{align}
&|\EE_p \Delta^s (\hat{p}_n-t)_+^u\mathbbm{1}(\hat{p}_{n-s}\in A)|\nonumber\\
=& \frac{q(1+o(1))(1+O(p))}{n^u}\Big|p^s\sum_{i=1}^{s}\sum_{m=0}^{s-i} \binom{s}{m+i}\nonumber \\ & \cdot (-1)^{m+i}i^u \Big|\\
\asymp & \frac{p^s(1+O(p))}{n^{u+\frac{1}{2}}}\cdot \left|\sum_{r=0}^s (-1)^r\binom{s}{r}\sum_{i=1}^r i^u\right|\\
= & \frac{p^s(1+O(p))}{n^{u+\frac{1}{2}}}\cdot \left|\Delta^s\sum_{i=1}^r i^u\right|.
\end{align}
Since $\sum_{i=1}^r i^u$ is a polynomial of $r$ with degree $u+1>s$, its $s$-backward difference is not zero, and thus the desired result also follows by choosing $p_0$ small enough.

\subsection{Proof of Lemma~\ref{lemma.jackknifelowerboundspeicalfunctions}}

We first prove the lower bound for the delete-$1$ jackknife. It suffices to fix $p = \frac{c}{n}$, where $c>0$ is a positive constant, and prove
\begin{align}
\lim_{n\to \infty} n \left ( \mathbb{E}_p \hat{f}_2 - f(p) \right ) \neq 0
\end{align} 
when $f(p) = -p\ln p$, and
\begin{align}
\lim_{n\to \infty} n^\alpha \left ( \mathbb{E}_p \hat{f}_2 - f(p) \right ) \neq 0
\end{align}
for $f(p) = p^\alpha, 0<\alpha<1$. 

It follows from Lemma~\ref{lemma.2jackkniferepres} that it suffices to analyze the expectation of the following quantity:
\begin{align}
g_n(X) = & n f\left( \frac{X}{n} \right) - \frac{n-1}{n} \Bigg( (n-X) f\left( \frac{X}{n-1}\right) \nonumber \\ 
& + X f\left( \frac{X-1}{n-1} \right) \Bigg) - f\left(\frac{c}{n} \right). 
\end{align}
As $n\to \infty$, the measure $\mu_n = \mathsf{B}(n,p)$ converges weakly to $\mu = \mathsf{Poi}(c)$. 

We will show that $\lim_{n\to \infty} n \mathbb{E}_{\mu_n} g_n(X) \neq 0$ when $f(p) = -p\ln p$ and $\lim_{n\to \infty} n^\alpha \mathbb{E}_{\mu_n} g_n(X) \neq 0$ when $f(p) = p^\alpha$.

For $f(p) = -p\ln p$, 
\begin{align}
& \mathbb{E}_{\mu_n} n \cdot g_n(X) \nonumber \\
 = & \mathbb{E}_{\mu_n} n \cdot \Big( n \frac{X}{n} \ln \frac{n}{X} - \frac{n-1}{n} \frac{(n-X)X}{n-1} \ln \frac{n-1}{X} \nonumber \\ 
 & - \frac{n-1}{n}\frac{X(X-1)}{n-1}\ln \frac{n-1}{X-1} - \frac{c}{n} \ln \frac{n}{c} \Big) \\
 = &  \mathbb{E}_{\mu_n} n \cdot \Big( X \ln \frac{n}{X} - \frac{X(n-X)}{n}\ln \frac{n-1}{X} \nonumber \\ & - \frac{X(X-1)}{n}\ln \frac{n-1}{X-1} - \frac{c}{n}\ln \frac{n}{c} \Big) \\
= & \mathbb{E}_{\mu_n} n \cdot \Big( X\ln n - \frac{X(n-X)}{n}\ln(n-1) \nonumber \\ & - \frac{X(X-1)}{n}\ln(n-1) - \frac{c}{n}\ln \frac{n}{c} \Big) \\
& + \mathbb{E}_{\mu_n} n\cdot \Big( X \ln \frac{1}{X} - \frac{X(n-X)}{n}\ln \frac{1}{X} \nonumber \\ 
&  - \frac{X(X-1)}{n}\ln \frac{1}{X-1} \Big) \\
= & \left( (c-cn)\ln \left(1-\frac{1}{n}\right) + c\ln c \right)  \nonumber \\ 
& +\mathbb{E}_{\mu_n} \left( X^2 \ln \frac{1}{X} - X(X-1)\ln \frac{1}{X-1} \right) \\
 = &  A_n + B_n. 
\end{align}
Since $\lim_{n\to \infty} A_n  = c + c \ln c$, $\lim_{c\to 0^+} c^{-2}(c + c\ln c) = \infty$, and $c^{-2} \mathbb{E}_\mu \left( X^2 \ln \frac{1}{X} - X(X-1)\ln \frac{1}{X-1} \right)$ has a finite limit at $c = 0^+$, we know there exists some $c>0$ such that 
\begin{align}
\lim_{n\to \infty} \mathbb{E}_{\mu_n} n \cdot g_n(X) \neq 0. 
\end{align}

For $f(p) = p^\alpha, 0<\alpha<1$, we have
\begin{align}
& \mathbb{E}_{\mu_n} n^\alpha \cdot g_n(X) \nonumber \\
 = & n^\alpha \Big( n \left(\frac{X}{n} \right)^\alpha - \frac{n-1}{n} (n-X) \left( \frac{X}{n-1} \right)^\alpha \nonumber \\ 
&  - \frac{n-1}{n}X \left(\frac{X-1}{n-1} \right)^\alpha - \left(\frac{c}{n} \right)^\alpha \Big) \\
 = &\mathbb{E}_{\mu_n} \Big( n X^\alpha - \left( \frac{n-1}{n} \right)^\alpha \left( X^\alpha(n-X) + X(X-1)^\alpha\right) \nonumber \\ 
 & - c^\alpha \Big) \\
 = & \mathbb{E}_{\mu_n} \left( n X^\alpha \left( 1- \left(1-\frac{1}{n} \right)^\alpha \right) - c^\alpha \right) \nonumber\\ 
 & + \left( \frac{n-1}{n} \right)^{1-\alpha}  \mathbb{E}_{\mu_n} \left( X^{\alpha+1}-X(X-1)^\alpha\right).
\end{align}
Hence, 
\begin{align}
\lim_{n\to \infty} \mathbb{E}_{\mu_n} n^\alpha \cdot g_n(X) & = \mathbb{E}_\mu \left( \alpha X^\alpha + X^{\alpha+1} - X(X-1)^\alpha \right) \nonumber \\ & \quad - c^\alpha. 
\end{align}
Noting that $c^{-1}\mathbb{E}_\mu \left( \alpha X^\alpha + X^{\alpha+1} - X(X-1)^\alpha \right)$ has a finite limit at $c = 0$ but $c^{\alpha-1}$ does not, we know there exists some $c>0$ such that 
\begin{align}
\lim_{n\to \infty} \mathbb{E}_{\mu_n} n^\alpha \cdot g_n(X) \neq 0
\end{align}
when $f(p) = p^\alpha,0<\alpha<1$.

For a $2$-jackknife that satisfies Condition~\ref{condi.boundedcoeffcondition}, the lower bound is given by Theorem~\ref{thm.jackknife} and~(\ref{eqn.dtmoduluscomputationexample}). 

\subsection{Proof of Theorem~\ref{thm.bootstrapafew}}

The first three statements follow from~\cite{gonska1994approximation}. The last statement follows from~\cite{Totik1994}. Now we work to prove the fourth statement. 

It follows from~\cite[Inequality 9.3.5., 9.3.7.]{Ditzian--Totik1987} that for $f\in C[0,1]$, we have $\| \varphi^{2m} (B_k[f])^{(2m)} \| \leq C k^{m} \| f \| $, and for $f$ such that $f^{(2m)} \in C[0,1]$, we have $\| \varphi^{2m} (B_k[f])^{(2m)} \| \leq C \| \varphi^{2m} f^{(2m)} \|$. Here $\varphi(x) = \sqrt{x(1-x)}$. 

It follows from the definition of the $K$-functional $K_{2m,\varphi}(f, 1/n^{m})$ that 
\begin{align}
K_{2m,\varphi}(f, 1/n^{m}) \leq & \| f - \oplus^{m} B_k[f] \| \nonumber \\ 
& + n^{-m} \| \varphi^{2m} (\oplus^{m} B_k[f])^{(2m)} \|  
\end{align}

Noting that $\oplus^{m}B_k[f]$ is nothing but linear combinations of the powers of $B_k$ up to degree $m$, we can apply the estimates on $\| \varphi^{2m} (B_k[f])^{(2m)} \|$ to $\| \varphi^{2m} (\oplus^{m} B_k[f])^{(2m)} \|  $. For any $g \in \text{A.C.}_{\text{loc}}$, we have
\begin{align}
& \| \varphi^{2m} (\oplus^{m} B_k[f])^{(2m)} \| \nonumber \\
 \leq& \| \varphi^{2m} (\oplus^{m} B_k[f-g])^{(2m)} \| + \| \varphi^{2m} (\oplus^{m} B_k[g])^{(2m)} \| \\
 \leq& C k^{m} \| f -g\| + C  \| \varphi^{2m} g^{(2m)} \|. 
\end{align}

Hence, 
\begin{align}
& n^{-m} \| \varphi^{2m} (\oplus^{m} B_k[f])^{(2m)} \| \nonumber \\
 \leq &C \inf_g \left( \frac{k}{n}\right)^{m} \left( \| f -g\|  + k^{-m} \| \varphi^{2m} g^{(2m)} \| \right) \\
 = & C \left( \frac{k}{n} \right)^{m} K_{2m,\varphi}(f, k^{-m}). 
\end{align}

We have showed that
\begin{align}\label{eqn.generalconverse}
K_{2m,\varphi}(f, 1/n^m) \leq & \| f - \oplus^{m} B_k[f] \|  \nonumber \\ 
& + C \left( \frac{k}{n} \right)^{m} K_{2m,\varphi}(f, k^{-m}). 
\end{align}

Now we utilize the assumption that $\omega_\varphi^{2m}(f,2t) \leq D \omega_\varphi^{2m}(f,t)$. We have
\begin{align}
D^{-q} \omega_\varphi^{2m} (f, 1/\sqrt{k}) & \leq \omega_\varphi^{2m}(f, 2^{-q} k^{-1/2}) \\
& = \omega_\varphi^{2m}(f, (2^{2q} k)^{-1/2}) \\
& \leq M_1 K_{2m, \varphi}(f, 1/ (2^{2q(m)} k^{m} )) ,
\end{align}
where in the last step we have used the equivalence between $K$-functional and the Ditzian--Totik modulus. 

Setting $n = 2^{2q}k$ and applying (\ref{eqn.generalconverse}), we have
\begin{align}
D^{-q} \omega_\varphi^{2m} (f, 1/\sqrt{k}) \leq  & M_1 \| f - \oplus^{m} B_k[f] \|\nonumber \\ 
& + M_1 2^{-2qm} \omega_\varphi^{2m}(f, 1/\sqrt{k}). 
\end{align}

We now choose $q$ so that $D^{-q} > 2M_1 2^{-2qm}$, which is possible since $D^{-1} > 2^{-2m}$, and obtain 
\begin{align}
\| f - \oplus^{m} B_k[f] \|  \geq 2^{-2qm} \omega_\varphi^{2m}(f, 1/\sqrt{k}). 
\end{align}
The proof is complete. 

\subsection{Proof of Lemma~\ref{lemma.taylorbiascorrectionbasic}}

It follows from~\cite[Theorem 1]{hurt1976asymptotic} that 
\begin{align}
\left | \mathbb{E}_p f(\hat{p}_n) - f(p) - \sum_{j = 1}^{2k-1} \frac{f^{(j)}(p)}{j!} \mathbb{E}_p \left( \hat{p}_n - p \right)^j \right | & \lesssim_{k,L} \frac{1}{n^k}.
\end{align}
It follows from Lemma~\ref{lemma.binomialmoments} and~\ref{lemma.centralmomentcoeffbounds} that for any $j\geq 1$ integer there exist polynomials $h_{m,j}(p)$ with degree no more than $j$, and coefficients independent of $n$ such that
\begin{align}
\mathbb{E}_p (\hat{p}_n - p)^j & = \sum_{m = 1}^{\lfloor j/2 \rfloor} h_{m,j}(p) \frac{1}{n^{j-m}}. 
\end{align}
Define
\begin{align}
T_j[f](p) & = \sum_{i = 1}^{2k-1} \frac{f^{(i)}(p)}{i!} h_{i-j,i}(p)\nonumber \\ & \quad \cdot \mathbbm{1}\left( i - \lfloor i/2 \rfloor \leq j \leq i-1 \right).
\end{align}
It is clear that $T_j[f](p)$ depends on $f$ only through the derivatives of $f$ of order from $j+1$ to $2j$. Concretely, $T_j[f](p)$ is a linear combination of the derivatives of $f$ of order from $j+1$ to $2j$ where the combination coefficients are polynomials of $p$ with degree no more than $2j$.

\subsection{Proof of Theorem~\ref{thm.withersbiascorrectionperformance}}
\begin{proof}
We first show, through induction, that each $t_i, 0\leq i\leq k-1$ satisfies condition $D_{2(k-i)}$ with parameter $\lesssim_{k} L$. Indeed, it is true for $i = 0$. Assuming that it is true for $i = m, 1\leq m\leq k-2$, we now show that it is true for $i = m+1$. We have
\begin{align}
t_{m+1}(p) = - \sum_{j =1}^{m+1} T_j[t_{m+1-j}](p). 
\end{align} 
Since $T_j[f]$ involves the derivatives of $f$ up to order $2j$ (Lemma~\ref{lemma.taylorbiascorrectionbasic}), for each $j$-th term $T_j[t_{m+1-j}](p)$, $1\leq j\leq m+1$, it involves the derivatives of $f$ up to order
\begin{align}
2(m+1-j) + 2j & = 2(m+1),
\end{align}
which implies that $t_{m+1}$ satisfies the condition $D_{2k - 2(m+1)}$ with parameter $\lesssim_{k} L$.

Now we apply Lemma~\ref{lemma.taylorbiascorrectionbasic} to each term in the formula of $\hat{f}_k$. For the $i$-th term, $0\leq i\leq k-1$, we have
\begin{align}
&\mathbb{E}_p \left[ \frac{1}{n^i} t_i(\hat{p}_n) \right] \nonumber \\= & \frac{1}{n^i} t_i(p) + \sum_{j = 1}^{k-i-1} \frac{1}{n^{j+i}} T_j[t_i](p) + O \left( \frac{1}{n^k} \right) \\
 = &\frac{1}{n^i} t_i(p) + \sum_{m = i+1}^{k-1} \frac{1}{n^{m}} T_{m-i}[t_i](p) + O \left( \frac{1}{n^k} \right).
\end{align}
Sum over $0\leq i\leq k-1$, we have
\begin{align}
\mathbb{E}_p[\hat{f}_k] & = \sum_{i = 0}^{k-1} \left( \frac{1}{n^i} t_i(p) + \sum_{m = i+1}^{k-1} \frac{1}{n^{m}} T_{m-i}[t_i](p) \right) \nonumber \\& \quad + O \left( \frac{1}{n^k} \right).
\end{align}
It suffices to show that
\begin{align}
\sum_{i = 0}^{k-1} \left( \frac{1}{n^i} t_i(p) + \sum_{m = i+1}^{k-1} \frac{1}{n^{m}} T_{m-i}[t_i](p) \right) & = f(p). 
\end{align}
We have
\begin{align}
\sum_{i = 0}^{k-1} \left( \frac{1}{n^i} t_i(p) \right) & = f(p) + \sum_{i = 1}^{k-1} \frac{1}{n^i} t_i(p),
\end{align}
and 
\begin{align}
\sum_{i = 0}^{k-1} \sum_{m = i+1}^{k-1} \frac{1}{n^{m}} T_{m-i}[t_i](p) & = \sum_{m = 1}^{k-1} \frac{1}{n^m} \sum_{i = 0}^{m-1} T_{m-i}[t_i](p) \\
& =  -\sum_{m = 1}^{k-1} \frac{1}{n^m} t_m(p),
\end{align}
where in the last step we used the definition of $t_i(p)$. The proof is now complete. 
\end{proof}

\section{Proofs of auxiliary lemmas}\label{sec.proofofauxlemmas}

\subsection{Proof of Lemma~\ref{lemma.meanvaluetheorem}}

Let $p(x)$ be the Lagrangian interpolating polynomial of $f$ at points $x_0,\cdots,x_r$, and define $g(x)=f(x)-p(x)$. It is easy to see that $g(x)$ has at least $(r+1)$ zeros $x_0,\cdots,x_r$. Since
\begin{align}
0 = g(x_1) - g(x_0) = \sum_{x=x_0+1}^{x_1} \Delta g(x)
\end{align}
there must be some $y_0\in (x_0,x_1]$ such that $\Delta g(y_0)\le 0$. Similarly, there exists some $y_1\in (x_1,x_2]$ such that $\Delta g(y_1)\ge 0$, and $y_2\in (x_2,x_3]$ such that $\Delta g(y_2)\le 0$, so on and so forth. Next observe that
\begin{align}
0 \le \Delta g(y_1) - \Delta g(y_0) = \sum_{y=y_0+1}^{y_1} \Delta^2g(y)
\end{align}
there must exist some $z_0\in (y_0,y_1]$ such that $\Delta^2g(z_0)\ge 0$. Similarly, there exists some $z_1\in (y_1,y_2]$ such that $\Delta^2g(z_1)\le 0$, and $z_2\in (y_2,y_3]$ such that $\Delta^2g(z_2)\ge 0$, so on and so forth. Repeating this process, there must be some $x\in [x_0,x_r]$ such that $(-1)^r\Delta^rg(x)\ge 0$. Note that $p(x)$ is a degree-$r$ polynomial with leading coefficient $f[x_0,\cdots,x_r]$, we conclude that
\begin{align}
0&\le (-1)^r\Delta^rg(x)\\
& = (-1)^r\Delta^r(f(x)-p(x))\\
& = (-1)^r(\Delta^rf(x)-r!f[x_0,\cdots,x_r]).
\end{align}

Similarly, we can also show that there exists $x'\in [x_0,x_r]$ such that
\begin{align}
0\le (-1)^{r+1}(\Delta^rf(x')-r!f[x_0,\cdots,x_r]).
\end{align}
Combining these two inequalities completes the proof.

\subsection{Proof of Lemma~\ref{lemma.higherpower}}

The key idea is to connect the problem of solving the matrix equations~(\ref{eqn.vonderm}) to the notion of divided difference in approximation theory. 

For $\rho \geq 0, \rho \in \mathbb{Z}$, 
\begin{align}
\sum_{i = 1}^r \frac{C_i}{n_i^\rho} & = \sum_{i = 1}^r \frac{1}{n_i^\rho} \prod_{j\neq i} \frac{n_i}{n_i - n_j} \\
& = \sum_{i = 1}^r n_i^{r-1-\rho} \prod_{j\neq i} \frac{1}{n_i - n_j} \\
& = x^{r-1-\rho}[n_1,n_2,\ldots, n_r],
\end{align}
where $f[x_1,x_2,\ldots,x_r]$ denotes the divided difference in Definition~\ref{def.divideddifference}. The lemma is proved using the mean value theorem (Lemma~\ref{lemma.meanvaluetheoremdivideddifferrence}) of the divided difference for function $x^{r-1-\rho}$.

\subsection{Proof of Lemma~\ref{lemma.centralmomentcoeffbounds}}

The first part follows from Lemma~\ref{lemma.binomialmoments}. Regarding the second part, the moment generating function of $\hat{p}_n-p$ is given by
\begin{align}
\EE[\exp(z(\hat{p}_n-p))] = e^{-zp}\left(1+p(e^{z/n}-1)\right)^n.
\end{align}
Written as formal power series of $z$, the previous identity becomes
\begin{align}
& \sum_{s=0}^\infty \frac{\EE(\hat{p}_n-p)^s}{s!}z^s  \nonumber \\
& \quad = \left(\sum_{i=0}^\infty \frac{(-p)^i}{i!}z^i\right)\left[\sum_{k=0}^n \binom{n}{k}p^k\left(\sum_{l=1}^\infty \frac{1}{l!}(\frac{z}{n})^l\right)^{k}\right].
\end{align}

Hence, by comparing the coefficient of $n^{j-s}z^s$ at both sides, we obtain
\begin{align}
& \frac{h_{j,s}(p)}{s!} \nonumber \\
 = &  \sum_{i=0}^j \frac{(-p)^i}{i!} \left(\sum_{k=j-i}^{s-i} \frac{t_{k,j-i}}{k!}p^k\sum_{\substack{a_1+\cdots+a_k=s-i, \\ a_1,\cdots,a_k\ge 1}} \prod_{l=1}^k \frac{1}{a_l!}\right)
\end{align}
where $t_{k,r}$ is the coefficient of $x^r$ in $x(x-1)\cdots(x-k+1)$. It's easy to see
\begin{align}
|t_{k,r}| \le k^{k-r}\binom{k}{r} \le \frac{k^k}{r!}. 
\end{align}
Moreover, it's easy to see when $k\le s-i$, we have
\begin{align}
& \sum_{a_1+\cdots+a_k=s-i, a_1,\cdots,a_k\ge 1} \prod_{l=1}^k \frac{1}{a_l!} \nonumber \\
&\quad \le \sum_{a_1+\cdots+a_k=s-i} \prod_{l=1}^k \frac{1}{a_l!} \\
&\quad \le \sum_{a_1+\cdots+a_{k+i}=s} \prod_{l=1}^{k+i} \frac{1}{a_l!} = \frac{(k+i)^s}{s!}
\end{align}
and this quantity is zero when $k>s-i$. 

Then, applying $k! \geq \left( \frac{k}{e} \right)^k$ yields
\begin{align}
|h_{j,s}(p)| & \leq \sum_{i = 0}^j \frac{1}{i!} \sum_{k = j-i}^{s-i} \frac{e^k (k+i)^s}{(j-i)!} \\
& \leq \sum_{i = 0}^j \frac{s e^s s^s}{i!(j-i)!} \\
& = s(es)^s \frac{2^j}{j!} \\
& \leq \frac{s(2es)^s}{j!} \\
& \leq \frac{(4es)^s}{j!}
\end{align}

\subsection{Proof of Lemma~\ref{lemma.strukov}}

We first prove the statement when the domain of $f$ is the whole real line. We introduce the first and second Stekolv functions $f_h(x), f_{hh}(x)$ as follows:
\begin{align}
f_h(x) & = f \ast K_h \\
f_h(x) & = f_h \ast K_h = f \ast K_h \ast K_h,
\end{align}
where $K_h = \frac{1}{h} \mathbbm{1}(x\in [-h/2,h/2])$ is the box kernel, and the operation $\ast$ denotes convolution. 

The Steklov functions have the following nice properties~\cite[Chap. V, Sec. 83]{akhiezer1956theory}:
\begin{align}
f(x) - f_h(x) & = \frac{1}{h} \int_{-h/2}^{h/2} (f(x) - f(x+t)) dt \\
f'_h(x) & = \frac{1}{h} \left( f\left( x+ \frac{h}{2} \right) - f\left( x- \frac{h}{2} \right) \right) \\
f(x) - f_{hh}(x) & = \frac{1}{h^2} \int_0^{h/2} \int_0^{h/2} [ 4f(x) - f(x+s+t) \nonumber \\ 
& \quad - f(x+s-t) - f(x-s+t) \nonumber \\ 
& \quad - f(x-s-t)] ds dt \\
f''_{hh}(x) & = \frac{1}{h^2} \left( f(x+h) + f(x-h) -2f(x) \right). 
\end{align}

Hence, we have
\begin{align}
& |f(x) - f_{hh}(x)| \nonumber \\ 
\leq &   \frac{1}{h^2} \int_0^{h/2} \int_0^{h/2} [ | 2f(x)- f(x+s+t)   - f(x-s-t)|  \nonumber \\ 
& +|2f(x) - f(x+s-t) - f(x-s+t)|] ds dt \\ 
 \leq & \frac{1}{h^2} \int_0^{h/2} \int_0^{h/2} [ 2 \omega^2(f,h)] dsdt \\
= & \frac{1}{2} \omega^2(f,h),
\end{align}
and 
\begin{align}
\| f''_{hh}(x) \| & \leq \frac{1}{h^2} \omega^2(f,h). 
\end{align}

We use
\begin{align}
&| \mathbb{E}[f(X)] - f(\mathbb{E}[X])| \nonumber \\
 \leq &| \mathbb{E}[f(X) - f_{hh}(X) + f_{hh}(X) - f_{hh}(\mathbb{E}[X]) \nonumber \\ &\quad + f_{hh}(\mathbb{E}[X]) - f(\mathbb{E}[X])] | \\
\leq & 2 \| f - f_{hh}\| + | \mathbb{E}[f_{hh}(X)] - f_{hh}(\mathbb{E}[X]) | \\
\leq & 2 \| f- f_{hh} \| + \frac{1}{2} \| f''_{hh}\| \mathsf{Var}(X) \\
 \leq& \omega^2(f,h) + \frac{1}{2h^2} \omega^2(f,h) \mathsf{Var}(X) \\
  = & 3 \cdot \omega^2(f, h),
\end{align}
where $h = \frac{\sqrt{\mathsf{Var}(X)}}{2}$. 

Now we argue that when the domain of $f$ is an interval $[a,b]$ that is a strict subset of $\mathbb{R}$, we can replace the constant $3$ by $15$. Indeed, as argued in~\cite[Sec. 3.5.71, pg. 121]{timan2014theory}, for any continuous function $f\in [a,b]$, one can extend $\phi(x) = f(x) - \frac{f(b)-f(a)}{b-a}(x-a) - f(a)$ to the whole real line while ensuring the second order modulus of the extension is upper bounded by five times the $\omega^2(f,t)$ of the original function $f$. Indeed, one achieves this by extending $\phi$ so as to be odd with respect to the ends of the $[a,b]$ and then periodically with period $2(b-a)$ on the whole real line.

\bibliographystyle{IEEEtran}
\bibliography{di}

\begin{IEEEbiographynophoto}{Jiantao Jiao}
(S'13-M'18) is an Assistant Professor in the Department of Electrical Engineering and Computer Sciences at the University of California, Berkeley. He received his Ph.D. and M.S. degrees from Stanford University in 2014 and 2018, respectively, and he received the B.Eng degree with the highest honor in Electronic Engineering from Tsinghua University, Beijing, China, in 2012. His research interests are in statistical machine learning, mathematical data science, optimization, applied probability, information theory, and their applications in science and engineering.  
\end{IEEEbiographynophoto}

\begin{IEEEbiographynophoto}{Yanjun Han}
(S'14) received his B.Eng. degree with the highest honor in
Electronic Engineering from Tsinghua University, Beijing, China in 2015, and a Master's degree in Electrical Engineering from Stanford University in 2017. He is currently working towards the Ph.D. degree in the Department of
Electrical Engineering at Stanford University. His research interests include information theory and statistics, with applications in communications, data compression, and learning.
\end{IEEEbiographynophoto}

\end{document}